\newcommand{\preprint}[1]{#1}
\newcommand{\journal}[1]{}
\newcommand{\hide}[1]{}
\theoremstyle{plain}
\newtheorem{thm}{Theorem}[section]
\newtheorem{prop}[thm]{Proposition}
\newtheorem{claim}[thm]{Claim}
\newtheorem{cor}[thm]{Corollary}
\newtheorem{lem}[thm]{Lemma}
\newtheorem{assumption}[thm]{Assumption}
\theoremstyle{definition}
\newtheorem{defi}[thm]{Definition}
\theoremstyle{remark}
\newtheorem{example}[thm]{Example}
\newtheorem{question}[thm]{Question}
\newtheorem{rem}[thm]{Remark}
\newcommand{\Def}{{\mathcal D}ef}
\renewcommand{\AA}{{\mathbb A}}
\newcommand{\B}{{\mathcal B}}
\newcommand{\E}{{\mathcal E}}
\newcommand{\F}{{\mathcal F}}
\newcommand{\Hun}{\H}
\newcommand{\M}{{\mathcal M}}
\newcommand{\PP}{{\mathbb P}}
\newcommand{\UU}{{\mathbb U}}
\newcommand{\Q}{{\mathcal Q}}
\newcommand{\TT}{{\mathbb T}}
\newcommand{\X}{{\mathcal X}}
\newcommand{\Y}{{\mathcal Y}}
\newcommand{\Z}{{\mathcal Z}}
\newcommand{\RealNumbers}{{\mathbb R}}
\newcommand{\Integers}{{\mathbb Z}}
\newcommand{\ComplexNumbers}{{\mathbb C}}
\newcommand{\RationalNumbers}{{\mathbb Q}}
\newcommand{\IsomRightArrowOf}[1]{
\stackrel
{\stackrel{#1}{\cong}}
{\rightarrow}
}
\newcommand{\LongIsomRightArrow}{\stackrel{\cong}{\longrightarrow}}
\newcommand{\RightArrowOf}[1]{\stackrel{#1}{\rightarrow}}
\newcommand{\LongLeftArrowOf}[1]{\stackrel{#1}{\longleftarrow}}
\newcommand{\LongRightArrowOf}[1]{\stackrel{#1}{\longrightarrow}}
\newcommand{\LongIsomRightArrowOf}[1]{
\stackrel
{\stackrel{#1}{\cong}}
{\longrightarrow}
}
\newcommand{\StructureSheaf}[1]{{\mathcal O}_{#1}}
\newcommand{\EndProof}{\hfill  $\Box$}
\newcommand{\restricted}[2]{#1_{\mid_{#2}}}
\newcommand{\rank}{{\rm rank}}
\newcommand{\Pic}{{\rm Pic}}
\newcommand{\Sym}{{\rm Sym}}
\newcommand{\Ext}{{\rm Ext}}
\newcommand{\Hom}{{\rm Hom}}
\newcommand{\Aut}{{\rm Aut}}
\newcommand{\Abs}[1]{|\!#1\!|}
\newcommand{\DDef}{{\mathbb D}ef}
\begin{document}
\title[Modular Galois covers]
{Modular Galois covers associated to symplectic resolutions of singularities}
\author{Eyal Markman}
\address{Department of Mathematics and Statistics, 
University of Massachusetts, Amherst, MA 01003}
\email{markman@math.umass.edu}

\begin{abstract}
Let $Y$ be a normal projective variety and $\pi:X\rightarrow Y$
a projective holomorphic symplectic resolution. 
Namikawa proved that the Kuranishi deformation spaces
$Def(X)$ and $Def(Y)$ are both smooth, of the same dimension, 
and $\pi$ induces a finite branched cover $f:Def(X)\rightarrow Def(Y)$.
We prove that $f$ is Galois. 
We proceed to calculate the Galois group $G$, 
when $X$ is simply connected, and its holomorphic symplectic structure
is unique, up to a scalar factor.
The singularity of $Y$ is generically of $ADE$-type, 
along every codimension $2$ irreducible component $B$ of the singular locus, 
by Namikawa's work. The
modular Galois group $G$ is the product of Weyl groups of finite type, indexed
by such irreducible components $B$. 
Each Weyl group factor $W_B$ is that of a Dynkin diagram, 
obtained as a quotient of the Dynkin diagram of the singularity-type of $B$, 
by a group of Dynkin diagram automorphisms. 

Finally we consider generalizations of the above set-up, where $Y$
is affine symplectic, or a Calabi-Yau threefold with a curve of
$ADE$-singularities. We prove that $f:Def(X)\rightarrow Def(Y)$
is a Galois cover of its image.
This explains the
analogy between the above results and related work of Nakajima, on 
quiver varieties, and of Szendr\Hun{o}i on enhanced gauge symmetries for
Calabi-Yau threefolds.
\end{abstract}

\maketitle

\tableofcontents 
\section{Introduction}
An {\em irreducible holomorphic symplectic manifold} is a simply connected
compact K\"{a}hler manifold $X$, such that $H^0(X,\Omega^2_X)$ is generated
by an everywhere non-degenerate holomorphic two-form 
\cite{beauville,huybrects-basic-results}.
A projective {\em symplectic variety}
is a normal projective variety with rational Gorenstein singularities, 
which regular locus admits a non-degenerate holomorphic two-form.
Let $\pi:X\rightarrow Y$ be a symplectic resolution of a 
normal projective variety $Y$. Then $Y$ has rational Gorenstein singularities
and is thus a symplectic variety
\cite{beuville-symplectic-singularities}, Proposition 1.3.
Assume that $X$ is a projective irreducible holomorphic symplectic manifold.
Let $Def(X)$ and $Def(Y)$ be the Kuranishi spaces of $X$ and $Y$.
Denote by $\psi:\X\rightarrow Def(X)$ 
the semi-universal deformation of $X$, by $0\in Def(X)$ the point
with fiber $X$, and let $X_t$ be the fiber over $t\in Def(X)$. 
Let $\bar{\psi}:\Y\rightarrow Def(Y)$ 
be the semi-universal deformation of $Y$, $\bar{0}\in Def(Y)$
its special point with fiber $Y$, and 
$Y_t$ the fiber over $t\in Def(Y)$.
The following is a fundamental theorem of Namikawa:

\begin{thm}
\label{thm-namikawa}
(\cite{namikawa-deformations}, Theorem 2.2)
The Kuranishi spaces
$Def(X)$ and $Def(Y)$ are both smooth of the same dimension. 
They can be replaced by open neighborhoods of $0\in Def(X)$
and $\bar{0}\in Def(Y)$, and denoted again by $Def(X)$ and $Def(Y)$, 
in such a way that 
there exists a natural proper surjective map
$f:Def(X)\rightarrow Def(Y)$ with finite fibers.
Moreover, for a generic point $t\in Def(X)$, 
$Y_{f(t)}$ is isomorphic to $X_t$. 
\end{thm}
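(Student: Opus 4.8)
The plan is to prove the statement in three steps: smoothness of $Def(X)$, smoothness of $Def(Y)$ together with the equality of dimensions, and the construction of the finite map $f$. For the first step, since $X$ carries an everywhere non-degenerate holomorphic two-form $\sigma$ its canonical bundle is trivial, so the Bogomolov--Tian--Todorov theorem gives that $Def(X)$ is smooth of dimension $h^1(X,T_X)$; contraction with $\sigma$ identifies $T_X\cong\Omega^1_X$, whence $\dim Def(X)=h^1(X,\Omega^1_X)=h^{1,1}(X)$. After shrinking, the semi-universal family $\psi:\X\rightarrow Def(X)$ is smooth and proper, with Kodaira--Spencer map an isomorphism $T_0Def(X)\IsomRightArrow H^1(X,T_X)$.

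The second step is the core. As a projective symplectic variety, $Y$ has rational Gorenstein --- hence canonical --- singularities, so $\mathrm{Sing}(Y)$ has codimension $\geq 2$, and by Namikawa's structure results the transverse singularity is of $ADE$-type along the generic point of each codimension-two component $B$, with the deeper singularities in codimension $\geq 4$. I would study the tangent--obstruction groups through the local-to-global spectral sequence $E_2^{p,q}=H^p(Y,\T^q_Y)\Rightarrow T^{p+q}_Y$, where $\T^q_Y=\SheafExt^q_{\StructureSheaf{Y}}(\Omega_Y,\StructureSheaf{Y})$: the sheaf $\T^1_Y$ is supported on $\mathrm{Sing}(Y)$, and by the Brieskorn--Tyurina description of the semi-universal deformation of an $ADE$-singularity it restricts along the generic point of $B$ to a local system of rank equal to the rank of the transverse root system, twisted by the monodromy of the family of transverse singularities along $B$. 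Crepancy of $\pi$ gives $R^{>0}\pi_*\StructureSheaf{X}=0$ and, more generally, $\pi_*\Omega^p_X\cong\Omega^{[p]}_Y$ (the reflexive hull) with controlled higher direct images; combining this with the symplectic identification $\T_Y\cong\Omega^{[1]}_Y$ over $Y_{\mathrm{reg}}$, the Hodge theory of the compact K\"{a}hler manifold $X$ (in particular $h^{1,0}(X)=0$, $h^{2,0}(X)=1$), and the decomposition theorem for $\pi$, one matches the ``equisingular'' directions $H^1(Y,\Omega^{[1]}_Y)$ against the deformations of $X$ that keep the $\pi$-exceptional classes of Hodge type $(1,1)$, and the ``smoothing'' directions $H^0(Y,\T^1_Y)$ against the remaining part of $H^1(X,\Omega^1_X)$, to obtain $\dim T^1_Y=h^{1,1}(X)$. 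To upgrade this to actual \emph{smoothness} of $Def(Y)$ I would run a $T^1$-lifting argument in the style of Ran, Kawamata and Deligne: given an $n$-th order infinitesimal deformation of $Y$, lift the induced deformation of $X$ --- possible by the first step --- and push it forward along $\pi$, using $\pi_*\StructureSheaf{X}=\StructureSheaf{Y}$ and crepancy to verify that the pushforward is again a flat family of symplectic varieties; this produces the required $(n{+}1)$-st order lift. Hence $Def(Y)$ is smooth of dimension $h^{1,1}(X)=\dim Def(X)$.

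For the third step I would build the cover explicitly. Locally along each stratum $B$ (in the analytic category) Namikawa's slice theorem presents $Y$ as the product of the transverse $ADE$-singularity with a smooth factor; applying the Brieskorn--Tyurina simultaneous resolution slice-wise and patching, one obtains a finite surjective base change $g:\widetilde{Def(Y)}\rightarrow Def(Y)$, branched over the discriminant, with $\prod_B|W_B|$ sheets ($W_B$ the Weyl group of the Dynkin diagram obtained by folding the transverse type along the monodromy of $B$), such that the pulled-back family $\widetilde{\Y}\rightarrow\widetilde{Def(Y)}$ carries a simultaneous resolution $\widetilde{\X}\rightarrow\widetilde{\Y}$ whose central fibre is $\pi:X\rightarrow Y$. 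By the semi-universal property of $\psi$ the family $\widetilde{\X}\rightarrow\widetilde{Def(Y)}$ is classified by a holomorphic map $h:\widetilde{Def(Y)}\rightarrow Def(X)$. A computation at the special point --- in the $ADE$ slice model $g$ is the quotient $\mathfrak{h}\rightarrow\mathfrak{h}/W$ of a Cartan subalgebra by its Weyl group, and $h$ is the identity on the resolution side --- shows that the Kodaira--Spencer map of $\widetilde{\X}$ at the special point is an isomorphism, so $h$ is a local isomorphism there; since $\widetilde{Def(Y)}$ and $Def(X)$ are smooth of the same dimension $h^{1,1}(X)$, after shrinking $h$ is an isomorphism. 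Set $f:=g\circ h^{-1}$. Then
\[
\begin{CD}
\widetilde{Def(Y)} @>h>> Def(X)\\
@| @VVfV\\
\widetilde{Def(Y)} @>g>> Def(Y)
\end{CD}
\]
commutes, $f$ is proper, finite and surjective with fibres of cardinality $\prod_B|W_B|$, and $f(0)=\bar{0}$ so that $Y_{f(0)}=Y$; over a point $t\in Def(X)$ whose image $f(t)$ avoids the discriminant --- the generic case --- the simultaneous resolution restricts to an isomorphism $X_t\rightarrow Y_{f(t)}$, giving the last assertion.

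The main obstacle is the second step: the unobstructedness of $Def(Y)$ for symplectic varieties admitting a crepant resolution. The local deformation and simultaneous-resolution theory of $ADE$ singularities is classical (Artin, Brieskorn, Tyurina), but globalising it --- passing from transverse slices to honest semi-universal families of $Y$ and to a global simultaneous resolution, and matching the dimension count against $h^{1,1}(X)$ --- requires the decomposition theorem for $\pi$ and a careful treatment of reflexive differential forms on symplectic singularities. The $T^1$-lifting step, and within it the verification that the pushforward along $\pi$ of a deformation of $X$ is again a flat deformation of $Y$ by symplectic varieties, is precisely where the symplectic hypothesis is indispensable; one also needs the injectivity of the Kodaira--Spencer map of the simultaneous-resolution family so that $h$ is a local isomorphism, and this is where the comparison $\dim Def(X)=\dim Def(Y)$ is used.
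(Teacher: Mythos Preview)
This theorem is not proved in the present paper: it is quoted from Namikawa's paper \cite{namikawa-deformations}, Theorem~2.2, and used as a black box. So there is no ``paper's own proof'' to compare your proposal against here. What follows is a comment on your sketch as a reconstruction of Namikawa's argument.

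Your first two steps are broadly in the right spirit --- Bogomolov--Tian--Todorov for $X$, and a $T^1$-lifting argument for $Y$ using the crepant resolution $\pi$ --- and this is essentially how Namikawa proceeds. The third step, however, is not how the map $f$ arises, and it has a genuine gap. The morphism $f$ is \emph{not} built by globalising Brieskorn simultaneous resolutions and then recognising the base as $Def(X)$; rather, it is the classifying map obtained directly from the fact that the contraction $\pi:X\rightarrow Y$ deforms over $Def(X)$ (this is \cite{kollar-mori}, Proposition~11.4, used explicitly in Section~\ref{sec-proof} of the present paper to produce diagram~(\ref{diagram-f-general})). Finiteness of $f$ then follows from the uniqueness of crepant resolutions of $ADE$ singularities (this is the ``Claim~2'' of Namikawa's proof alluded to later in this paper, in the proof of Lemma~\ref{lemma-CY-threefolds}), and surjectivity from the equality of dimensions. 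Your approach --- patching local Brieskorn--Tyurina simultaneous resolutions over slices transverse to each $B$ --- is delicate in higher dimension: the local simultaneous resolution is not unique (there is a Weyl-group torsor of choices), and making these choices coherently across the dissident locus $\Sigma_0$ is exactly the kind of global problem one wants to avoid.

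There is also a circularity: you assert that the cover has $\prod_B|W_B|$ sheets with $W_B$ the Weyl group of the \emph{folded} diagram. That is precisely Theorem~\ref{thm-G-is-isomorphic-to-product-of-weyl-groups} of the present paper, whose proof occupies Section~\ref{sec-Galois-groups} and relies on Theorem~\ref{thm-namikawa} (and on Lemma~\ref{lemma-galois-cover}) as input. Namikawa's theorem itself says nothing about the degree of $f$ or that it is Galois; those are the contributions of this paper.
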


$Def(X)$ is thus a branched covering of $Def(Y)$. 
We first observe that the covering is Galois,
in analogy to the case of the resolution of 
a simple singularity of a $K3$ surface \cite{brieskorn,burns-wahl}.

\begin{lem}
\label{lemma-galois-cover}
\begin{enumerate}
\item
\label{item-galois}
The neighborhoods $Def(X)$ of $0$ and $Def(Y)$ of $\bar{0}$ in Theorem 
\ref{thm-namikawa} may be chosen, so that
the map $f:Def(X)\rightarrow Def(Y)$ is a branched Galois covering.
\item
\label{item-Hodge-isemetry}
The Galois group acts on $H^*(X,\Integers)$
via monodromy operators, which preserve the Hodge structure.
The action on $H^{1,1}(X,\RealNumbers)$ is faithful 
and the action on $H^{2,0}(X)$ is trivial.
\end{enumerate}
\end{lem}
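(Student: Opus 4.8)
\emph{Proof plan.} The plan is to first reduce part~(\ref{item-galois}) to a statement about monodromy. Write $\Delta\subset Def(Y)$ for the branch locus of $f$ and $D:=f^{-1}(\Delta)$. Since $Def(X)$ and $Def(Y)$ are smooth, hence normal, and $f$ is finite and dominant, Zariski--Nagata purity of the branch locus shows $\Delta$ is a hypersurface, nonempty because the singular variety $Y$ forces $\bar 0\in\Delta$. Setting $U:=Def(Y)\setminus\Delta$ and $\tilde U:=f^{-1}(U)=Def(X)\setminus D$, the space $\tilde U$ is connected, being a dense open subset of the irreducible space $Def(X)$, so $f|_{\tilde U}:\tilde U\to U$ is a connected finite covering; it is a regular (Galois) covering exactly when its monodromy group $M:=\Im\bigl(\pi_1(U)\to\mathrm{Perm}(f^{-1}(u))\bigr)$ has order $\deg f$. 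If that holds, the deck group of $\tilde U/U$ acts simply transitively on the fibres, each deck transformation extends --- $Def(X)$ being normal and $f$ finite --- to an automorphism of $Def(X)$ over $Def(Y)$, and restricting these to $f^{-1}(V)$ over any neighbourhood $V\ni\bar 0$ exhibits $f$ as a branched Galois cover in the sense of the statement. So the whole of part~(\ref{item-galois}) comes down to proving $|M|=\deg f$.

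To compute $M$ I would lean on Namikawa's analysis of $\bar\psi:\Y\to Def(Y)$: one has $\Delta=\bigcup_B\Delta_B$, the union over the finitely many codimension-$2$ irreducible components $B$ of $\mathrm{Sing}(Y)$, and over a general point of the smooth hypersurface $\Delta_B$ a transverse slice of the family is the $\Gamma_B$-invariant part of the semi-universal deformation of the rational double point of the $ADE$-type attached to $B$, where $\Gamma_B$ is the relevant group of Dynkin automorphisms. Applying the Brieskorn--Slodowy--Grothendieck theory of simultaneous resolutions in its $\Gamma_B$-equivariant (``folded'') form, the matching transverse slice of $f$ should be the quotient map $\LieAlg h_B\to\LieAlg h_B/W_B$ by the finite Weyl group $W_B=\langle R_\alpha\rangle$ generated by the reflections in the curve classes $\alpha$ contracted over $B$; in particular the local monodromy around $\Delta_B$ is $W_B$, of order equal to the local degree. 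Next I would use local Torelli to identify $Def(X)$ with a neighbourhood of the period point $[\sigma_X]$ in the period domain $\Omega\subset\PP(H^2(X,\ComplexNumbers))$; then $D$ becomes the trace of the arrangement $\bigcup_\alpha\alpha^\perp$ of Beauville--Bogomolov--Fujiki hyperplanes, and the roots belonging to distinct components $B,B'$ span mutually orthogonal sublattices of $H^2(X,\Integers)$. Hence the local monodromies commute, $M=\prod_B W_B$, and $\deg f=\prod_B\deg(\LieAlg h_B\to\LieAlg h_B/W_B)=\prod_B|W_B|=|M|$, giving part~(\ref{item-galois}) together with the identification $G=\prod_B W_B$. \emph{I expect this to be the main obstacle: extracting the precise transverse normal form of $f$ from Namikawa's deformation theory and then globalising the local pictures, in the period coordinates, into the statement that $f$ is the quotient by the finite group $\prod_B W_B$ --- equivalently, that $\deg f$ equals its order.}

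For part~(\ref{item-Hodge-isemetry}), after the shrinking above the fibre $f^{-1}(\bar 0)$ is the finite $G$-set of symplectic resolutions of $Y$, all bimeromorphic to $X$; a bimeromorphism of irreducible holomorphic symplectic manifolds is an isomorphism in codimension one, so it induces a canonical Hodge isometry on $H^2$ and, through the Looijenga--Lunts--Verbitsky structure, a canonical isomorphism of the full integral cohomology. By construction each $g\in G$ is the monodromy of a loop in $Def(X)$ based at $0$ --- a product of small loops about the hyperplanes $\alpha^\perp$ --- followed by such a bimeromorphic identification, so I would obtain from $g$ a monodromy operator on $H^*(X,\Integers)$ whose restriction to $H^2$ is the corresponding product of reflections $R_\alpha$ (that each $R_\alpha$ is itself a monodromy operator is the hyperk\"{a}hler counterpart of the Brieskorn--Burns--Wahl picture for simple singularities of a $K3$ surface). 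Parallel transport preserves the weight-$2$ Hodge structure along any path, so each $g$ acts by a Hodge isometry of $H^2(X)$; since every $\alpha$ is a curve class on the relevant fibres, hence of type $(1,1)$ and $q$-orthogonal to $\sigma_X$, each $R_\alpha$ --- and so each $g$ --- fixes $H^{2,0}(X)=\ComplexNumbers\,\sigma_X$. Finally $G=\prod_B W_B$ acts on the span $\bigoplus_B\LieAlg h_{B,\RealNumbers}\subset H^{1,1}(X,\RealNumbers)$ of the roots as the direct product of the reflection representations of the $W_B$, and a finite reflection group acts faithfully on its reflection representation, so already the action of $G$ on $H^{1,1}(X,\RealNumbers)$ is faithful, which completes part~(\ref{item-Hodge-isemetry}).
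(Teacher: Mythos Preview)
Your strategy conflates Lemma~\ref{lemma-galois-cover} with Theorem~\ref{thm-G-is-isomorphic-to-product-of-weyl-groups}: you propose to show the cover is Galois by first identifying $G=\prod_B W_B$ and then checking $|G|=\deg f$. The obstacle you flag --- extracting the transverse normal form of $f$ along each $\Delta_B$ from Namikawa's work and globalising --- is real and is not a detail: the paper spends all of Section~\ref{sec-Galois-groups} on exactly this, constructing each reflection in $G$ via an explicit flop (Lemma~\ref{lem-galois-reflection-via-flop}) and then bounding $G$ from above (Proposition~\ref{prop-G-embedds-in-G-L}, Lemma~\ref{lem-Weyl-group-action}). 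Your local model $\LieAlg h_B\to\LieAlg h_B/W_B$ is morally correct but is not something Namikawa's paper hands you; making it precise is the content of the theorem, not of the lemma.

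The paper's proof of the lemma is much shorter because it never computes $G$. The trick is this. Over the locus $U\subset Def(Y)$ where $\Y$ is smooth and $f$ is \'etale, the morphism $\nu$ gives an isomorphism $\nu^*: f^*(R^2_{\bar\psi_*}\Integers|_U)\cong R^2_{\psi_*}\Integers|_V$ of local systems; the right-hand side is trivial, so $f:V\to U$ trivialises the monodromy representation $\text{mon}:\pi_1(U)\to O(\Lambda)$ and hence factors through the Galois cover $\widetilde U_K\to U$ attached to $K:=\ker(\text{mon})$. But the period map $p:Def(X)\hookrightarrow\Omega_\Lambda$ is an embedding by Local Torelli, and $p|_V$ manifestly factors through $h:V\to\widetilde U_K$, so $h$ is injective and therefore an isomorphism. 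Thus $V\to U$ is already Galois with group $G=\pi_1(U)/K$, and the $G$-action extends to $Def(X)$ by transporting the $O(\Lambda)$-action on $\Omega_\Lambda$ back through $p$. Faithfulness of $G$ on $H^2$ is then built into the definition.

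For part~(\ref{item-Hodge-isemetry}) your argument again depends on knowing $G=\prod_B W_B$. The paper argues directly: each $\gamma_g$ on $H^*(X,\Integers)$ is a limit, as $t\to 0$, of pullbacks by isomorphisms $X_{g^{-1}(t)}\cong Y_{f(t)}\cong X_t$ between smooth fibres, hence preserves the Hodge structure in the limit; triviality on $H^{2,0}$ holds because $H^{2,0}(X)\subset\pi^*H^2(Y,\ComplexNumbers)$, which is monodromy-invariant; and faithfulness on $H^{1,1}$ follows since $G\hookrightarrow O(H^2(X,\Integers))$ by construction and $G$ acts trivially on $H^{2,0}\oplus H^{0,2}$.
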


The elementary lemma is proven in section \ref{sec-proof}.
Significant generalizations of the lemma
are considered in section \ref{sec-generalizations}.
We proceed to calculate the Galois group.
Let $\Sigma\subset Y$ be the singular locus. 
The {\em dissident locus} $\Sigma_0\subset \Sigma$ is the locus along which the
singularities of $Y$ fail to be of $ADE$ type. 
$\Sigma_0$ is a closed subvariety of $\Sigma$.

\begin{prop} 
\label{prop-dissident-locus}
(\cite{namikawa-deformations}, Propositions 1.6)
$Y$ has only canonical singularities. The dissident locus 
$\Sigma_0$ has codimension at least $4$ in $Y$. 
The complement $\Sigma\setminus\Sigma_0$ is either empty, 
or the disjoint union of codimension $2$ smooth and symplectic subvarieties of 
$Y\setminus \Sigma_0$. 
\end{prop}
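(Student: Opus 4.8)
The plan is to deduce the proposition from three inputs: that a symplectic resolution is crepant; the Poisson symplectic-leaf stratification of $Y$ together with its local product structure; and the classification of two-dimensional rational Gorenstein singularities. First, the canonical-singularities claim. Since $\pi$ is a symplectic resolution, the holomorphic symplectic form on the regular locus $Y\setminus\Sigma$ pulls back to a nowhere-degenerate holomorphic $2$-form $\sigma$ on $X$, and $\sigma^{n}$, with $2n=\dim X$, trivialises $K_X$; as $Y$ is Gorenstein, $\sigma^{n}$ also trivialises $\pi^{*}K_Y$ over $X\setminus\pi^{-1}(\Sigma)$ with no zero or pole along any exceptional divisor, so every discrepancy of $Y$ vanishes, $\pi$ is crepant, and $Y$ has Gorenstein canonical singularities. (Equivalently, one may quote Beauville's theorem that symplectic singularities are canonical.) This is routine; the content of the proposition is the structure of $\Sigma$.

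Next I would set up the leaf stratification. The symplectic form on $Y\setminus\Sigma$ gives a Poisson bivector that extends over $Y$, because $\Sigma$ has codimension $\ge 2$ and $Y$ has rational singularities; thus $Y$ is a Poisson variety, and its symplectic leaves form a finite stratification $Y=\coprod_\alpha S_\alpha$ into smooth, locally closed, symplectic subvarieties, with $Y\setminus\Sigma$ the open leaf. The decisive local input is that the analytic germ of $Y$ at a point $y\in S_\alpha$ splits as a product $(S_\alpha,y)\times(T_y,0)$, where $T_y$ is a symplectic singularity with an isolated singular point at $0$ and $\dim T_y=\mathrm{codim}_Y S_\alpha$. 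In particular $T_y$ is even-dimensional, so every leaf has even codimension in $Y$, and the closure of a leaf is a union of leaves of codimension larger by at least $2$.

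Granting this, I would finish the argument as follows. Along a non-open leaf $S_\alpha$ of codimension $2$, the transverse slice $T_y$ is a two-dimensional symplectic singularity with an isolated singular point; by the classification of rational Gorenstein surface singularities --- which are exactly the du Val singularities $\ComplexNumbers^{2}/\Gamma$ for finite $\Gamma\subset SL(2,\ComplexNumbers)$, i.e.\ of ADE type --- the singularity of $Y$ is of ADE type there, so such leaves are disjoint from $\Sigma_0$. Along any leaf of codimension $\ge 4$ one has $\dim T_y\ge 4$, so $Y$ is not of ADE type and the leaf lies in $\Sigma_0$; since there are no odd-codimension leaves, $\Sigma_0$ is precisely the union of the leaves of codimension $\ge 4$, and hence has codimension $\ge 4$ in $Y$. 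The remaining locus $\Sigma\setminus\Sigma_0$ is the union of the codimension-$2$ leaves, each smooth and symplectic; because the closure of such a leaf meets the rest of $\Sigma$ only in codimension $\ge 4$, i.e.\ inside $\Sigma_0$, these leaves are pairwise disjoint and closed in $Y\setminus\Sigma_0$. Therefore $\Sigma\setminus\Sigma_0$ is either empty --- precisely when $\Sigma$ has codimension $\ge 4$, in particular when $Y$ is smooth --- or a disjoint union of smooth symplectic subvarieties of codimension $2$ in $Y\setminus\Sigma_0$.

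The hard part is the local product structure invoked above: that a symplectic singularity is, formally along a symplectic leaf, the product of the leaf with a transverse symplectic cone, so that leaves have even codimension and the transverse slices carry a genuine symplectic, not merely rational Gorenstein, structure. Slicing $Y$ by $\mathrm{codim}_Y S_\alpha$ generic hyperplanes through $y$ produces only an isolated singularity, with no grip on a symplectic form, so one genuinely needs the analytic form of Weinstein's splitting theorem as developed for symplectic singularities by Kaledin and Namikawa. An alternative that avoids Poisson geometry for the codimension-$2$ part is to argue directly through $\pi$: a symplectic resolution is semismall, which tightly constrains the fibre dimensions over the strata of $\Sigma$, and a transverse-slice argument then identifies the generic singularity along each top-dimensional component of $\Sigma$ as du Val. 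Either way, once this transverse-structure input is granted, the remainder is bookkeeping with the leaf stratification and the classification of surface singularities.
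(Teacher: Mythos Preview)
The paper does not prove this proposition; it is quoted verbatim from Namikawa \cite{namikawa-deformations}, Proposition 1.6, and no argument is given here. So there is no ``paper's own proof'' to compare against.

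That said, your outline is a correct and standard route to the result. The canonical-singularities claim is immediate from crepancy (or from Beauville's theorem that symplectic singularities are canonical). For the structure of $\Sigma$, you invoke the symplectic-leaf stratification and the formal Weinstein-type splitting along leaves; this is exactly the mechanism by which the result is now usually understood, via Kaledin's work \cite{kaledin-semi-small}. One historical remark: Namikawa's original 2001 argument predates Kaledin's full development of the Poisson stratification, and proceeds more directly by analyzing the singular locus and transverse slices without the leaf language; your sketch is closer to the later, cleaner Kaledin formulation. You correctly identify the local product structure as the nontrivial input and do not attempt to reprove it, which is appropriate. The alternative you mention at the end---using semismallness of $\pi$ plus a transverse-slice argument---is also viable and is in the spirit of Namikawa's original approach.
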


Kaledin proved that 
the morphism $\pi$ is {\em semi-small}, i.e., the fiber product
$X\times_YX$ has pure dimension $2n$,
providing further information on the dissident locus \cite{kaledin-semi-small}.

Let $B\subset [\Sigma\setminus\Sigma_0]$ be a connected component,
$E$ the exceptional locus of $\pi$, and $E_B:=\pi^{-1}(B)$.
Then $E_B$ is connected, of pure codimension $1$ in $X$, and each fiber 
$\pi^{-1}(b)$, $b\in B$, is a tree of smooth rational curves, whose dual graph
is a Dynkin diagram of type $ADE$, by the above proposition. 
We refer to this Dynkin diagram as the
{\em Dynkin diagram of the fiber}.
Fix $b\in B$. The fundamental group $\pi_1(B,b)$ acts on the
set of irreducible components of the fiber $\pi^{-1}(b)$ via isotopy classes of
diffeomorphisms of the fiber, and in particular via automorphisms
of the dual graph. We refer to the quotient, of  the
Dynkin diagram of the fiber by the 
$\pi_1(B,b)$-action, as the {\em folded Dynkin diagram} and denote by $W_B$
the Weyl group of the folded root system (see section
\ref{sec-folding}). 
Note that the set of irreducible components of $E_B$ corresponds to 
a basis of simple roots for the folded root system.
Let $\B$ be the set of connected components of $\Sigma\setminus\Sigma_0$.
Let $G$ be the Galois group of the branched Galois cover
$f: Def(X)\rightarrow Def(Y)$, introduced in Lemma \ref{lemma-galois-cover}.

\begin{thm}
\label{thm-G-is-isomorphic-to-product-of-weyl-groups}
$G$ is isomorphic to $\prod_{B\in\B}W_B$.
\end{thm}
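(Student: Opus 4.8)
The plan is to pin down $G$ via the monodromy representation of Lemma~\ref{lemma-galois-cover}(\ref{item-Hodge-isemetry}): I will show that it identifies $G$ with the reflection group $\langle R_v : v\in\Lambda\text{ a root}\rangle$, where $\Lambda=\bigoplus_{B\in\B}\Lambda_B\subset H^2(X,\Integers)$ and $\Lambda_B$ is the span of the classes of the irreducible components $D_{B,1},\dots,D_{B,r_B}$ of $E_B=\pi^{-1}(B)$, and then recognise this group as $\prod_{B\in\B}W_B$. First I would describe $\Lambda_B$. By Namikawa's description of the fibres of $\pi$ over $B$, together with Proposition~\ref{prop-dissident-locus} and Kaledin's semismallness, the components $D_{B,i}$ are the $\pi_1(B,b)$-orbits of exceptional curves in a fibre, hence are in bijection with the vertices of the folded Dynkin diagram; a computation on a complete-intersection surface slice of $X$ transverse to $E_B$ (on which $E_B$ cuts out an $ADE$-configuration of $(-2)$-curves permuted by $\pi_1(B,b)$ through diagram automorphisms) shows that the Gram matrix of $([D_{B,i}])_i$ with respect to the Beauville--Bogomolov--Fujiki form $q$ is, up to sign, the symmetrized Cartan matrix of the folded root system, so $(\Lambda_B,-q)$ is its root lattice and the $[D_{B,i}]$ are simple roots. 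Since distinct $E_B$ meet only over the codimension-$\ge 4$ dissident locus $\Sigma_0$, the $\Lambda_B$ are mutually $q$-orthogonal; hence $\langle R_v:v\in\Lambda\text{ a root}\rangle=\prod_{B}W(\Lambda_B)=\prod_{B}W_B$, and it remains to match $G$ with the left-hand side.

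For the inclusion of $G$ into this reflection group I would use purity of the branch locus: $f$ is unramified in codimension one away from a divisor $D\subset Def(Y)$, and as $Def(Y)$ is a contractible neighbourhood of $\bar 0$, its fundamental group after deleting $D$ is normally generated by meridians of the components of $D$; hence $G$ is generated by the images of these meridians. For a generic point of a component $D_j$, the fibre $Y_t$ acquires, transverse to its singular locus, an $A_1$-surface singularity along an irreducible codimension-$2$ locus $\Sigma_t$; the resolution $X_t\to Y_t$ contracts the $\PP^1$-bundle over $\Sigma_t$, of class $\delta_j\in H^2(X_t,\Integers)\cong H^2(X,\Integers)$, and the meridian acts as the reflection $R_{\delta_j}$ of Picard--Lefschetz type. (Integrality of $R_{\delta_j}$ rests on the fact that $\pi_1(B_j)$ acts on the fibre cohomology by isometries, which forces $q(x,\delta_j)$ to be divisible by the size of the orbit that produces $\delta_j$.) Letting $t$ degenerate to a point of $\overline{D_j}\ni\bar 0$, the locus $\Sigma_t$ specializes into $\mathrm{Sing}(Y)$ and $\delta_j$ into an effective, connected class supported on $E=\bigcup_B E_B$; thus $\delta_j$ is a positive root of a single $\Lambda_B$, and $G\subseteq\langle R_v:v\in\Lambda\text{ a root}\rangle$.

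The reverse inclusion is the heart of the matter: one must produce, for each vertex $i$ of each folded diagram, a component of $D$ whose vanishing class is $[D_{B,i}]$ --- i.e.\ a one-parameter family of deformations of $Y$ along which exactly the $i$-th folded node over $B$ opens up (the others persisting) and which admits a simultaneous resolution only after a double cover. Here I would invoke the local picture near a generic point of $B$: by the Brieskorn--Tyurina theorem the semiuniversal deformation of the transverse $ADE_{\Delta_B}$-singularity has base $\mathfrak h_{\Delta_B}/W(\Delta_B)$ with simultaneous resolution over $\mathfrak h_{\Delta_B}$, and imposing $\pi_1(B,b)$-equivariance --- together with Namikawa's and Kaledin's results, and the codimension-$\ge 4$ bound on $\Sigma_0$ which prevents the dissident locus from contributing branch components --- cuts this down to the folded quotient $\mathfrak h_{\Delta_B}^{\pi_1(B)}\to\mathfrak h_{\Delta_B}^{\pi_1(B)}/W_B$, whose reflecting hyperplanes globalize to the desired components of $D$ and realize the $R_{[D_{B,i}]}$, distinct $B$ contributing commuting factors. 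Combining the two inclusions gives $G=\langle R_v:v\in\Lambda\text{ a root}\rangle=\prod_{B\in\B}W(\Lambda_B)=\prod_{B\in\B}W_B$. The main obstacle is exactly this last step --- checking that every simple root of every folded root system genuinely occurs as a vanishing class --- which requires the detailed comparison of $Def(Y)$ and $Def(X)$ due to Namikawa and Kaledin, and careful bookkeeping of the $\pi_1(B)$-folding, the case $A_{2n}$ (where the folded diagram is non-reduced and the relevant node has square $-2$) deserving separate attention.
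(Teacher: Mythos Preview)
Your overall architecture---two inclusions via the monodromy action on $H^2$---matches the paper's, but both halves of your argument have genuine gaps that the paper fills by quite different means.

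For the lower bound $\prod_B W_B\subseteq G$ you propose to globalize the Brieskorn--Tyurina picture of the transverse $ADE$ slice and read off the reflecting hyperplanes of the folded Cartan; you yourself flag this as ``the main obstacle,'' and indeed you offer no mechanism for why the local reflecting hyperplanes survive as branch components of the global $f$. The paper bypasses this entirely. For each irreducible exceptional divisor $E_i$ it takes a generic smooth curve $C\subset Def(X)$ through $0$, performs the elementary flop of the family $\X^0_C$ along $E_i^0$ to obtain $\X'$, and shows (via an isomorphism of local systems of weight-$2$ Hodge structures) that the completion of $\X'$ along $\widetilde U$ agrees with that of $g^*\X^0_{g(C)}$, where $g$ is the reflection in $[E_i]$. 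A separate pro-representability criterion for the deformation functor of the open set $U=Y\setminus\Sigma_0$ then certifies that $g$ is a deck transformation of $f$. The folded $A_{2k}$ case needs a composite of three local $A_1$ flops, glued via the $A_2$ braid relation $\rho_1\rho_2\rho_1=\rho_2\rho_1\rho_2$.

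For the upper bound $G\subseteq\prod_B W_B$, your meridian argument presupposes that at a generic point of each branch component the fibre $Y_t$ acquires an $A_1$ singularity along an irreducible locus whose vanishing class is a root of some $\Lambda_B$; this is essentially the theorem in disguise and is nowhere justified. The paper's route is structural: each $g\in G$ is realised by a limiting correspondence $Z_g\subset X\times_Y X$ of pure dimension $2n$, whose action has the shape $[Z_g]_*=\mathrm{id}+\sum_B\sum_{i,j\in\E_B}a_{ij}\,e_i\otimes e_j^\vee$ with $a_{ij}\in\Integers_{\ge 0}$. This forces $g$ to preserve each $\Lambda_B$, act trivially on $L^\perp$, and act trivially on the fundamental group $\Pi_B=\Lambda_B^*/\Lambda_B^\vee$ of the root system; since $W_B$ is exactly the subgroup of $O(\Lambda_B)$ fixing $\Pi_B$, the inclusion follows. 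A related remark: your surface-slice computation gives the Cartan numbers $-\int_X e_i^\vee e_j$, not the Beauville--Bogomolov Gram matrix $(e_i,e_j)$. The link $e_i^\vee=\frac{-2e_i}{(e_i,e_i)}$ is not a slice calculation; the paper deduces it from the fact that the flop-induced involution (which visibly sends $e_i\mapsto -e_i$ and fixes $(e_i^\vee)^\perp$) is a limit of Hodge isometries, hence itself an isometry.
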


\preprint{
The Theorem is the combined results of Lemmas 
\ref{lem-Weyl-group-action} and \ref{lem-Weyl-group-action-A-even}.}
\journal{The Theorem follows from Lemma
\ref{lem-Weyl-group-action}, excluding certain exceptional cases. 
The proof of the exceptional cases is outlined in section
\ref{sec-brief-description-of-folded-A-2k-case}. A detailed proof 
can be found in the preprint version
of this paper \cite{preprint-version}, Lemma 4.24.}
The $K3$ case of the Theorem was proven by Burns and Wahl 
\cite{burns-wahl}, Theorem 2.6 and Remark 2.7.
See also \cite{szendroi-enhanced}.
Y. Namikawa recently extended Theorem 
\ref{thm-G-is-isomorphic-to-product-of-weyl-groups}
to the case of Poisson deformation spaces of affine symplectic varieties
\cite{namikawa-Poisson-deformations-II}.

The above group $G$,
associated to the {\em birational} contraction $\pi:X\rightarrow Y$,
acts on $H^*(X,\Integers)$ via local monodromy operators, 
by Lemma \ref{lemma-galois-cover}. 
One can define abstractly the {\em local monodromy group} of $X$,  
which is larger in general (\cite{markman-monodromy-I}, Definition 2.4).
It includes, for example, monodromy operators
induced by automorphisms and birational automorphisms.
Affine Weyl groups can be constructed as subgroups of 
the local monodromy group, by taking the group generated
by Weyl groups $G_i$, associated to two or more 
birational contractions $\pi_i:X\rightarrow Y_i$ as 
in Theorem \ref{thm-G-is-isomorphic-to-product-of-weyl-groups}.
Consider, for example, a $K3$ surface $S$, admitting an
affine Dynkin diagram of rational curves as a fiber in an
elliptic fibration. Choose two Dynkin sub-diagrams of finite type, which union
is the whole affine diagram. 
Then the elliptic fibration factors through the two
different birational contractions, and the two Galois groups
will generate an affine Weyl group.
Affine Weyl group monodromy actions occur for quiver varieties
\cite{nakajima-reflections}.
A conjectural characterization of the local monodromy group
is discussed in \cite{markman-monodromy-I}, section 2.


The paper is organized as follows. 
Lemma \ref{lemma-galois-cover}, stating that the modular cover is Galois, 
is proven in section \ref{sec-proof}.
In section \ref{sec-folding} we recall the procedure of folding 
Dynkin diagrams of $ADE$ type by diagram automorphisms.
Section \ref{sec-Galois-groups} is dedicated to the proof of Theorem 
\ref{thm-G-is-isomorphic-to-product-of-weyl-groups}, i.e., the calculation
of the modular Galois group $G$ as a product of Weyl groups. 
The key step is the construction of a reflection in $G$, for every 
codimension one irreducible component of the exceptional 
locus of the contraction $\pi:X\rightarrow Y$ 
\preprint{
(Lemmas \ref{lem-galois-reflection-via-flop} and 
\ref{lem-galois-reflection-via-flop-A-2r-case}).}
\journal{(Lemma \ref{lem-galois-reflection-via-flop}).}
In section \ref{sec-examples} we apply Theorem 
\ref{thm-G-is-isomorphic-to-product-of-weyl-groups} 
to calculate modular Galois groups in specific examples. 
We consider, in particular, O'Grady's $10$-dimensional example
of an irreducible holomorphic symplectic manifold $X$
with $b_2(X)=24$. We apply Theorem 
\ref{thm-G-is-isomorphic-to-product-of-weyl-groups} to
explain the occurrence of the root lattice of $G_2$
as a direct summand of the lattice $H^2(X,\Integers)$ 
(Lemma \ref{lemma-Ogrady-Galois-group-is-W-G-2}).
The above is part of an earlier result of Rapagnetta \cite{rapagnetta}.

Section \ref{sec-generalizations} is devoted to 
generalizations of Lemma \ref{lemma-galois-cover}.
We consider birational contractions
$\pi:X\rightarrow Y$, where $Y$ is
affine symplectic, 
or the symplectic variety $Y$ does not admit a crepant resolution, 
or $Y$ is a Calabi-Yau threefold with a curve of $ADE$-singularities.
Again we prove that the associated modular morphism 
$f:Def(X)\rightarrow Def(Y)$ is a Galois branched cover of it image
(it need not be surjective in the non-symplectic cases).
The generalization in section \ref{sec-affine-examples}
is sufficient to explain the analogy between the results of 
the current paper and results of Nakajima on quiver varieties
\cite{nakajima-reflections}. Arguably, the most significant
generalization is Lemma \ref{lemma-generalized-galois-cover}.
The latter is used in Lemma \ref{lemma-CY-threefolds}
to explain the apparent analogy between the current 
paper and the string theory phenomena of enhanced gauge symmetries 
associated to Calabi-Yau threefolds with a curve of $ADE$-singularities
\cite{aspinwall,katz-morrison-plesser}.
These symmetries were explained mathematically by 
Szendr\Hun{o}i \cite{szendroi-enhanced,szendroi-artin-group}
and related, in a limiting set-up, to Hitchin systems of ADE-type
in \cite{donagi-pantev-A1,donagi-pantev-ADE}.

{\em Acknowledgments:} The author benefited from conversations
and correspondences with 
F. Catanese,
B. Hassett,
M. Lehn,
H. Nakajima, 
Y. Namikawa,
K. O'Grady, 
T. Pantev, 
A. Rapagnetta,
C. Sorger, and 
K. Yoshioka. 
I am deeply grateful to them all.
This paper depends heavily on the results of the paper 
\cite{namikawa-deformations}. I would like to thank 
Y. Namikawa for writing this fundamental paper, for reading
an early draft of the proof of Lemma
\ref{lemma-galois-cover}, for his encouragement, and 
for helpful suggestions. 
The author considered earlier a local monodromy reflection, associated
to the Hilbert-Chow contraction $S^{[n]}\rightarrow S^{(n)}$,
from the Hilbert scheme of length $n$ subschemes
on a $K3$ surface to the symmetric product 
\cite{markman-monodromy-I}, Lemma 2.7 and
\cite{markman-constraints}, Example 5.2. 
The relevance of Namikawa's work in this case was
pointed out to me by K. Yoshioka. I thank him for this, and for numerous 
instructive conversations.

%
\section{Modular branched Galois covers via local Torelli
}
\label{sec-proof}
We prove Lemma \ref{lemma-galois-cover} in this section.

Part (\ref{item-galois}):
We may assume that the neighborhoods $Def(X)$ and 
$Def(Y)$ are sufficiently small, 
so that the fiber of $f$  over $\bar{0}$ consists of
the single point $0$ corresponding to $X$.
The morphism $\pi:X\rightarrow Y$ deforms as a morphism $\nu$ of the 
semi-universal families, which fits in a commutative diagram 
\begin{equation}
\label{diagram-f-general}
\begin{array}{ccc}
\X & \RightArrowOf{\nu} & \Y
\\
\psi \ \downarrow \hspace{1ex} & & \bar{\psi} \ \downarrow \ \hspace{1ex}
\\
Def(X) & \RightArrowOf{f} & Def(Y),
\end{array}
\end{equation}
by \cite{kollar-mori}, Proposition 11.4.
Let $U\subset Def(Y)$ be the largest open subset satisfying the following 
conditions:
The semi-universal deformation $\Y$
restricts to a smooth family over $U$ and $f$ is unramified over $U$. 
Diagram (\ref{diagram-f-general})
restricts over the subset $U$
of $Def(Y)$ to a cartesian diagram, by Theorem
\ref{thm-namikawa}.
Set $V:=f^{-1}(U)$. 
The group $H^2(X,\Integers)$ is endowed with the monodromy invariant 
Beauville-Bogomolov symmetric bilinear pairing \cite{beauville}.
Set $\Lambda:=H^2(X,\Integers)$ and let 
\[
\Omega_\Lambda \ := \ \{
\ell\in \PP{H}^2(X,\ComplexNumbers) \ : \ (\ell,\ell)=0 \ \ \ \mbox{and} \ \ \ 
(\ell,\bar{\ell}) > 0\} 
\]
be the period domain. 
The local system $R^2_{\psi_*}(\Integers)$ is trivial.
Choose the trivialization $\varphi:R^2_{\psi_*}(\Integers)\rightarrow \Lambda$,
inducing the identity on $H^2(X,\Integers)$.
Let $p:Def(X)\rightarrow \Omega_\Lambda$ be the period map, given by 
$p(t)=\varphi(H^{2,0}(X_t))$. Recall that $p$ is a holomorphic embedding 
of $Def(X)$ as an open subset of $\Omega_\Lambda$, by
the Local Torelli Theorem \cite{beauville}.
We get the following diagram:
\begin{equation}
\label{main-diagram}
\begin{array}{ccccc}
& & \X & \LongRightArrowOf{\nu} & \Y 
\\
& & \hspace{1ex} \ \downarrow \ \psi & & \hspace{1ex} \ \downarrow \ \bar{\psi}
\\
\Omega_\Lambda & \LongLeftArrowOf{p} & Def(X) & \LongRightArrowOf{f} &
Def(Y)
\\
&  & \cup & & \cup 
\\
& 
& V & \longrightarrow & U.
\end{array}
\end{equation}


Choose a point $u\in U$. We get the monodromy homomorphism
${\rm mon}:\pi_1(U,u)\rightarrow O(\Lambda):=O\left[H^2(X,\Integers)\right]$.
Set $K:=\ker({\rm mon})$ and let $f_K:\widetilde{U}_K\rightarrow U$
be the Galois\footnote{I thank F. Catanese for the suggestion to consider
this Galois cover. It simplifies the original proof,
which considered instead the Galois closure of $f:V\rightarrow U$.} 
cover of $U$ associated to $K$.

We have a natural isomorphism 
\begin{equation}
\label{eq-pull-back-by-nu-is-an-isomorphism}
\nu^* \ : \ f^*\left[R^2_{\bar{\psi}_*}(\Integers\restricted{)}{U}\right]
\ \ \ \LongIsomRightArrow \ \ \ 
R^2_{\psi_*}(\Integers\restricted{)}{V}.
\end{equation}
Hence, the local system $f^*\left[R^2_{\bar{\psi}_*}(\Integers\restricted{)}{U}\right]$
is trivial. It follows that the covering $f:V\rightarrow U$ factors as $f=h\circ f_K$ 
via a covering $h:V\rightarrow \widetilde{U}_K$.
The restriction of the period map $p$ to $V$ clearly factors through $h$. 
Now $p$ is injective. Hence, so is $h$.
We conclude that $h$ is an isomorphism.

Set $G:=\pi_1(U,u)/K$. 
It remains to extend the action of $G$ on $V$ to an action on $Def(X)$.
Set $\phi:=\varphi\circ\nu^*:
f^*\left[R^2_{\bar{\psi}_*}(\Integers\restricted{)}{U}\right]\rightarrow 
\Lambda$.
Let $\gamma:G\rightarrow O(\Lambda)$ be the
homomorphism induced by the monodromy representation. 
Given a deck transformation $g\in G$, we get the commutative diagram
of trivializations of local systems over $V$:
\begin{equation}
\label{eq-gamma-g}
\begin{array}{ccc}
f^*\left[R^2_{\bar{\psi}_*}(\Integers\restricted{)}{U}\right] & 
\LongRightArrowOf{\phi} & \Lambda
\\
= \ \downarrow \ \hspace{1Ex} & & \hspace{3ex} \ \downarrow \ \gamma_g
\\
f^*\left[R^2_{\bar{\psi}_*}(\Integers\restricted{)}{U}\right] & 
\LongRightArrowOf{(g^{-1})^*\phi} & \Lambda.
\end{array}
\end{equation}

The group $O(\Lambda)$ acts on 
the period domain and we denote the automorphism of 
$\Omega_\Lambda$ corresponding to $\gamma_g$ by $\gamma_g$ as well.
We have\\
${\displaystyle
\gamma_g(p(t))=
\gamma_g\left(\varphi_{t}(H^{2,0}(X_{t}))\right)=
\gamma_g\left(\phi_{t}(H^{2,0}(Y_{f(t)}))\right)
\stackrel{(\ref{eq-gamma-g})}{=}}$
\\
${\displaystyle \phi_{g^{-1}(t)}(H^{2,0}(Y_{f(t)}))=
\varphi_{g^{-1}(t)}(H^{2,0}(X_{g^{-1}(t)}))=p(g^{-1}(t)).
}
$\\
We conclude the equality
\begin{equation}
\label{eq}
p\circ g^{-1} \ \ \  = \ \ \ \gamma_g\circ  p.
\end{equation}

\begin{claim}
\begin{enumerate}
\item
\label{claim-item-image-of-p-is--gamma-invariant}
We can choose the above open neighborhood $Def(Y)$ of $\bar{0}$ 
in the Kuranishi space of $Y$, and similarly $Def(X)$ in that of $X$, 
to satisfy 
\[
\gamma_g(p(Def(X))=p(Def(X)).
\]
\item
\label{claim-item-period-of-0-is-gamma-invariant}
$\gamma_g(p(0))=p(0)$.
\end{enumerate}
\end{claim}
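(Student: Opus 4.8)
The plan is to deduce both items from the identity $p\circ g^{-1} = \gamma_g\circ p$ established in (\ref{eq}), together with the fact that $p$ is an open embedding and $V$ is an open dense subset of $Def(X)$. The underlying point is that $\gamma_g$ is a linear automorphism of the period domain $\Omega_\Lambda$, hence a biholomorphism, so $\gamma_g(p(Def(X)))$ is an open subset of $\Omega_\Lambda$; the content of part (\ref{claim-item-image-of-p-is--gamma-invariant}) is that after shrinking we can arrange this open set to coincide with $p(Def(X))$ itself, for all $g$ in the finite group $G$ simultaneously.

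First I would observe that $G$ is finite (it is a quotient of a deck transformation group of a finite covering, since $f$ has finite fibers by Theorem \ref{thm-namikawa}), so it suffices to find a single open neighborhood stable under all $g\in G$. Consider the open set $W := \bigcap_{g\in G} \gamma_g(p(Def(X))) \subset \Omega_\Lambda$. This is a finite intersection of open subsets of $\Omega_\Lambda$, each containing $\gamma_g(p(V))$, and since $p(V)$ is dense in $p(Def(X))$ and the period of $0$ will be shown (in part (\ref{claim-item-period-of-0-is-gamma-invariant})) to lie in every $\gamma_g(p(Def(X)))$, the set $W$ is a nonempty open neighborhood of $p(0)$ in $\Omega_\Lambda$. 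By construction $\gamma_g(W) = W$ for all $g$, because $G$ is a group and the $\gamma_g$ form an action. Then I would replace $Def(X)$ by a connected open neighborhood of $0$ contained in $p^{-1}(W)$, and $Def(Y)$ by a connected open neighborhood of $\bar 0$ contained in $f(p^{-1}(W))$, shrinking as necessary so that the hypotheses of Theorem \ref{thm-namikawa} and the construction preceding the claim (in particular the cartesian property over $U$ and the factorization) are preserved. With this choice $p(Def(X)) \subseteq W$ and $\gamma_g(p(Def(X))) \subseteq \gamma_g(W) = W$; to get equality one argues, using (\ref{eq}) and $g^{-1}\in G$, that $g^{-1}$ maps $p^{-1}(W)$ into itself, hence $\gamma_g$ maps $p(Def(X))$ onto $p(Def(X))$ — more carefully, one takes $Def(X)$ to be precisely a $G$-stable connected component of $p^{-1}(W)$ through $0$, using that $G$ permutes such components and fixes the one containing $0$.

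For part (\ref{claim-item-period-of-0-is-gamma-invariant}), I would argue that $p(0) = \varphi(H^{2,0}(X))$ and that $\gamma_g$ acts on $H^2(X,\Integers)$ through a monodromy operator arising from the local system over $U$; such an operator, being a limit of monodromy along loops that remain in the locus where the Hodge structure varies, must preserve the Hodge filtration of the \emph{central} fiber. Concretely: the monodromy representation ${\rm mon}:\pi_1(U,u)\to O(\Lambda)$ extends continuously to the action on the total space, and since $0$ (resp. $\bar 0$) is the special point and the deformation is semi-universal, any deck transformation of $\widetilde U_K$ — equivalently any element of $G$ — fixes the point over $0$; passing to the limit $t\to 0$ in (\ref{eq}), continuity of $p$ and of $\gamma_g$ gives $\gamma_g(p(0)) = p(g^{-1}\cdot 0) = p(0)$, since $g^{-1}$ fixes $0$. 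Alternatively, invariance of $p(0)$ can be seen directly from Lemma \ref{lemma-galois-cover}(\ref{item-Hodge-isemetry})'s statement that the $G$-action on $H^{2,0}(X)$ is trivial — but since that is what we are partly in the business of proving, the limiting argument is the honest route.

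The main obstacle I anticipate is the bookkeeping in part (\ref{claim-item-image-of-p-is--gamma-invariant}): making sure that after shrinking $Def(X)$ and $Def(Y)$ one still has all the structures invoked earlier — the semi-universal families, the morphism $\nu$ of diagram (\ref{main-diagram}), the open set $U$ with $f$ unramified and $\Y$ smooth over it, and the resulting factorization $f = h\circ f_K$ with $h$ an isomorphism — and that $G$ is unchanged (it is, since $\pi_1(U,u)$ and its monodromy kernel are insensitive to shrinking once $U$ is connected and contains the relevant loops; one should shrink $U$ last, or check $\pi_1$ is stable). The topological heart of the matter — that a finite group of biholomorphisms of a manifold fixing a point admits arbitrarily small invariant neighborhoods of that point — is elementary (average a metric, or intersect the finitely many translates of a small ball), but it must be carried out in $\Omega_\Lambda$ and then pushed down through $p$ and $f$ with care about connectedness.
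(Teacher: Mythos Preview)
Your approach reverses the paper's order: you try to prove part~(\ref{claim-item-period-of-0-is-gamma-invariant}) first by a limiting argument, then use it to show the intersection $W=\bigcap_{g}\gamma_g(p(Def(X)))$ is a neighborhood of $p(0)$ for part~(\ref{claim-item-image-of-p-is--gamma-invariant}). The paper does the opposite, and its route for (\ref{claim-item-image-of-p-is--gamma-invariant}) is much shorter: since $g$ is a deck transformation of $V$ one has $g^{-1}(V)=V$, so equation~(\ref{eq}) already gives $\gamma_g(p(V))=p(V)$ on the nose. Then one only needs that $p(Def(X))$ equals the interior of its closure (achievable after shrinking), since closures and interiors are preserved by the homeomorphism $\gamma_g$. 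You never use the equality $g^{-1}(V)=V$, which is why your argument for (\ref{claim-item-image-of-p-is--gamma-invariant}) becomes more involved.

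Your argument for (\ref{claim-item-period-of-0-is-gamma-invariant}) has a genuine gap. You write ``$\gamma_g(p(0))=p(g^{-1}\cdot 0)=p(0)$, since $g^{-1}$ fixes $0$,'' but $g^{-1}$ is at this stage only a deck transformation of $V$ and is \emph{not defined} at $0$; asserting that it fixes $0$ is precisely what needs proof. The limiting argument can be salvaged: for $t_n\to 0$ in $V$ one has $f(g^{-1}(t_n))=f(t_n)\to\bar 0$, and since $f$ is proper with $f^{-1}(\bar 0)=\{0\}$, this forces $g^{-1}(t_n)\to 0$. But notice that this is exactly the mechanism the paper uses for (\ref{claim-item-period-of-0-is-gamma-invariant}) after establishing (\ref{claim-item-image-of-p-is--gamma-invariant}): once $\alpha_g:=p^{-1}\circ\gamma_g\circ p$ is defined on all of $Def(X)$, the identity $f\circ\alpha_g=f$ together with $f^{-1}(\bar 0)=\{0\}$ gives $\alpha_g(0)=0$. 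So the missing ingredient in your limit is the same fact the paper isolates cleanly; you should either invoke properness of $f$ explicitly, or follow the paper and prove (\ref{claim-item-image-of-p-is--gamma-invariant}) first via $g^{-1}(V)=V$.
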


\begin{proof}
(\ref{claim-item-image-of-p-is--gamma-invariant}) 
The equality (\ref{eq}) yields
$\gamma_g(p(V))=p\left(g^{-1}(V)\right)=p(V)$.
The period map $p$ is open
and biholomorphic onto its image, so the image 
$p(Def(X))$ is the interior
of its closure in $\Omega_\Lambda$, possibly after replacing $Def(X)$ and 
$Def(Y)$ by smaller open neighborhoods. 
The closure of $p(Def(X))$
is equal to the closure of $p(V)$ and is thus $\gamma_g$-invariant.

(\ref{claim-item-period-of-0-is-gamma-invariant}) 
Define the homomorphism $\alpha : G\rightarrow \Aut(Def(X))$ by 
\[
\alpha_g \ \ \  := \ \ \ p^{-1}\circ \gamma_g\circ p.
\] 
We have
\[
f\circ \alpha_g=
f\circ p^{-1}\circ \gamma_g \circ p
\stackrel{(\ref{eq})}{=}
f\circ p^{-1}\circ p\circ g^{-1}=
f\circ g^{-1}= f.
\]
Thus $\alpha_g(0)$ belongs to the fiber
of $f$ over $f(0)$. This fiber consists of a single point,
so $\alpha_g(0)=0$.
\end{proof}


Proof of Lemma \ref{lemma-galois-cover} part (\ref{item-Hodge-isemetry}): 
The monodromy action of $G$
on $H^2(X,\Integers)$ is given by the homomorphism $\gamma$. 
Let $\tilde{\gamma}:G\rightarrow GL[H^*(X,\Integers)]$
be the  monodromy representation and consider the analogue of
Diagram (\ref{eq-gamma-g}), where $\tilde{f}=f$, 
$\tilde{\varphi}:R^*_{\psi_*}(\Integers)\nopagebreak \rightarrow\nopagebreak 
H^*(X,\Integers)$
is the trivialization, inducing the identity on the fiber $H^*(X,\Integers)$
over $0$, 
$\tilde{\phi}:=\tilde{\varphi}\circ
\nu^*:f^*\left[R^*_{\bar{\psi}_*}(\Integers\restricted{)}{U}\right]
\rightarrow H^*(X,\Integers)$, and
$\tilde{\gamma}_g\circ\tilde{\phi}=(g^{-1})^*\tilde{\phi}$.
Given $t\in V$, we get that
\[
\tilde{\varphi}^{-1}_{g^{-1}(t)}\tilde{\gamma}_g\tilde{\varphi}_t:
H^*(X_t,\Integers)
\rightarrow
H^*(X_{g^{-1}(t)},\Integers)
\]
is induced by pullback via  
$X_{g^{-1}(t)}\IsomRightArrowOf{\nu_{g^{-1}(t)}} 
Y_{f(t)}\IsomRightArrowOf{\nu^{-1}_t}X_t$
and thus preserves the Hodge structure.
Letting $t$ approach $0\in Def(X)$, we get that 
$\tilde{\varphi}^{-1}_{g^{-1}(0)}\tilde{\gamma}_g
\tilde{\varphi}_0:H^*(X_0,\Integers)\rightarrow H^*(X_{g^{-1}(0)},\Integers)$ 
preserves the Hodge structure as well.
The statement follows, since $g(0)=\alpha_g(0)=0$,
by the above claim, and $\tilde{\varphi}_0=id$. 
The subspace $H^{2,0}(X)$ is a trivial representation of $G$, since 
$H^{2,0}(X)$ is contained in the trivial\footnote{
The triviality of the monodromy representation 
$\pi^*H^2(Y,\ComplexNumbers)$ will be proven in detail 
in Proposition \ref{prop-G-embedds-in-G-L}.} 
representation $\pi^*H^2(Y,\ComplexNumbers)$,
by \cite{kaledin-semi-small}, Lemma 2.7.
\EndProof

%
\section{Folding Dynkin diagrams}
\label{sec-folding}

Consider a simply laced root system $\widetilde{\Phi}$ 
with root lattice $\Lambda_r$
and a basis of simple roots $\F:=\{f_1, \dots, f_r\}$. 
We regard the simple roots also as the nodes of the
Dynkin graph. The bilinear pairing on $\Lambda_r$ is determined by the
Dynkin graph as follows: 
\[
(f_i,f_j)=\left\{
\begin{array}{ccl}
2 & \mbox{if} & i=j,
\\
-1 & \mbox{if} & i\neq j \ \mbox{and the nodes} \ f_i \ \mbox{and} \ f_j \
\mbox{are adjacent},
\\
0 & \mbox{if} & i\neq j \ \mbox{and the nodes} \ f_i \ \mbox{and} \ f_j \
\mbox{are not adjacent}.
\end{array}\right.
\]
Let $\Gamma$ be a subgroup of the automorphism group of the Dynkin graph. 
If non-trivial, then $\Gamma=\Integers/2\Integers$, for types $A_n$, $n\geq 2$,
$D_n$, $n\geq 5$, and $E_6$, and $\Gamma$ is a subgroup of the
symmetric group $Sym_3$, for type $D_4$
\cite{humphreys}, section 12.2.
Given an orbit $j:=\Gamma\cdot f_{\tilde{j}}\in \F/\Gamma$, 
let $e_j$ be the orthogonal projection of $f_{\tilde{j}}$ from
$\Lambda_r$ to the $\Gamma$-invariant subspace 
$[\Lambda_r\otimes_\Integers\RationalNumbers]^\Gamma$.
The projection clearly depends only on the orbit $\Gamma\cdot f_{\tilde{j}}$.

We recall the construction of the {\em folded root system}
(also known as the root system of the twisted group associated to
an outer automorphism, see \cite{carter}, Ch. 13). 
Its root lattice is the lattice 
$\Lambda_{\bar{r}}:=\mbox{span}_\Integers\{e_1, \dots, e_{\bar{r}}\}$, 
where $\bar{r}$ is the cardinality of $\F/\Gamma$.
Set $e_i^\vee:=\frac{2(e_i,\bullet)}{(e_i,e_i)}$, $i\in \F/\Gamma$.
Let $f_{\tilde{i}}$ and $f_{\tilde{j}}$ be two simple roots and 
set $i:=\Gamma\cdot f_{\tilde{i}}$ and $j:=\Gamma\cdot f_{\tilde{j}}$.

Examination of the Dynkin graphs of ADE type shows that there are
only two types of $\Gamma$ orbits:
\begin{enumerate}
\item
\label{case-of-non-adjacent-rrots}
The orbit $\Gamma\cdot f_{\tilde{j}}$ consists of pairwise non-adjacent
roots.
\item
\label{case-of-two-adjacent-roots}
The original root system is of type $A_{2n}$, $\Gamma=\Integers/2\Integers$,
and the orbit $\Gamma\cdot f_{\tilde{j}}$ consists of the two
middle roots of the Dynkin graph.
\end{enumerate}

\preprint{
The following Lemma will be used in the proofs of Lemmas 
\ref{lem-Weyl-group-action} and \ref{lem-Weyl-group-action-A-even}.}
\journal{The following Lemma will be used in the proof of Lemma
\ref{lem-Weyl-group-action}.}

\begin{lem}
\label{lemma-folding}
The following equation holds:
\\
${\displaystyle 
e_j^\vee(e_i) \ = \ 
\left\{\begin{array}{ccl}
2 & \mbox{if} & i=j,
\\
\sum_{f_{\tilde{k}}\in\Gamma\cdot f_{\tilde{j}}}
\left(f_{\tilde{k}},f_{\tilde{i}}\right)
& \mbox{if} & i\neq j \ \mbox{and} \ 
\mbox{type}(\Gamma\cdot f_{\tilde{j}})=1,
\\
2\sum_{f_{\tilde{k}}\in\Gamma\cdot f_{\tilde{j}}}
\left(f_{\tilde{k}},f_{\tilde{i}}\right)
& \mbox{if} & i\neq j \ \mbox{and} \ 
\mbox{type}(\Gamma\cdot f_{\tilde{j}})= 2.
\end{array}
\right. 
}$
\\
Consequently, $e_j^\vee(e_i)$ is an integer and the reflection 
$\rho_j(x)=x-e_j^\vee(x)e_j$ by $e_j$ maps $\Lambda_{\bar{r}}$
onto itself. 
\end{lem}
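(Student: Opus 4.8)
The plan is to compute $e_j^\vee(e_i)$ directly from the definitions, handling the two orbit types separately, and then to read off integrality and the lattice-stability of $\rho_j$ as immediate consequences. First I would record the basic formula for the orthogonal projection: if $\pi_\Gamma$ denotes orthogonal projection from $\Lambda_r\otimes\RationalNumbers$ onto the $\Gamma$-invariant subspace, then $e_j=\pi_\Gamma(f_{\tilde j})=\frac{1}{|\Gamma\cdot f_{\tilde j}|}\sum_{f_{\tilde k}\in\Gamma\cdot f_{\tilde j}}f_{\tilde k}$, because averaging over the orbit is $\Gamma$-equivariant and fixes the invariant subspace. Since $e_i=\pi_\Gamma(f_{\tilde i})$ and $\pi_\Gamma$ is self-adjoint and idempotent, we get $(e_i,e_j)=(f_{\tilde i},e_j)=\frac{1}{|\Gamma\cdot f_{\tilde j}|}\sum_{f_{\tilde k}\in\Gamma\cdot f_{\tilde j}}(f_{\tilde k},f_{\tilde i})$, and likewise $(e_j,e_j)=\frac{1}{|\Gamma\cdot f_{\tilde j}|}\sum_{f_{\tilde k}\in\Gamma\cdot f_{\tilde j}}(f_{\tilde k},f_{\tilde j})$.

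Next I would evaluate $(e_j,e_j)$ using the case analysis. In orbit type~\eqref{case-of-non-adjacent-rrots} the roots in $\Gamma\cdot f_{\tilde j}$ are pairwise non-adjacent, so $(f_{\tilde k},f_{\tilde j})=2\delta_{\tilde k,\tilde j}$ and the sum over the orbit is just $2$; hence $(e_j,e_j)=\frac{2}{|\Gamma\cdot f_{\tilde j}|}$. In orbit type~\eqref{case-of-two-adjacent-roots} we have $|\Gamma\cdot f_{\tilde j}|=2$ with the two roots adjacent, so the orbit sum is $(f_{\tilde j},f_{\tilde j})+(f_{\tilde j'},f_{\tilde j})=2-1=1$, giving $(e_j,e_j)=\tfrac12$. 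Substituting into $e_j^\vee(e_i)=\frac{2(e_i,e_j)}{(e_j,e_j)}$ and cancelling the $\frac{1}{|\Gamma\cdot f_{\tilde j}|}$ against $\frac{2}{(e_j,e_j)}$ yields exactly $\sum_{f_{\tilde k}}(f_{\tilde k},f_{\tilde i})$ in type~\eqref{case-of-non-adjacent-rrots} and $2\sum_{f_{\tilde k}}(f_{\tilde k},f_{\tilde i})$ in type~\eqref{case-of-two-adjacent-roots}; the case $i=j$ gives $2$ by the definition of $e_j^\vee$, completing the stated formula.

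For the consequences: each $(f_{\tilde k},f_{\tilde i})\in\Integers$ since it is an entry of the (generalized) Cartan matrix, so $e_j^\vee(e_i)\in\Integers$. Because $\{e_1,\dots,e_{\bar r}\}$ is a $\Integers$-basis of $\Lambda_{\bar r}$ and $e_j^\vee$ is linear, it follows that $e_j^\vee(\Lambda_{\bar r})\subseteq\Integers$; therefore $\rho_j(x)=x-e_j^\vee(x)e_j$ maps $\Lambda_{\bar r}$ to itself (and is an involution, being a reflection).

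The only real subtlety — the ``main obstacle'' — is verifying the dichotomy of orbit types and the adjacency count in each: namely that within any $\Gamma$-orbit of simple roots the roots are non-adjacent except for the single $A_{2n}$ exception with the two middle nodes adjacent. This is the content of the ``Examination of the Dynkin graphs'' remark preceding the lemma and is a finite check over the ADE diagrams and their automorphism groups listed earlier (from \cite{humphreys}, section 12.2); once granted, everything above is routine linear algebra. One should also note that $(f_{\tilde j},f_{\tilde k})\le 0$ for $\tilde j\ne\tilde k$, so in type~\eqref{case-of-two-adjacent-roots} the orbit sum $(e_j,e_j)$ is genuinely positive, keeping $e_j^\vee$ well-defined.
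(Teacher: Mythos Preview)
Your proof is correct and follows essentially the same route as the paper's: both reduce $e_j^\vee(e_i)=2(e_i,e_j)/(e_j,e_j)$ to a ratio of orbit sums via the projection identity $(e_i,e_j)=(f_{\tilde i},e_j)$, then evaluate the denominator by the orbit-type dichotomy. The only cosmetic difference is that the paper sums over all of $\Gamma$ (so the stabilizer size appears and cancels) while you average over the orbit directly; the computations are otherwise identical.
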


\begin{proof}
${\displaystyle 
e_j^\vee(e_i)=\frac{2(e_j,e_i)}{(e_j,e_j)}=
\frac{2(e_j,f_{\tilde{i}})}{
(e_j,f_{\tilde{j}})}=
\frac{2\left(\sum_{\gamma\in\Gamma}\gamma(f_{\tilde{j}}),f_{\tilde{i}}\right)}
{\left(\sum_{\gamma\in\Gamma}\gamma(f_{\tilde{j}}),f_{\tilde{j}}\right)}.
}$
\\
If the orbit $\Gamma\cdot f_{\tilde{j}}$ is of type
\ref{case-of-non-adjacent-rrots},
the denominator 
$\sum_{\gamma\in\Gamma}\left(\gamma(f_{\tilde{j}}),f_{\tilde{j}}\right)$
is equal to twice the order of the subgroup of $\Gamma$ stabilizing 
$f_{\tilde{j}}$. Thus, $e_j^\vee(e_i)=
\sum_{f_{\tilde{k}}\in\Gamma\cdot f_{\tilde{j}}}(f_{\tilde{k}},f_{\tilde{j}})$.
If the orbit $\Gamma\cdot f_{\tilde{j}}$ is of type
\ref{case-of-two-adjacent-roots}, let $\gamma$ be the non-trivial 
element of $\Gamma$. Then the denominator is equal to 
$(f_{\tilde{j}},f_{\tilde{j}})+(\gamma(f_{\tilde{j}}),f_{\tilde{j}})=2-1=1$.
Furthermore, $\Gamma=\Integers/2\Integers$ acts freely on the
orbit of $f_{\tilde{j}}$.
\end{proof}

The Weyl group $W$,
of the folded root system, is the subgroup of the isometry group 
of $\Lambda_{\bar{r}}$, generated by the reflections with respect to
$e_i$, $1\leq i\leq \bar{r}$. $W$ is finite, since the bilinear pairing
is positive definite.
The set of roots $\Phi$ is the union 
of $W$-orbits of simple roots $\cup_{i=1}^{\bar{r}}W\cdot e_i$. 
We conclude that $\Phi$ is a, possibly non-reduced, root system
(i.e., it satisfies axioms R1, R3, R4 in \cite{humphreys}, section 9.2,
but we have not\footnote{See however 
\cite{humphreys} Section 12.2 Exercise 3, stating that
the only irreducible non-reduced root systems are of type $BC_n$.} 
verified axiom R2, that the only multiples of
$\alpha\in \Phi$, which are also in $\Phi$, are $\pm\alpha$). 
 
Following is the list of folded root systems, which can be found in
\cite{carter}, section 13.3.
If $\widetilde{\Phi}$ is of type $A_r$, $r\geq 2$, $D_r$, $r\geq 5$, 
or $E_6$, then the automorphism group $\Gamma$ of the Dynkin diagram is
$\Integers/2\Integers$. Setting $\Phi$ to be the folded root system, the
types of the pairs $(\widetilde{\Phi},\Phi)$ are: 
$(A_{2n},B_n)$, $(A_{2n-1},C_n)$, $(D_r,B_{r-1})$, and $(E_6,F_4)$. 
When  $\widetilde{\Phi}$ is of type $D_4$, the automorphism group
of the Dynkin diagram is the symmetric group $Sym_3$. 
The folding by a subgroup $\Gamma$ of order $2$ results
in $\Phi$ of type $C_3$. Let us work out the remaining case explicitly.

\begin{example}
\label{example-folding-D-4}
Consider the root system of type $D_4$ with simple roots 
$f_1, f_2, f_3, f_4$, whose Dynkin graph has three edges from $f_2$
to each of the other simple roots. 
Let $\Gamma$ be either the full automorphism group $Sym_3$, 
permuting the roots $\{f_1, f_3, f_4\}$, or its cyclic subgroup of order $3$.
Set $e_1:=\frac{f_1+f_3+f_4}{3}$ and $e_2:=f_2$. 
Then $e^\vee_1=3e_1$ and $e^\vee_2=e_2$. 
The intersection matrix of the root lattice $\mbox{span}_\Integers\{e_1,e_2\}$
is 
$\left((e_i,e_j)\right)=\left(\begin{array}{cc}
2/3 & -1
\\
-1 & 2
\end{array}\right)$
and its Cartan matrix is
$\left((e_i,e^\vee_j)\right)=\left(\begin{array}{cc}
2 & -1
\\
-3 & 2
\end{array}\right)$.
Hence, the folded Dynkin diagram is that of $G_2$.
This example is revisited in section \ref{sec-ogrady-10-dimensional-example}.
\end{example}

%
\section{Galois groups}
\label{sec-Galois-groups}
We prove Theorem 
\ref{thm-G-is-isomorphic-to-product-of-weyl-groups} in this section.
Let $\pi:X\rightarrow Y$ be a symplectic projective resolution of a normal 
projective variety $Y$ of complex dimension $2n$. 
Assume that $X$ is an irreducible holomorphic symplectic manifold. 
We keep the notation of Proposition \ref{prop-dissident-locus}
and Theorem \ref{thm-G-is-isomorphic-to-product-of-weyl-groups}.

\hide{
The above question is related to the Torelli question. 
Let $g$ be a Hodge isometry of
$H^2(X,\Integers)$, which is induced by monodromy. 
Then $g$ acts on $H^{1,1}(X)$, and hence on $H^1(X,T_X)$. 
Let $g^*\X\rightarrow g^{-1}(Def(X))$ be the pull-back 
of the semi-universal deformation of $X$. 
Note that $0$ belongs to $Def(X)\cap g^{-1}(Def(X))$.
Given a subset $A$ of the latter, let $U_A$ be
the complement of $A$ in $Def(X)\cap g^{-1}(Def(X))$.
A version of the Torelli question asks: 
\begin{question}
Is there a closed analytic proper subset $A\subset Def(X)\cap g^{-1}(Def(X))$,
such that the map $g:U_A\rightarrow g(U_A)$ lifts to an isomorphism
$\tilde{g}:g^*\left(\X_{U_A}\right)\rightarrow \X_{g(U_A)}$ 
of the restricted  families?
\end{question}
}

%
\subsection{Two Leray filtrations of $H^2(X,\Integers)$
}
Set $U:=Y\setminus \Sigma_0$ and $\widetilde{U}:=X\setminus\pi^{-1}(\Sigma_0)$.
Let $\phi:\widetilde{U}\rightarrow U$ be the restriction of $\pi$ to 
$\widetilde{U}$. 
We get the commutative diagram of pull-back homomorphisms
\[
\begin{array}{ccc}
H^2(Y,\Integers) & \LongRightArrowOf{\iota_U^*} & H^2(U,\Integers)
\\
\pi^* \ \downarrow \ \hspace{2ex} & & \hspace{2ex} \ \downarrow \ \phi^*
\\
H^2(X,\Integers) & \LongRightArrowOf{\iota_{\widetilde{U}}^*}&
H^2(\widetilde{U},\Integers).

\end{array}
\]

\begin{lem}
\label{lemma-cohomology-of-X-versus-Y}
\hspace{1ex}
\begin{enumerate}
\item
\label{lemma-item-iota-U-tilde-is-an-isom}
The homomorphism 
$\iota_{\widetilde{U}}^*:
H^2(X,\Integers)\rightarrow H^2(\widetilde{U},\Integers)$ is bijective.
\item
\label{lemma-item-pullback-by-phi-is-injective}
The homomorphism $\phi^*:H^2(U,\Integers)\rightarrow 
H^2(\widetilde{U},\Integers)$ is injective.
\item
\label{lemma-item-saturated}
Denote the image of the composition
\[
H^2(U,\Integers)\LongRightArrowOf{\phi^*} 
H^2(\widetilde{U},\Integers)
\LongRightArrowOf{(\iota_{\widetilde{U}}^*)^{-1}} H^2(X,\Integers)
\]
by $H^2(U,\Integers)$ as well.
Then 
$H^2(U,\Integers)$ 
is saturated in $H^2(X,\Integers)$.
\item
\label{lemma-item-corank-of-saturated-sublattice}
Let $\E$ be the set of all irreducible components
of codimension $1$ of the exceptional locus of $\pi$.
The subset $\{[E] \ : \ E\in \E\}$ of $H^2(X,\Integers)$
is linearly independent. $\E$ maps bijectively onto 
a basis of $H^2(X,\RationalNumbers)/H^2(U,\RationalNumbers)$ via
the map $E\mapsto [E]+H^2(U,\RationalNumbers)$.
\end{enumerate}
\end{lem}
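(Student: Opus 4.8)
The plan is to handle the four items essentially in order, using the codimension of $\pi^{-1}(\Sigma_0)$ in $X$ (which is at least $2$, indeed at least $4$ by Proposition \ref{prop-dissident-locus}) together with the structure of $\pi$ over the codimension-$2$ part $\Sigma\setminus\Sigma_0$.

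For item (\ref{lemma-item-iota-U-tilde-is-an-isom}), the key input is that $Z:=\pi^{-1}(\Sigma_0)$ is a closed analytic subset of the smooth compact K\"{a}hler manifold $X$ of complex codimension $\geq 4$, hence of real codimension $\geq 8 > 3$. First I would note that $X\setminus Z$ is connected and that the restriction map $H^k(X,\Integers)\rightarrow H^k(X\setminus Z,\Integers)$ is an isomorphism for $k\leq 2$ (and injective for $k=3$): this follows from the long exact sequence of the pair $(X,X\setminus Z)$ together with the vanishing of the local cohomology groups $H^k_Z(X,\Integers)$ for $k\leq 3$, which in turn follows from the fact that a closed analytic subset of real codimension $\geq 2c$ admits a fundamental system of neighborhoods whose complements are $(2c-2)$-connected; with $c\geq 4$ this gives vanishing through degree $3$. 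Here $\widetilde{U}=X\setminus Z$, so $\iota_{\widetilde{U}}^*$ is bijective on $H^2$.

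For item (\ref{lemma-item-pullback-by-phi-is-injective}), I would use that $\phi:\widetilde{U}\rightarrow U$ is a proper surjective morphism with connected fibers between (possibly singular, but normal) varieties; then $\phi_*\StructureSheaf{\widetilde{U}}=\StructureSheaf{U}$, and more to the point the Leray spectral sequence for $\phi$ with $\Integers$ coefficients gives an exact sequence $0\rightarrow H^1(U,\Integers)\rightarrow H^1(\widetilde{U},\Integers)\rightarrow H^0(U,R^1\phi_*\Integers)\rightarrow H^2(U,\Integers)\xrightarrow{\phi^*} H^2(\widetilde{U},\Integers)$. Thus $\ker(\phi^*)$ is a quotient of $H^0(U,R^1\phi_*\Integers)$; since the fibers of $\phi$ are either points (over $U\setminus\Sigma$) or trees of rational curves (over $\Sigma\setminus\Sigma_0$, by Proposition \ref{prop-dissident-locus}), all of which are simply connected, the sheaf $R^1\phi_*\Integers$ vanishes, so $\phi^*$ is injective.

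For item (\ref{lemma-item-saturated}), I would argue that $H^2(U,\Integers)$, viewed inside $H^2(X,\Integers)$ via the bijection of (\ref{lemma-item-iota-U-tilde-is-an-isom}), equals $\ker\bigl(H^2(X,\Integers)\rightarrow H^2(E,\Integers)\bigr)$ where $E=\pi^{-1}(\Sigma\setminus\Sigma_0)$ is the codimension-$1$ part of the exceptional locus, or more precisely equals the image of $H^2(X,X\setminus E;\Integers)$-complement; cleanly, $H^2(U,\Integers)=\Im\bigl(H^2(X,\Integers)\rightarrow H^2(\widetilde U,\Integers)\bigr)\cap\Im(\phi^*)$ pulled back, and an element $x\in H^2(X,\Integers)$ lies in $H^2(U,\Integers)$ iff $\iota_{\widetilde U}^*(x)$ is in the image of $\phi^*$. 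Saturation then follows because the quotient $H^2(\widetilde U,\Integers)/\phi^*H^2(U,\Integers)$ injects, by the Leray sequence in item (\ref{lemma-item-corank-of-saturated-sublattice})'s analysis, into $H^0(U,R^2\phi_*\Integers)$, which is torsion-free (it is a subsheaf-sections group, the $R^2\phi_*\Integers$ being a local system of free abelian groups on $\Sigma\setminus\Sigma_0$ with stalk the $H^2$ of a tree of $\PP^1$'s). A subgroup whose quotient is torsion-free is saturated.

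For item (\ref{lemma-item-corank-of-saturated-sublattice}), continuing the Leray spectral sequence for $\phi$, the cokernel of $\phi^*:H^2(U,\RationalNumbers)\rightarrow H^2(\widetilde U,\RationalNumbers)$ is, rationally, $H^0(U,R^2\phi_*\RationalNumbers)$ (the differentials into and out of this term vanish rationally because $H^1$ and the relevant $H^3$ of the fibers vanish and by degeneration arguments). Now $R^2\phi_*\RationalNumbers$ is supported on $\Sigma\setminus\Sigma_0$, and over each connected component $B$ its sections are spanned by the classes of the irreducible components of the fibers, i.e. by the cycle classes of the codimension-$1$ components $E_B$ of the exceptional locus lying over $B$ — but only the $\pi_1(B)$-invariant combinations descend to global sections, which matches exactly the set $\E$ of irreducible components of $E$ itself (each such $E\in\E$ restricts over $B$ to one $\pi_1(B)$-orbit of fiber components). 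Pushing forward via $(\iota_{\widetilde U}^*)^{-1}$, I would conclude that $\{[E]:E\in\E\}$ maps onto a spanning set of $H^2(X,\RationalNumbers)/H^2(U,\RationalNumbers)$; linear independence of the $[E]$ follows from the non-degeneracy of the intersection form on the span of the fiber components inside $H^2(\widetilde U)$ (equivalently, from negative-definiteness of the intersection matrix of the exceptional curves, an $ADE$ Cartan-type matrix), which forces the quotient map to be a bijection onto a basis.

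The main obstacle I anticipate is item (\ref{lemma-item-corank-of-saturated-sublattice}): one must be careful that the global sections of $R^2\phi_*\Integers$ over $\Sigma\setminus\Sigma_0$ are genuinely the $\pi_1$-invariants of the fiber cohomology and that these are matched precisely by the irreducible components of $E$ (this is where the folding phenomenon enters), and one must verify the linear independence of the $[E]$ rather than merely that they span — the latter requiring the nondegeneracy of the relevant intersection pairing. Everything else is a fairly standard application of the Leray spectral sequence together with the codimension bound of Proposition \ref{prop-dissident-locus}.
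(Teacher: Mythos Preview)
Your approach via the Leray spectral sequence for $\phi$ matches the paper's closely for items (\ref{lemma-item-pullback-by-phi-is-injective})--(\ref{lemma-item-corank-of-saturated-sublattice}). There is, however, a genuine gap in item (\ref{lemma-item-iota-U-tilde-is-an-isom}): you assert that $Z=\pi^{-1}(\Sigma_0)$ has complex codimension $\geq 4$ in $X$, citing Proposition \ref{prop-dissident-locus}. But that proposition only says $\Sigma_0$ has codimension $\geq 4$ in $Y$; the fibres of $\pi$ over $\Sigma_0$ are positive-dimensional, so this does not pass directly to the preimage. What is true is that $\pi^{-1}(\Sigma_0)$ has complex codimension $\geq 2$, and this requires Kaledin's semi-smallness theorem \cite{kaledin-semi-small}: since $X\times_Y X$ has pure dimension $2n$, the fibre over any point in a stratum of codimension $2k$ has dimension $\leq k$, so over $\Sigma_0$ (codimension $\geq 4$) the fibre dimension is $\leq 2$ and hence $\dim\pi^{-1}(\Sigma_0)\leq 2n-2$. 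With this corrected bound your local-cohomology argument still yields the $H^2$ isomorphism (real codimension $\geq 4$ suffices), but the justification as written is incorrect. The paper handles this step slightly differently, passing through $H_2$ via Lefschetz and Poincar\'e duality and then using simple connectivity of $X$ and $\widetilde{U}$ together with the Universal Coefficients Theorem; your route is fine once the codimension input is fixed.

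One minor remark on item (\ref{lemma-item-corank-of-saturated-sublattice}): the paper gets linear independence of the classes $[E]$ directly from the identification of $H^0(U,R^2_{\phi_*}\Integers)$ with the free abelian group on the $\pi_1(B,b)$-orbits of fibre components, so no separate intersection-form argument is needed. Your appeal to the nondegeneracy of the Cartan-type pairing is correct but an unnecessary detour.
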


\begin{proof}
\ref{lemma-item-iota-U-tilde-is-an-isom})
The inverse image $\widetilde{\Sigma}_0:=\pi^{-1}(\Sigma_0)$ 
has complex codimension $\geq 2$ in $X$, 
since $\Sigma_0$ has codimension $\geq 4$
in $Y$, by Proposition \ref{prop-dissident-locus}, and
$\pi$ is semi-small \cite{kaledin-semi-small}. 
$\widetilde{U}$ is thus simply connected, since $X$ is assumed so.
Furthermore, 
the groups $H^i(\widetilde{\Sigma}_0,\Integers)$ vanish, 
for $i=4n-2$ and $4n-3$. The top horizontal homomorphism in
the following commutative diagram is thus an isomorphism.
\[
\begin{array}{ccc}
H^{4n-2}(X,\widetilde{\Sigma}_0,\Integers)&\longrightarrow &
H^{4n-2}(X,\Integers)
\\
L.D. \ \downarrow \ \hspace{2Ex} & & \hspace{2Ex} \ \downarrow \ P.D.
\\
H_2(\widetilde{U},\Integers)&\LongRightArrowOf{\iota_{\widetilde{U}_*}}&
H_2(X,\Integers)
\end{array}
\]
The right arrow is the Poincare Duality isomorphism and the left is the 
Lefschetz Duality isomorphism 
(see \cite{munkres}, Theorem 70.6, as well as the argument in 
\cite{voisin-book-vol2}, proving equation (1.9) in the proof of Theorem 1.23).
The bottom push-forward homomorphism $\iota_{\widetilde{U}_*}$
is thus an isomorphism as well.
$H^2(X,\Integers)$ is isomorphic to $H_2(X,\Integers)^*$
and $H^2(\widetilde{U},\Integers)$  is isomorphic to 
$H_2(\widetilde{U},\Integers)^*$, by the Universal Coefficients Theorem
and the fact that both spaces are simply connected.
The restriction homomorphism 
$
\iota_{\widetilde{U}}^*:
H^2(X,\Integers)\rightarrow 
H^2(\widetilde{U},\Integers)
$
is thus an isomorphism.

\ref{lemma-item-pullback-by-phi-is-injective})
Associated to $\pi$ we have the canonical filtration $L$ on $H^q(X,\Integers)$
and the Leray spectral sequence $E_r^{p,q}(X)$, converging to 
$H^{p+q}(X,\Integers)$, with $E_2^{p,q}(X)=H^p(Y,R^q_{\pi_*}\Integers)$,
and $E_\infty^{p,q}(X)=Gr_L^{p,q}H^{p+q}(X,\Integers)$ 
(\cite{voisin-book-vol2}, Theorem 4.11).
Similarly we have the Leray spectral sequence
$E_r^{p,q}(\widetilde{U})$ associated to $\phi$ and converging to
$H^{p+q}(\widetilde{U},\Integers)$.
We have the equalities $E_2^{2,0}(X)=H^2(Y,\Integers)$ and
$E_2^{2,0}(\widetilde{U})=H^2(U,\Integers)$,
since $\pi$ has connected fibers, by Zariski's Main Theorem.

The sheaf $R^1_{\phi_*}\Integers$ vanishes, since it is supported
on $\Sigma\setminus\Sigma_0$ and over each connected component
$B$ of the latter, 
$\pi^{-1}(B)\rightarrow B$ is a topological fibration with 
simply connected fibers, by Proposition
\ref{prop-dissident-locus}.
Consequently, 
$E_2^{p,1}(\widetilde{U})$ vanishes, for all $p$, and 
the differentials
$d_2^{p,1}:H^p(U,R^1_{\phi_*}\Integers)\rightarrow H^{p+2}(U,\Integers)$
and
$d_2^{p,2}:H^p(U,R^2_{\phi_*}\Integers)\rightarrow 
H^{p+2}(U,R^1_{\phi_*}\Integers)$
vanish, for all $p$.
We get the equality $E_\infty^{p,q}(\widetilde{U})=E_2^{p,q}(\widetilde{U})$,
for $p+q=2$, and in particular 
$E_\infty^{2,0}(\widetilde{U})=H^2(U,\Integers)$.
The injectivity of $\phi^*$ follows.
%
%

\ref{lemma-item-saturated})
We need to show that $H^2(X,\Integers)/H^2(U,\Integers)$ is torsion free.
It suffices to show that $H^2(\widetilde{U},\Integers)/\phi^*H^2(U,\Integers)$
is torsion free, by part
\ref{lemma-item-iota-U-tilde-is-an-isom}. 
We have seen that $\phi^*H^2(U,\Integers)$ is isomorphic to
$E_\infty^{2,0}(\widetilde{U})$. 
$E_\infty^{1,1}(\widetilde{U})$ vanishes, by the vanishing of 
$E_2^{1,1}(\widetilde{U})$ observed above.  
Hence, it suffices to show that 
$E_{\infty}^{0,2}(\widetilde{U})$ is torsion free. 
Now $E_{\infty}^{0,2}(\widetilde{U})$ is 
isomorphic to $E_2^{0,2}(\widetilde{U}):=H^0(U,R^2_{\phi_*}\Integers)$.
The sheaf $R^2_{\phi_*}\Integers$ is supported 
as a local system over $\Sigma\setminus\Sigma_0$.
Its fiber, over a point $b$ in a 
connected component $B$ of $\Sigma\setminus\Sigma_0$,
is the free abelian group generated by the fundamental classes of the
irreducible components of the fiber $\pi^{-1}(b)$.
In particular, $H^0(U,R^2_{\phi_*}\Integers)$ is torsion-free.

\ref{lemma-item-corank-of-saturated-sublattice})
The fundamental group $\pi_1(B,b)$ permutes the above mentioned 
basis of the fiber of the sheaf $R^2_{\phi_*}\Integers$, 
over the point $b$. Hence, $H^0(U,R^2_{\phi_*}\Integers)$
has a basis consisting of the sum of elements
in each $\pi_1(B,b)$-orbit of \nolinebreak the original
basis elements. These orbits are in one-to-one correspondence
with the irreducible components of the
exceptional divisor of $\phi$. 
\end{proof}

\noindent
{\bf Caution:} We get the inclusions
$\pi^*H^2(Y,\Integers)\subset H^2(U,\Integers)\subset H^2(X,\Integers)$.
The first inclusion may be strict, i.e., the pullback homomorphism 
$\iota_{U}^*:H^2(Y,\Integers)\rightarrow 
H^2(U,\Integers)$ is {\em not} surjective in general.
We thank the referee for pointing out the following counter example. 
Let $\pi:X\rightarrow Y$ be a contraction of a
Lagrangian $\PP^n$, $n\geq 2$, to a point. 
Then $\pi^*H^2(Y,\Integers)\rightarrow H^2(X,\Integers)$ is not surjective,
since its image does not contain any ample line-bundle.
Now $U=\widetilde{U}$, so
$\iota_{U}^*:H^2(Y,\Integers)\rightarrow 
H^2(U,\Integers)$ must fail to be surjective.
The Leray filtration of $H^2(X,\Integers)$ is thus different from 
the image of the Leray filtration of $H^2(\widetilde{U},\Integers)$
via the isomorphism $\iota_{\widetilde{U}}^*$.

%
\subsection{An upper estimate of the Galois group}
Let $L$ be the kernel of the composition
\[
H^2(X,\Integers)\LongIsomRightArrowOf{P.D.} 
H_{4n-2}(X,\Integers)\LongRightArrowOf{\pi_*} H_{4n-2}(Y,\RationalNumbers), 
\]
where $2n:=\dim_\ComplexNumbers(X)$. $L$ is the saturation 
of the sublattice generated by the 
irreducible components of the exceptional divisors of the map
$\pi:X\rightarrow Y$. 

\begin{lem}
\label{lemma-L}
The Beauville-Bogomolov pairing restricts to a negative definite pairing on 
$L$. Furthermore,
$L^\perp$
is equal to the subspace 
$H^2(U,\Integers)$ of $H^2(X,\Integers)$
defined in Lemma \ref{lemma-cohomology-of-X-versus-Y} part
\ref{lemma-item-saturated}.
\end{lem}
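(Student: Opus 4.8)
The plan is to exploit the description of $L$ as the saturation of the span of the exceptional divisor classes, together with the fact, established in Lemma \ref{lemma-cohomology-of-X-versus-Y}, that $H^2(U,\Integers)$ is a saturated sublattice whose rational span is the orthogonal complement (with respect to the intersection-theoretic data encoded by $\pi$) of that same span. First I would verify that $L = L$, i.e.\ that the kernel of $\pi_*\circ \mathrm{P.D.}$ really is the saturation of $\sum_{E\in\E}\Integers[E]$: an element $\alpha\in H^2(X,\Integers)$ maps to zero in $H_{4n-2}(Y,\RationalNumbers)$ exactly when its Poincar\'e dual is supported (rationally) on the exceptional locus, and the codimension-one components $E\in\E$ give, by Lemma \ref{lemma-cohomology-of-X-versus-Y} part \ref{lemma-item-corank-of-saturated-sublattice}, a basis of $H^2(X,\RationalNumbers)/H^2(U,\RationalNumbers)$. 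This simultaneously identifies $L\otimes\RationalNumbers$ with the span of the $[E]$ and shows $L\otimes\RationalNumbers$ is complementary to $H^2(U,\RationalNumbers)$.

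Next I would prove $L^\perp = H^2(U,\Integers)$. Both are saturated sublattices of $H^2(X,\Integers)$ of complementary rank to $L$ (using that the Beauville--Bogomolov form is nondegenerate), so it suffices to show one inclusion, say $H^2(U,\Integers)\subseteq L^\perp$, i.e.\ that $(\beta,[E])=0$ for every $\beta\in H^2(U,\Integers)$ and every $E\in\E$. For this I would use the geometric meaning of the Beauville--Bogomolov pairing against an exceptional divisor: the class $[E]$ is represented by a divisor contracted by $\pi$, hence meets the generic deformation locus appropriately, and a class $\beta$ pulled back from $U$ (equivalently, by Lemma \ref{lemma-cohomology-of-X-versus-Y}, one whose restriction to $\widetilde U$ comes from $U$) has trivial intersection with the fibral curves. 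Concretely, $(\beta, [E])$ can be computed, up to a positive Fujiki-type constant, by integrating against a power of a K\"ahler class and the holomorphic symplectic form; since $\beta$ restricts trivially to a neighborhood of the generic fiber of $\pi$ over a component of $\Sigma\setminus\Sigma_0$ while $[E]$ is supported there, the pairing vanishes. This gives $H^2(U,\Integers)\subseteq L^\perp$, and equality follows from the rank count plus saturation.

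Finally, negative-definiteness of the Beauville--Bogomolov form on $L$: since $L^\perp=H^2(U,\Integers)$ contains a K\"ahler class (indeed $H^{2,0}(X)$ and the pullback of an ample class from $Y$ lie in $H^2(U,\Integers)$ by the Caution paragraph's analysis and Kaledin's Lemma 2.7 as invoked in section \ref{sec-proof}), the form is positive on at least a $(1 + h^{2,0})$-dimensional subspace of $L^\perp$; by the signature $(3, b_2-3)$ of the Beauville--Bogomolov form, its restriction to the complementary space containing $L$ is negative definite, provided $L^\perp$ captures the full positive part. The cleaner route, which I would actually take, is local: each $E\in\E$ lies over a component $B$ of $\Sigma\setminus\Sigma_0$, and on the sublattice spanned by the components of $\pi^{-1}(b)$ the pairing agrees, up to positive scalar, with (minus) the Cartan matrix of an $ADE$ root system restricted to the $\pi_1(B,b)$-folded sublattice, hence is negative definite; since classes over distinct components $B$ are mutually orthogonal and the $[E]$ are, by part \ref{lemma-item-corank-of-saturated-sublattice}, linearly independent, negative-definiteness propagates to all of $L$.

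The main obstacle I anticipate is the second step: cleanly showing $(\beta,[E])=0$ for $\beta\in H^2(U,\Integers)$ rather than merely for $\beta\in\pi^*H^2(Y,\Integers)$ — the Caution paragraph emphasizes that $\pi^*H^2(Y,\Integers)$ can be a proper sublattice of $H^2(U,\Integers)$, so one cannot simply quote projection-formula triviality; one must genuinely use the local-system/Leray description of $H^2(U,\Integers)$ from Lemma \ref{lemma-cohomology-of-X-versus-Y} to localize the intersection computation near the generic fibers. Once that orthogonality is in hand, the saturation and rank bookkeeping are routine.
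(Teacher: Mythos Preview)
Your architecture matches the paper's---show $H^2(U,\Integers)\subset L^\perp$, conclude equality by rank and saturation, and deduce negative definiteness from the signature---but the key orthogonality step is not correctly described. A class $\tilde\beta$ coming from $H^2(U,\Integers)$ does \emph{not} ``restrict trivially'' near a generic fibre; rather, its restriction to $E_i^0$ is pulled back from the base $B_i^0$ via $\phi_i:E_i^0\to B_i^0$. The paper uses Beauville's explicit formula for the pairing (which involves $(\sigma\bar\sigma)^{n-1}$, not a K\"ahler class): since $\sigma=\pi^*\sigma_Y$, the relevant term is
\[
\int_X [E_i]\,\tilde\beta\,(\sigma\bar\sigma)^{n-1}
=\int_{E_i^0}\phi_i^*\!\left([\beta\,(\sigma_Y\bar\sigma_Y)^{n-1}]\big|_{B_i^0}\right),
\]
and this vanishes by a dimension count: $\beta(\sigma_Y\bar\sigma_Y)^{n-1}$ has degree $4n-2$, while $B_i^0$ has real dimension $4n-4$. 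The remaining terms in Beauville's formula pair $[E_i]\in L$ against classes in $\pi^*H^{4n-2}(Y)$ and vanish by the definition of $L$. This dimension argument is the actual mechanism, and it genuinely uses that $\tilde\beta|_{\widetilde U}=\phi^*\beta$; your phrase ``restricts trivially'' does not capture it and, as stated, would apply to an arbitrary $\tilde\beta\in H^2(X)$.

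Separately, your ``cleaner route'' to negative definiteness via Cartan matrices is circular in this paper: the identification of the Beauville--Bogomolov form on the span of the $[E_i]$ with (a negative multiple of) a Cartan matrix is established only in Lemma~\ref{lem-Weyl-group-action}, via Lemma~\ref{lem-galois-reflection-via-flop}, whose Step~4 already invokes the present lemma. The paper uses exactly your first option: $L\perp\pi^*H^2(Y,\RealNumbers)$, the latter contains a positive-definite $3$-plane (the real span of $H^{2,0}\oplus H^{0,2}$ together with a pulled-back ample class), and signature $(3,b_2-3)$ then forces $L$ to be negative definite.
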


\begin{proof}
Choose a class $\sigma\in H^{2,0}(X)$, such that 
$\int_X(\sigma\bar{\sigma})^n=1$. 
There exists a positive real number $\lambda$, such that
\begin{eqnarray*}
2(\alpha,\beta) & =&  
\lambda n\int_X\alpha\beta(\sigma\bar{\sigma})^{n-1}+
\\
& & \lambda(1-n)\left\{
\int_X\alpha\sigma^{n-1}\bar{\sigma}^n\int_X\beta\sigma^n\bar{\sigma}^{n-1}+
\int_X\beta\sigma^{n-1}\bar{\sigma}^n\int_X\alpha\sigma^n\bar{\sigma}^{n-1}
\right\},
\end{eqnarray*}
by \cite{beauville}, Theorem 5.
If $\alpha$ belongs to $L$ and $c$ belongs to $H^{4n-2}(Y)$,
then $\int_X\alpha\pi^*(c)=P.D.(\alpha)\cap\pi^*(c)=\pi_*(P.D.(\alpha))\cap c
=0$. 
There is a class $\sigma_Y\in H^2(Y,\ComplexNumbers)$, such that
$\pi^*(\sigma_Y)$ spans $H^{2,0}(X)$, by Lemma
2.7 of \cite{kaledin-semi-small}.
Assume that $\beta$ belongs to $\pi^*H^2(Y)$.
The classes $\sigma^n\bar{\sigma}^{n-1}$, $\sigma^{n-1}\bar{\sigma}^n$,
and $\beta(\sigma\bar{\sigma})^{n-1}$, all belong to $\pi^*H^{4n-2}(Y)$. 
Thus $(\alpha,\beta)=0$, for all $\alpha\in L$. We conclude that 
$L$ is orthogonal to $\pi^*H^2(Y,\Integers)$.

The subspace $\pi^*H^2(Y,\RealNumbers)$ of 
$H^2(X,\RealNumbers)$ contains a positive definite
three-dimensional subspace spanned by the real part of
$H^{2,0}(X)\oplus H^{0,2}(X)\oplus \RealNumbers\pi^*\alpha$,
where $\alpha\in H^2(Y,\Integers)$ is the class of an ample line bundle.
$L$ is negative definite, since the Beauville-Bogomolov pairing 
on $H^2(X,\RealNumbers)$ has signature $(3,b_2(X)-3)$. 

We prove next the inclusion 
$H^2(U,\Integers)\subset L^\perp$.
Let $\tilde{\beta}\in H^2(X,\Integers)$ be a class, 
which belongs to the image of $H^2(U,\Integers)$. 
Then $\iota_{\widetilde{U}}^*(\tilde{\beta})=\phi^*(\beta)$, 
for a unique class $\beta\in H^2(U,\Integers)$. 
Let $\alpha\in H^2(X,\Integers)$ be the class Poincare-dual 
to an irreducible component $E_i$, of the
exceptional locus of $\pi$, of codimension one in $X$.
Set $B_i:=\pi(E_i)$, $B_i^0:=B_i\cap U$, and  
$E_i^0:=E_i\cap \widetilde{U}$. Then $E_i^0$
is a fibration $\phi_i:E_i^0\rightarrow B_i^0$ over $B_i^0$, 
with pure one-dimensional fibers, 
by Proposition \ref{prop-dissident-locus}. 
We get
\[
\int_X\alpha\tilde{\beta}\pi^*(\sigma_Y\bar{\sigma}_Y)^{n-1}=
\int_{E_i}\tilde{\beta}\pi^*(\sigma_Y\bar{\sigma}_Y)^{n-1}=
\int_{E_i^0}\phi_i^*\left([\beta(\sigma_Y\bar{\sigma}_Y)^{n-1})
\restricted{]}{B_i^0}\right).
\]
The above integral vanishes, since 
the restriction of the $2n-2$ form 
$\beta(\sigma_Y\bar{\sigma}_Y)^{n-1}$
to the $(2n-4)$-dimensional $B_i^0$ vanishes.
The vanishing of $(\alpha,\tilde{\beta})$ follows.
We conclude the inclusion
$H^2(U,\Integers)\subset L^\perp$.

The lattices $H^2(U,\Integers)$ and $L^\perp$ have the same rank,
by Lemma \ref{lemma-cohomology-of-X-versus-Y} part
\ref{lemma-item-corank-of-saturated-sublattice}. 
The equality $H^2(U,\Integers)= L^\perp$ follows, since 
$H^2(U,\Integers)$ is saturated in $H^2(X,\Integers)$, 
by Lemma \ref{lemma-cohomology-of-X-versus-Y} part \ref{lemma-item-saturated}.
\end{proof}

Let $\B$ be the set of connected components of $\Sigma\setminus\Sigma_0$. 
Let $\Lambda_B$, $B\in \B$, be the sublattice of 
$H^2(X,\Integers)$ 
spanned by the classes of the irreducible components of $\pi^{-1}(B)$.  
Let $\E_B$ be the set of irreducible components of the exceptional locus of
$\pi:X\rightarrow Y$, of pure co-dimension $1$, which dominate $B$.
Given $i\in \E_B$, denote by $E_i\subset X$ the corresponding divisor and let 
$e_i\in H^2(X,\Integers)$ be the class Poincare-dual to $E_i$. 
Set $E_i^0:=E_i\setminus \pi^{-1}\Sigma_0$. 
The fiber of $E_i^0\rightarrow B$ over a point $b\in B$ is a union of
smooth irreducible rational curves, which are all homologous in $X$. 
Denote by $e_i^\vee$ the class in $H^{4n-2}(X,\Integers)$
of such a rational curve. Consider $e_i^\vee$ also as a class in 
$H^2(X,\Integers)^*$, via Poincare-duality.

\begin{lem}
\label{lemma-L-B-is-G-invariant}
\begin{enumerate}
\item
\label{lemma-item-Lambda-B-is-G-invariant}
$\Lambda_B$ is a $G$-invariant sublattice of $H^2(X,\Integers)$. 
\item
\label{lemma-item-correspondence-inducing-g}
For every element $g\in G$, there exists an algebraic correspondence
$Z_g$ in $X\times_YX$, of pure dimension $2n$, inducing
the action of $g$ on $H^2(X,\Integers)$.
\item
\label{lemma-item-class-of-Z-g}
The endomorphism $[Z_g]_*:H^2(X,\Integers)\rightarrow H^2(X,\Integers)$,
induced by the correspondence $Z_g$, has the form
\[
[Z_g]_* \ \ \ = \ \ \ 
id + \sum_{B\in\B} \ \ \sum_{i,j\in\E_B} a_{ij}e_i\otimes e_j^\vee,
\]
for some non-negative integers $a_{ij}$.
\end{enumerate}
\end{lem}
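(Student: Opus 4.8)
The plan is to prove the three parts more or less in reverse order of logical dependence, deriving (\ref{lemma-item-correspondence-inducing-g}) first from the Galois structure established in Lemma \ref{lemma-galois-cover}, then reading off the matrix form (\ref{lemma-item-class-of-Z-g}) from Kaledin's semi-smallness together with the geometry of the exceptional locus, and finally deducing the $G$-invariance of $\Lambda_B$ in (\ref{lemma-item-Lambda-B-is-G-invariant}) as a formal consequence.

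For part (\ref{lemma-item-correspondence-inducing-g}): fix $g\in G$. By the proof of Lemma \ref{lemma-galois-cover}, $g$ acts on $Def(X)$ fixing $0$, and for generic $t\in V$ the Hodge isometry $g$ induces on $H^*(X,\Integers)$ is realized by the composition $X_{g^{-1}(t)}\stackrel{\nu_{g^{-1}(t)}}{\to}Y_{f(t)}\stackrel{\nu_t^{-1}}{\dashrightarrow}X_t$, i.e.\ by the birational map $X_{g^{-1}(t)}\dashrightarrow X_t$ over $Y_{f(t)}$. Letting $t\to 0$, the closure in $X\times_Y X$ of the graph of this birational self-map of $X$ over $Y$ is an algebraic cycle $Z_g$ whose components all dominate $Y$; since $\pi$ is semi-small, $X\times_Y X$ has pure dimension $2n$ (\cite{kaledin-semi-small}), so $Z_g$ is a cycle of pure dimension $2n$. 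The correspondence $[Z_g]_*$ acts on $H^2$ as $g$, by continuity of the cohomological action in a family together with $\tilde\varphi_0=\mathrm{id}$; this is the degeneration argument already used to prove part (\ref{item-Hodge-isemetry}) of Lemma \ref{lemma-galois-cover}. The main obstacle, and the step requiring care, is making this degeneration argument precise: one must check that the flat limit of the graphs does not pick up fibral components that alter the action on $H^2$, or — more robustly — that whatever fibral components appear contribute only classes supported on $\pi^{-1}(\Sigma)$, which is exactly what the matrix form in the next part encodes. I would phrase this as: $Z_g$ may be taken to be the diagonal $\Delta_X$ plus a cycle supported on $\pi^{-1}(\Sigma)\times_\Sigma \pi^{-1}(\Sigma)$, since away from $\Sigma$ the map $\pi$ is an isomorphism and $g$ is the identity there.

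For part (\ref{lemma-item-class-of-Z-g}): write $Z_g=\Delta_X+Z'$ with $Z'$ supported on $\pi^{-1}(\Sigma)\times_\Sigma\pi^{-1}(\Sigma)$, hence, after discarding components that meet the dissident locus $\Sigma_0$ (which contribute classes of codimension $\geq 2$ in each factor by Proposition \ref{prop-dissident-locus} and semi-smallness, and so act as $0$ on $H^2$), supported on $\bigsqcup_{B\in\B}E_B\times_B E_B$. Each $2n$-dimensional component of $E_B\times_B E_B$ is dominant over $B$ with generic fiber of dimension $2$, which (since $\pi^{-1}(b)$ is a tree of rational curves) is a product of two of the rational curves dual to the $e_i$, $i\in\E_B$; thus its class in $H^{4n-4}(X\times X)$, pushed into $\mathrm{End}\,H^2(X,\Integers)$, is a nonnegative integer multiple of $e_i\otimes e_j^\vee$ for some $i,j\in\E_B$. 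Summing gives the asserted form $[Z_g]_*=\mathrm{id}+\sum_B\sum_{i,j\in\E_B}a_{ij}e_i\otimes e_j^\vee$; nonnegativity of the $a_{ij}$ is automatic since they are intersection multiplicities of effective cycles.

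For part (\ref{lemma-item-Lambda-B-is-G-invariant}): by Lemma \ref{lemma-cohomology-of-X-versus-Y}(\ref{lemma-item-corank-of-saturated-sublattice}) the classes $e_i$, $i\in\E_B$, span $\Lambda_B\otimes\RationalNumbers$ (after noting $\Lambda_B$ is generated by the components of $\pi^{-1}(B)$, each homologous to a positive combination of the $E_i$ with $i\in\E_B$), so it suffices to show $g(e_i)\in\bigoplus_{B}\Lambda_B$ for each $i$, and in fact $g(e_i)\in\Lambda_B$ when $i\in\E_B$. From part (\ref{lemma-item-class-of-Z-g}), $g(e_i)=[Z_g]_*(e_i)=e_i+\sum_{B'}\sum_{k,l\in\E_{B'}}a_{kl}\,e_k\,(e_l^\vee,e_i)$; the pairing $(e_l^\vee,e_i)$ is the intersection number in $X$ of a fibral rational curve over $B'$ with the divisor $E_i$, which vanishes unless $E_i$ meets $\pi^{-1}(B')$, i.e.\ unless $B'=B$. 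Hence $g(e_i)=e_i+\sum_{k\in\E_B}(\text{integer})e_k\in\Lambda_B$, and $\Lambda_B$ is $G$-invariant.
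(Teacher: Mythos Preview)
Your approach is essentially the same as the paper's: construct $Z_g$ as the specialization of the graphs of the isomorphisms $X_t\cong X_{g(t)}$, observe it lies in $X\times_Y X$ and hence has pure dimension $2n$, then decompose it as the diagonal plus components over the exceptional locus to obtain the matrix form, from which the $G$-invariance of $\Lambda_B$ is immediate. The one place where the paper is more careful than your sketch is the construction of $Z_g$ itself: your phrase ``the closure in $X\times_Y X$ of the graph of this birational self-map of $X$ over $Y$'' is not well-posed (there is no map of $X$ to itself here), and the paper instead chooses a smooth curve $C\subset Def(X)$ through $0$ with $C\setminus\{0\}\subset V$, takes the closure $\Gamma$ of the graph of $\tilde g$ in $\X_C\times_C (g^*\X)_C$, and sets $Z_g:=\Gamma\cap(X\times X)$; irreducibility of $\Gamma$ plus upper-semicontinuity gives pure dimension $2n$, hence flatness of $\Gamma\to C$, and then $[Z_g]_*=\gamma_g$ follows by continuity exactly as you indicate.
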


\begin{proof}
Part \ref{lemma-item-Lambda-B-is-G-invariant} follows from 
part \ref{lemma-item-class-of-Z-g}.
We proceed to prove part \ref{lemma-item-correspondence-inducing-g}.
Fix an element $g\in G$. Let $C\subset Def(X)$ be a smooth 
connected Riemann surface, containing $0$, such that $C\setminus \{0\}$
is contained in the open subset $V$ introduced in Diagram 
(\ref{main-diagram}). Set $C^0:=C\setminus \{0\}$. 
Given a subset $S$ of $Def(X)$, let $\X_S$ be the restriction of $\X$ to $S$
and $\Y_S$ the restriction of $f^*\Y$ to $S$.
The morphism $\psi : \X_V\rightarrow V$ is $G$-equivariant with respect to
the $G$-action on $\X_V$ induced by the isomorphism
$\tilde{\nu}:\X_V\rightarrow \Y_V$. 
We get the isomorphism $\tilde{g}:\X_{C^0}\rightarrow (g^*\X)_{C^0}$. 
Let $\Gamma$ be the closure of the graph of the isomorphism $\tilde{g}$
in $\X_C\times_{C}(g^*\X)_C$. Then $\Gamma$ is contained in the inverse, via 
$\tilde{\nu}\times\tilde{\nu}$ of the diagonal in $\Y_C\times_C\Y_C$.
Hence, the fiber 
$Z_g:=\Gamma\cap [X\times X]$ is contained in $X\times_YX$. 
We consider $Z_g$ as a subscheme of $X\times X$.
The fiber $Z_g$ is of pure dimension $2n$, by the
upper-semi-continuity of fiber dimension, and the irreducibility of $\Gamma$.
Hence, the morphism $\Gamma\rightarrow C$ is flat.
The class $[Z_g]\in H^{4n}(X\times X)$  is thus the limit of the classes 
of the graph $\Gamma_t$ of the isomorphism 
$\tilde{g}_t:X_t\rightarrow X_{g(t)}$, $t\in C^0$. 
The limiting action $[Z_g]_*:H^2(X,\Integers)\rightarrow H^2(X,\Integers)$
is precisely the monodromy operator
$\gamma_g$, given in diagram (\ref{eq-gamma-g}).

Part \ref{lemma-item-class-of-Z-g} follows from Proposition
\ref{prop-dissident-locus} and the above construction of $Z_g$.
\end{proof}

\hide{
An isometry $g$ of 
$H^2(X,\Integers)$ is called a {\em monodromy operator}, 
if there exists a 
family $\X \rightarrow T$ (which may depend on $g$) 
of irreducible holomorphic symplectic manifolds, having $X$ as a fiber
over a point $t_0\in T$, 
and such that $g$ belongs to the image of $\pi_1(T,t_0)$ under
the monodromy representation. 
The {\em monodromy group} $Mon^2(X)$ of $X$ is the subgroup 
of $O[H^2(X,\Integers)]$ generated by all the monodromy operators. 
$Mon^2(X)$ was explicitly calculated in case 
$X$ is deformation equivalent to the Hilbert scheme $S^{[n]}$
of length $n$ subschmes of a $K3$ surface $S$
\cite{markman-constraints}, Theorem 1.2. 
}

Let $G_L\subset O[H^2(X,\Integers)]$ be the subgroup, 
which leaves invariant each of the sublattices $\Lambda_B$, $B\in \B$,  
of $H^2(X,\Integers)$, and 
acts as the identity on the sublattice $L^\perp$  orthogonal to $L$. 

\begin{prop}
\label{prop-G-embedds-in-G-L}
\begin{enumerate}
\item
\label{cor-item-G-L-is-finite}
$G_L$ is a finite subgroup, 
which contains the image of the Galois group $G$, via the injective 
homomorphism 
$G\rightarrow O[H^2(X,\Integers)]$ constructed in Lemma
\ref{lemma-galois-cover}.
\item
\label{G-is-a-pr-duct-of-Weyl-groups}
The group $G$ is isomorphic to a product of Weyl groups associated to 
root systems of finite type.
\end{enumerate}
\end{prop}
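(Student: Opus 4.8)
The plan is to derive part (\ref{cor-item-G-L-is-finite}) formally from the results already in hand, and to reduce part (\ref{G-is-a-pr-duct-of-Weyl-groups}) to the construction of reflections inside $G$, which is the substantive step. For part (\ref{cor-item-G-L-is-finite}), first note that $G_L$ is finite: any $g\in G_L$ fixes $L^\perp$ pointwise, hence preserves the $\RationalNumbers$-span of $L$ and, being an isometry of the saturated sublattice $L$, restricts to an element of $O(L)$; since $L$ is negative definite by Lemma \ref{lemma-L}, $O(L)$ is finite. For the inclusion $\gamma(G)\subseteq G_L$, recall that $\gamma\colon G\to O[H^2(X,\Integers)]$ is injective (Lemma \ref{lemma-galois-cover}), that each $\Lambda_B$ is $\gamma(G)$-invariant (Lemma \ref{lemma-L-B-is-G-invariant}(\ref{lemma-item-Lambda-B-is-G-invariant})), and that $\gamma_g=[Z_g]_*=id+\sum_{B\in\B}\sum_{i,j\in\E_B}a_{ij}\,e_i\otimes e_j^\vee$ with $a_{ij}\geq 0$ (Lemma \ref{lemma-L-B-is-G-invariant}(\ref{lemma-item-class-of-Z-g})). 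It remains to see that $\gamma_g$ is the identity on $L^\perp$. By Lemma \ref{lemma-L} one has $L^\perp=H^2(U,\Integers)$, so for $\beta\in L^\perp$ the restriction $\iota_{\widetilde{U}}^*\beta$ is of the form $\phi^*\beta_0$ (Lemma \ref{lemma-cohomology-of-X-versus-Y}); since the rational curve whose class is $e_j^\vee$ lies in a fibre of $\phi$, the projection formula gives $e_j^\vee(\beta)=0$, and hence $\gamma_g(\beta)=\beta$. Thus $\gamma(G)\subseteq G_L$.

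For part (\ref{G-is-a-pr-duct-of-Weyl-groups}), I would first reduce to a statement about each component $B$. By part (\ref{cor-item-G-L-is-finite}), $G$ is a finite group acting faithfully on $H^2(X,\Integers)$, trivially on $L^\perp$, and preserving the negative definite lattice $L$ together with its decomposition $L\otimes\RationalNumbers=\bigoplus_{B\in\B}\Lambda_B\otimes\RationalNumbers$, where $\Lambda_B\otimes\RationalNumbers=\bigoplus_{i\in\E_B}\RationalNumbers e_i$ and $\E=\bigsqcup_{B}\E_B$ by Proposition \ref{prop-dissident-locus} and the semi-smallness of $\pi$. Hence $\gamma$ induces an injection $G\hookrightarrow\prod_{B\in\B}G_B$, where $G_B\subseteq O(\Lambda_B)$ is the image of $G$. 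It then suffices to prove that each $G_B$ equals the Weyl group $W_B$ of the folded root system of section \ref{sec-folding} — a Weyl group of finite type, since $\Lambda_B$ is negative definite — and that this injection is onto.

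The inclusion $W_B\subseteq G_B$ I would obtain by constructing, for each $i\in\E_B$, an involution $\rho_i\in G$ that acts on $\Lambda_B$ as the reflection in $e_i$ and trivially on $\Lambda_{B'}$ for $B'\neq B$. This uses the birational geometry of $\pi$: since $Y$ is transversally of $ADE$ type along $B$ (Proposition \ref{prop-dissident-locus}), one flops $X$ over $Y$ along $E_i$ and identifies the resulting Picard--Lefschetz monodromy with a deck transformation of $f$; this is the content of Lemma \ref{lem-galois-reflection-via-flop} (with a separate treatment when the folded type is $A_{2k}$, since then the two central components of $\pi^{-1}(b)$ are exchanged by $\pi_1(B,b)$). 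The reverse inclusion $G_B\subseteq W_B$ follows from the explicit shape $id+\sum a_{ij}e_i\otimes e_j^\vee$ with $a_{ij}\geq 0$, together with the identification of the matrix $(e_j^\vee(e_i))$ with the Cartan matrix of the folded root system (Lemma \ref{lemma-folding}), which forces $G_B$ into the reflection subgroup $W_B\subseteq O(\Lambda_B)$ and identifies the $\rho_i$ with its simple reflections. Finally, since each $\rho_i$ is trivial on $\Lambda_{B'}$ for $B'\neq B$, the $\rho_i$ together generate a subgroup of $G$ mapping onto $\prod_B W_B$; so the injection $G\hookrightarrow\prod_B G_B$ is an isomorphism, giving $G\cong\prod_{B\in\B}W_B$, which is in particular a product of finite-type Weyl groups, and establishing Theorem \ref{thm-G-is-isomorphic-to-product-of-weyl-groups} as well.

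The main obstacle is the construction of the reflections $\rho_i\in G$. Unlike part (\ref{cor-item-G-L-is-finite}), it is not visible from the Hodge-theoretic and deformation-theoretic picture of Lemma \ref{lemma-galois-cover}: it requires exhibiting, for each exceptional divisor $E_i$, an explicit flop of $X$ over $Y$ and matching its monodromy with an element of the Galois group, and the folded $A_{2k}$ case needs a further, more intricate argument.
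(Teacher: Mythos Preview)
Your argument for part~(\ref{cor-item-G-L-is-finite}) is essentially the paper's: finiteness via the negative-definiteness of $L$, $\Lambda_B$-invariance from Lemma~\ref{lemma-L-B-is-G-invariant}, and triviality on $L^\perp=H^2(U,\Integers)$ from the explicit shape $[Z_g]_*=id+\sum a_{ij}e_i\otimes e_j^\vee$ together with $e_j^\vee(\beta)=0$ for $\beta\in H^2(U,\Integers)$.

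For part~(\ref{G-is-a-pr-duct-of-Weyl-groups}) you take a genuinely different route from the paper. The paper's proof is a two-line abstract argument: since $Def(Y)=Def(X)/G$ is smooth (Theorem~\ref{thm-namikawa}), the quotient $H^{1,1}(X)/G$ is smooth, hence so is $L_\ComplexNumbers/G$; by Chevalley--Shephard--Todd (\cite{bourbaki}, Ch.~V, \S5, Th\'eor\`eme~4), $G$ is a finite complex reflection group, and by the classification of those preserving a lattice (\cite{bourbaki}, Ch.~VI, \S2, Proposition~9) it is a product of Weyl groups of finite type. No reflection is ever constructed, and the paper explicitly warns (in the Caution after the Proposition) that at this stage the Weyl group factors are \emph{not} identified with the $W_B$, nor is $G$ even claimed to be nontrivial. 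Your approach instead proves the stronger Theorem~\ref{thm-G-is-isomorphic-to-product-of-weyl-groups} outright, by constructing the reflections via flops (Lemma~\ref{lem-galois-reflection-via-flop}) and bounding $G_B$ from above. That works, but it imports the substantial geometric input of \S4.5--4.6 into what the paper designs as a soft preliminary bound; the payoff of the paper's approach is that Proposition~\ref{prop-G-embedds-in-G-L} becomes available \emph{before} any flop is performed, and in particular the reflection-group structure of $G$ is known in advance of the hard work.
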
 

\noindent
{\bf Caution:} Part \ref{G-is-a-pr-duct-of-Weyl-groups} of the proposition does
not relate the Weyl group factors of $G$ to the Weyl groups $W_B$, $B\in\B$,
in Theorem \ref{thm-G-is-isomorphic-to-product-of-weyl-groups}. 
At this point we do not claim even the non-triviality of $G$.

\begin{proof}
\ref{cor-item-G-L-is-finite})
Lemma \ref{lemma-L} and 
part \ref{lemma-item-Lambda-B-is-G-invariant} of Lemma 
\ref{lemma-L-B-is-G-invariant} reduce the proof 
to showing that $\gamma_g$ acts as the identity on
the subgroup $H^2(U,\Integers)$ of $H^2(X,\Integers)$, for all $g\in G$. 
The action of $\gamma_g$ on $H^2(U,\Integers)$ is trivial, by
part \ref{lemma-item-class-of-Z-g} of Lemma \ref{lemma-L-B-is-G-invariant}. 
Indeed, let $Z_g\subset X\times_YX$ 
be the corresondence, of pure-dimension $2n$, inducing the action of 
$\gamma_g$. Let $\beta\in H^2(X,\Integers)$ be a class in the image of 
$H^2(U,\Integers)$. Using the notation of Lemma \ref{lemma-L-B-is-G-invariant}
we have $e_i^\vee(\beta)=0$, for all $i\in \cup_{B\in\B}\E_B$.
Hence, $[Z_g]_*(\beta)=\beta$.

\ref{G-is-a-pr-duct-of-Weyl-groups})
The quotient $H^{1,1}(X,\ComplexNumbers)/G$ is smooth, by Lemma 
\ref{lemma-galois-cover} and Theorem \ref{thm-namikawa}.
$H^{1,1}(X,\ComplexNumbers)$ decomposes as the direct sum 
of $L_\ComplexNumbers:=L\otimes_\Integers\ComplexNumbers$ and 
$L_\ComplexNumbers^\perp \cap H^{1,1}(X,\ComplexNumbers)$,
since $L$ is negative-definite, by Lemma
\ref{lemma-L}. 
$G$ acts trivially on $L_\ComplexNumbers^\perp$, and hence 
$L_\ComplexNumbers/G$ is smooth as well. $G$ is thus a finite complex
reflection group, by 
\cite{bourbaki}, Ch. V, section 5, Theorem 4, and 
the statement follows from the classification 
of complex reflection groups preserving a lattice 
\cite{bourbaki}, Ch. VI, section 2, Proposition 9.
\end{proof}

Consider the case where $\pi:X\rightarrow Y$ is a small contraction.
Then $L=(0)$, both $G_L$ and $G$ are trivial, and $f:Def(X)\rightarrow Def(Y)$
is an isomorphism. The latter fact is already a consequence of a 
general result of Namikawa (see Proposition 
\ref{prop-namikawa-commutative-diagram} below). 

%
\subsection{The differential $df_0$}
Recall that the infinitesimal deformations of $Y$ are given by the
group $\Ext^1(\Omega^1_Y,\StructureSheaf{Y})$.
The morphism $\pi:X\rightarrow Y$ induces a natural morphism
$\pi_*:H^1(X,TX)\rightarrow \Ext^1(\Omega^1_Y,\StructureSheaf{Y})$,
which coincides with the differential $df_0$ of $f:Def(X)\rightarrow Def(Y)$
at $0$. Given $\xi\in H^1(X,TX)$, the class $\pi_*(\xi)$
is identified as follows.
Let $0\rightarrow \StructureSheaf{X}\rightarrow F \rightarrow \Omega^1_X
\rightarrow 0$ be a short exact sequence with extension class $\xi$.
We get the short exact sequence
\[
0\rightarrow \StructureSheaf{Y}\rightarrow \pi_*F \rightarrow \pi_*\Omega^1_X
\rightarrow 0,
\]
since $R^1_{\pi_*}(\StructureSheaf{X})$ vanishes, 
as $Y$ has rational singularities. 
Pulling back via $\pi^*:\Omega^1_Y\rightarrow \pi_*\Omega^1_X$,
we get the extension with class $\pi_*(\xi)$.

Let $\Sigma$ be the singular locus of $Y$ and 
$\Sigma_0$ the dissident locus, as in Proposition
\ref{prop-dissident-locus}.
Set $U:=Y\setminus \Sigma_0$ and $\widetilde{U}:=\pi^{-1}(U)$.

\begin{prop}
\label{prop-namikawa-commutative-diagram}
(\cite{namikawa-deformations}, Proposition 2.1) 
There is a commutative diagram
\begin{equation}
\label{namikawas-diagram}
\begin{array}{ccc}
H^1(X,TX)&\LongIsomRightArrow& H^1(\widetilde{U},T\widetilde{U})
\\
\pi_* \ \downarrow \ \hspace{2ex} & & \downarrow 
\\
\Ext^1(\Omega^1_Y,\StructureSheaf{Y})&\LongIsomRightArrow&
\Ext^1(\Omega^1_U,\StructureSheaf{U}),
\end{array}
\end{equation}
where the restriction horizontal maps are both isomorphisms.
\end{prop}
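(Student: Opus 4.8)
The plan is to establish the two horizontal isomorphisms separately and then check that the square commutes by tracing an extension class through both routes. First I would treat the top arrow $H^1(X,TX)\to H^1(\widetilde U,T\widetilde U)$. Since $X$ is holomorphic symplectic, the symplectic form $\sigma$ gives an isomorphism $T X\cong \Omega^1_X$, so $H^1(X,TX)\cong H^1(X,\Omega^1_X)$, which sits inside $H^2(X,\ComplexNumbers)$ by the Hodge decomposition; similarly $H^1(\widetilde U,T\widetilde U)\cong H^1(\widetilde U,\Omega^1_{\widetilde U})$. The restriction map then becomes the restriction $H^2(X,\ComplexNumbers)\to H^2(\widetilde U,\ComplexNumbers)$ on the $(1,1)$-part, and this is an isomorphism because $\iota_{\widetilde U}^*:H^2(X,\Integers)\to H^2(\widetilde U,\Integers)$ is bijective by Lemma \ref{lemma-cohomology-of-X-versus-Y} part \ref{lemma-item-iota-U-tilde-is-an-isom} (using that $\widetilde\Sigma_0=\pi^{-1}(\Sigma_0)$ has complex codimension $\geq 2$, by Proposition \ref{prop-dissident-locus} and semismallness of $\pi$), together with the fact that restriction is a morphism of Hodge structures. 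One must be slightly careful that $T\widetilde U\cong \Omega^1_{\widetilde U}$ via the restricted symplectic form and that the restriction commutes with these identifications, which it does since $\sigma$ restricts to the symplectic form on $\widetilde U$.

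Next I would treat the bottom arrow $\Ext^1(\Omega^1_Y,\StructureSheaf Y)\to \Ext^1(\Omega^1_U,\StructureSheaf U)$. The strategy is to compare the local-to-global spectral sequences for both $\Ext$ groups and exploit that $U=Y\setminus\Sigma_0$ removes only a set of codimension $\geq 4$. For the global part, $H^1(Y,\SheafHom(\Omega^1_Y,\StructureSheaf Y))\to H^1(U,\SheafHom(\Omega^1_U,\StructureSheaf U))$: since $Y$ is normal and $\Sigma_0$ has codimension $\geq 4$ while $\SheafHom(\Omega^1_Y,\StructureSheaf Y)=T_Y$ is reflexive, restriction to $U$ induces an isomorphism on $H^0$ and $H^1$ (a coherent reflexive sheaf on a normal variety has depth $\geq 2$, and removing a codimension $\geq 4$ — in particular $\geq 2$ — set does not change low cohomology; one should phrase this via the local cohomology sheaves $\underline H^0_{\Sigma_0}$ and $\underline H^1_{\Sigma_0}$ vanishing). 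For the sheafy part, $H^0(Y,\SheafExt^1(\Omega^1_Y,\StructureSheaf Y))\to H^0(U,\SheafExt^1(\Omega^1_U,\StructureSheaf U))$: the sheaf $\SheafExt^1(\Omega^1_Y,\StructureSheaf Y)$ is supported on the singular locus $\Sigma$, and the claim is that none of its support lies in $\Sigma_0$ in a way that contributes — more precisely one uses that $\SheafExt^1$ is supported on $\Sigma$ and restriction to $U$ loses only the part supported on $\Sigma_0$, which has codimension $\geq 4$, so the sections over $Y$ and over $U$ agree (the sheaf has no associated points on $\Sigma_0$, or one invokes that $Y$ has canonical, hence rational Gorenstein, singularities so the relevant depth estimates hold). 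Assembling the two pieces through the five-term exact sequence of the spectral sequence gives the isomorphism on $\Ext^1$.

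Finally I would verify commutativity. Take $\xi\in H^1(X,TX)$ with extension $0\to\StructureSheaf X\to F\to\Omega^1_X\to 0$. The construction of $\pi_*(\xi)$ described just before the proposition — push forward, using $R^1\pi_*\StructureSheaf X=0$, then pull back along $\pi^*:\Omega^1_Y\to\pi_*\Omega^1_X$ — is manifestly compatible with restriction to the open sets $U\subset Y$, $\widetilde U\subset X$, since $\pi^{-1}(U)=\widetilde U$ and all the sheaf operations localize; so the square commutes already at the level of the constructed extension classes, and then one only needs that the two horizontal restriction maps are the ones identified in the previous two paragraphs. Since we are allowed to quote this proposition as being from \cite{namikawa-deformations}, the proof can be kept brief, essentially reducing everything to the codimension bound on $\Sigma_0$ from Proposition \ref{prop-dissident-locus} and the bijectivity in Lemma \ref{lemma-cohomology-of-X-versus-Y}. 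The main obstacle I anticipate is the bottom-row isomorphism: one has to handle the $\SheafExt^1$ contribution carefully, making sure that passing from $Y$ to $U$ does not alter $H^0(\SheafExt^1(\Omega^1,\StructureSheaf{}))$, which is where the full strength of "$\Sigma_0$ has codimension $\geq 4$" (rather than merely $\geq 2$) and the canonical-singularities hypothesis enter.
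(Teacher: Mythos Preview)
The paper does not give its own proof of this proposition; it is quoted verbatim from \cite{namikawa-deformations}, Proposition 2.1, and used as a black box. So there is nothing in the paper to compare your sketch against directly. That said, your sketch has a genuine gap worth flagging.

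For the top row you identify $H^1(X,TX)\cong H^1(X,\Omega^1_X)$ and $H^1(\widetilde U,T\widetilde U)\cong H^1(\widetilde U,\Omega^1_{\widetilde U})$ via the symplectic form, and then claim that the restriction map ``becomes the restriction $H^2(X,\ComplexNumbers)\to H^2(\widetilde U,\ComplexNumbers)$ on the $(1,1)$-part''. This step is not justified: $\widetilde U$ is non-compact, so there is no Hodge decomposition identifying $H^1(\widetilde U,\Omega^1_{\widetilde U})$ with a summand of $H^2(\widetilde U,\ComplexNumbers)$. Lemma \ref{lemma-cohomology-of-X-versus-Y} gives you the isomorphism on \emph{singular} $H^2$, but coherent cohomology of $\Omega^1$ on an open subset can be much larger (a priori even infinite-dimensional). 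Via local cohomology, the codimension bound $\mathrm{codim}\,\widetilde\Sigma_0\geq 2$ (which is all that semismallness yields) gives $H^0_{\widetilde\Sigma_0}=H^1_{\widetilde\Sigma_0}=0$ for a locally free sheaf, hence only \emph{injectivity} of $H^1(X,\Omega^1_X)\to H^1(\widetilde U,\Omega^1_{\widetilde U})$; surjectivity would require $H^2_{\widetilde\Sigma_0}=0$, i.e.\ codimension $\geq 3$, which you have not established.

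The same issue recurs in your bottom-row argument: reflexivity of $T_Y$ gives depth $\geq 2$ along $\Sigma_0$, which again yields only injectivity of $H^1(Y,T_Y)\to H^1(U,T_U)$; you need depth $\geq 3$ for the isomorphism, and this is where Namikawa's argument does real work using the symplectic structure and the fact that $\Sigma_0$ has codimension $\geq 4$ (not merely $\geq 2$). Your final paragraph correctly anticipates that this is the crux, but the sketch as written does not close the gap.
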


Let $H^{1,1}(X)=H^{1,1}(X)^G\oplus H^{1,1}(X)'$ be the $G$-equivariant
decomposition, where 
$
H^{1,1}(X)':=\oplus_{R\neq 1} \Hom_G[R,H^{1,1}(X)]\otimes R, 
$
and $R$ varies over all non-trivial irreducible complex representations of $G$.
Identify $H^1(X,TX)$ with $\Hom[H^{2,0}(X),H^{1,1}(X)]$.

\begin{lem}
\label{lem-differential}
The differential 
$df_0:\Hom[H^{2,0}(X),H^{1,1}(X)]\rightarrow 
\Ext^1(\Omega^1_Y,\StructureSheaf{Y})$ factors as the projection onto
$\Hom[H^{2,0}(X),H^{1,1}(X)^G]$, followed by an injective homomorphism
$
\Hom[H^{2,0}(X),H^{1,1}(X)^G]\rightarrow \Ext^1(\Omega^1_Y,\StructureSheaf{Y}).
$
\end{lem}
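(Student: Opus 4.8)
The plan is to combine two facts: (i) the differential $df_0$ is identified with $\pi_*:H^1(X,TX)\to \Ext^1(\Omega^1_Y,\StructureSheaf{Y})$, which by Proposition \ref{prop-namikawa-commutative-diagram} is the composition of the restriction isomorphism $H^1(X,TX)\cong H^1(\widetilde U,T\widetilde U)$ with the vertical map on the right; and (ii) $f:Def(X)\to Def(Y)$ is a branched Galois cover with group $G$, and the Galois group of a quotient map $M\to M/G$ of a smooth germ by a finite group acting linearly has differential at the fixed point $0$ equal to the projection onto the $G$-invariants. Fact (ii) says $df_0$ kills the non-invariant part $\Hom[H^{2,0}(X),H^{1,1}(X)']$, so $df_0$ factors through the projection onto $\Hom[H^{2,0}(X),H^{1,1}(X)^G]$; the remaining content is that the induced map on invariants is injective.

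First I would make (ii) precise. By Lemma \ref{lemma-galois-cover}, $f$ is, on a suitable neighborhood, the quotient map for the $G$-action $\alpha$ on $Def(X)$, with $\alpha_g(0)=0$ and $d(\alpha_g)_0$ equal to the monodromy operator $\gamma_g$ acting on $H^1(X,TX)=\Hom[H^{2,0}(X),H^{1,1}(X)]$; here $G$ acts trivially on $H^{2,0}(X)$ by part (\ref{item-Hodge-isemetry}) of that lemma, so the action on $H^1(X,TX)$ is through the action on $H^{1,1}(X)$. Since $f=f\circ\alpha_g$, we get $df_0 = df_0\circ \gamma_g$ for all $g\in G$, hence $df_0$ vanishes on the $G$-complement $H^{1,1}(X)'$ tensored appropriately, i.e. $df_0$ factors through the projection $\Hom[H^{2,0}(X),H^{1,1}(X)]\to \Hom[H^{2,0}(X),H^{1,1}(X)^G]$, exactly as claimed.

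For injectivity on the invariant part, I would argue by dimension count. The source $\Hom[H^{2,0}(X),H^{1,1}(X)^G]$ has dimension $\dim H^{1,1}(X)^G$ (as $H^{2,0}(X)$ is one-dimensional), while $\Ext^1(\Omega^1_Y,\StructureSheaf{Y})=T_{\bar 0}Def(Y)$ and $\dim Def(Y)=\dim Def(X)=b_2(X)-2=\dim H^{1,1}(X)$ by Theorem \ref{thm-namikawa}. On the other hand $f:Def(X)\to Def(Y)$ is generically \'etale of degree $|G|$, so $\dim Def(X)/G$, computed as the dimension of the ring of invariants, equals $\dim Def(Y)$; and $Def(X)/G$ is smooth of dimension equal to $\dim$(image of $df_0$), which forces $\dim Def(Y)=\operatorname{rank}(df_0)$. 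But the image of $df_0$ is exactly the image of the induced map on $\Hom[H^{2,0}(X),H^{1,1}(X)^G]$, so that induced map has rank $\dim Def(Y)=\dim H^{1,1}(X)\ge \dim H^{1,1}(X)^G$; since its rank cannot exceed its source dimension $\dim H^{1,1}(X)^G$, equality holds, $H^{1,1}(X)^G=H^{1,1}(X)$ would follow only if $G$ acts trivially — so instead I should be more careful here. The correct bookkeeping is: $f$ Galois of degree $|G|$ over $Def(Y)$ means $Def(Y)$ has dimension equal to $\dim(Def(X)) $ still (a finite map doesn't drop dimension), and the differential of a quotient-by-finite-group map at the fixed point is surjective onto $T_{\bar 0}Def(Y)$ precisely because $Def(X)/G\to Def(Y)$ is an isomorphism (Lemma \ref{lemma-galois-cover}) and the tangent space of $Def(X)/G$ at the image of $0$ is $(T_0Def(X))^G = \Hom[H^{2,0}(X),H^{1,1}(X)^G]$. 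Thus the induced map $\Hom[H^{2,0}(X),H^{1,1}(X)^G]\to \Ext^1(\Omega^1_Y,\StructureSheaf{Y})$ is surjective between spaces of equal dimension, hence an isomorphism, in particular injective.

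The main obstacle is the last point: identifying the tangent space of the quotient germ $Def(X)/G$ at the image of $0$ with the invariants $(T_0Def(X))^G$ and knowing that the natural map from there to $T_{\bar 0}Def(Y)$ induced by $f$ is an isomorphism. This needs that $f$ really is (near $0$) the quotient map $Def(X)\to Def(X)/G$ and that $Def(X)/G\xrightarrow{\ \cong\ }Def(Y)$ as analytic germs — which is part (\ref{item-galois}) of Lemma \ref{lemma-galois-cover} — together with the standard fact that for a finite group acting linearly on a smooth germ $(M,0)$ fixing $0$, the cotangent space of $M/G$ at $[0]$ is the $G$-coinvariants of $\mathfrak m/\mathfrak m^2$, dually $(T_0M)^G$ for a reductive (here finite) group. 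Once that identification is in hand, the factorization and injectivity are formal. I would also double-check, using $G\hookrightarrow O[H^2(X,\Integers)]$ from Lemma \ref{lemma-galois-cover} and the negative-definiteness of $L$ from Lemma \ref{lemma-L}, that the $G$-complement $H^{1,1}(X)'$ is indeed contained in $L\otimes\ComplexNumbers$ (it is, since $G$ acts trivially on $L^\perp$ by Proposition \ref{prop-G-embedds-in-G-L}), which makes the decomposition $H^{1,1}(X)=H^{1,1}(X)^G\oplus H^{1,1}(X)'$ concrete and confirms that $\dim\Ext^1(\Omega^1_Y,\StructureSheaf{Y})=\dim H^{1,1}(X)^G$, matching Proposition \ref{prop-namikawa-commutative-diagram}'s identification $\Ext^1(\Omega^1_Y,\StructureSheaf{Y})\cong\Ext^1(\Omega^1_U,\StructureSheaf{U})\cong H^1(\widetilde U, T\widetilde U)$.
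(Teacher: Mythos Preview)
Your factorization argument is correct and matches the paper: since $f\circ\alpha_g=f$ for each $g\in G$, the differential $df_0$ annihilates the non-invariant summand $\Hom[H^{2,0}(X),H^{1,1}(X)']$. The injectivity argument, however, contains a genuine error. You assert that for a finite group $G$ acting on a smooth germ $(M,0)$, the tangent space of $M/G$ at $[0]$ is $(T_0M)^G$; this is false. Take $G=\Integers/2\Integers$ acting on $\ComplexNumbers$ by $x\mapsto -x$: the quotient is again $\ComplexNumbers$ (via $y=x^2$) with one-dimensional tangent space at the origin, while $(T_0\ComplexNumbers)^G=0$. In the present situation $Def(X)/G\cong Def(Y)$ is smooth of dimension $\dim H^{1,1}(X)$ by Theorem~\ref{thm-namikawa}, so $\dim\Ext^1(\Omega^1_Y,\StructureSheaf{Y})=\dim H^{1,1}(X)$, strictly larger than $\dim H^{1,1}(X)^G$ whenever $G$ is nontrivial. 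Your conclusion ``surjective between spaces of equal dimension'' thus fails on both counts (for $x\mapsto x^2$ the differential at $0$ is zero, hence not surjective either). The final paragraph compounds this: Proposition~\ref{prop-namikawa-commutative-diagram} gives a commutative square, not an isomorphism $\Ext^1(\Omega^1_U,\StructureSheaf{U})\cong H^1(\widetilde U,T\widetilde U)$.

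The fix is shorter than your detour. Let $\mathfrak{m}$ be the maximal ideal of $0$ in $Def(X)$. Since $G$ is finite in characteristic zero, taking $G$-invariants is exact, so the natural map $\mathfrak{m}^G/(\mathfrak{m}^G)^2\to\mathfrak{m}/\mathfrak{m}^2$ has image exactly $(\mathfrak{m}/\mathfrak{m}^2)^G$. Dually, $df_0$ has kernel equal to the non-invariant part of $T_0Def(X)$, and the induced map $(T_0Def(X))^G\to T_{\bar 0}Def(Y)$ is injective (though typically not surjective). This is all the paper's one-line proof ``since $Def(Y)=Def(X)/G$'' is invoking.
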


\begin{proof}
The statement follows immediately from Lemma \ref{lemma-galois-cover}, 
since $Def(Y)=Def(X)/G$.
\end{proof}

\begin{rem}
Once Theorem \ref{thm-G-is-isomorphic-to-product-of-weyl-groups} is proven, 
we would get that the invariant subspace $L^G$ of $L$ vanishes.
Hence, the rank of $df_0$ is equal to 
$\dim[H^2(U,\ComplexNumbers)\cap H^{1,1}(X)]=h^{1,1}(X)-\rank(L)$,
by Lemmas \ref{lemma-L} and \ref{lem-differential}.
\end{rem}

%
\subsection{Pro-representable deformation functors of open subsets}
Let $Art$ be the category of local Artin algebras over $\ComplexNumbers$,
and $Set$ the category of sets. Let
$D_X$, $D_Y$, $D_{\widetilde{U}}$, and $D_U$ be the 
deformation functors from $Art$ to $Set$, sending $A$ to the set of 
equivalence classes of deformations, of the corresponding variety, over $A$
\cite{schlessinger}, section (3.7). 
The terms {\em hull} and {\em pro-representable functors}
are defined in \cite{schlessinger}, Definition 2.7. 
The reader is refered to 
\cite{illusie}, \S{8.1}, for an excellent summery of basic definitions in
formal algebraic geometry. 
The following is an immediate corollary of Proposition
\ref{prop-namikawa-commutative-diagram}.

\begin{cor}
\label{cor-hull-of-deformation-functor-of-open-set}
The functors $D_X$, $D_{\widetilde{U}}$, $D_Y$ and $D_U$ are pro-representable.
Denote by $\DDef(X)$, $\DDef(Y)$, $\DDef(\widetilde{U})$, and $\DDef(U)$
the corresponding hulls. We have the commutative diagram
\[
\begin{array}{ccccc}
Def(X) & \LongLeftArrowOf{\iota_X} & \DDef(X) & \LongRightArrowOf{\rho_X} &
\DDef(\widetilde{U})
\\
f \ \downarrow \ \hspace{1ex} & & \hat{f} \ \downarrow \ \hspace{1ex} & & 
\hat{f}^\rho \ \downarrow \ \hspace{2ex}
\\
Def(Y)& \LongLeftArrowOf{\iota_Y} & \DDef(Y) & \LongRightArrowOf{\rho_Y} &
\DDef(U),
\end{array}
\]
where $\iota_X$ factors through an isomorphism of the hull 
$\DDef(X)$ with the completion
of the Kuranishi local moduli space $Def(X)$ at $0$, 
and similarly for $\iota_Y$.
The morphism $\rho_X$ is the one associated 
to the morphism of functors from $D_X$ to $D_{\widetilde{U}}$
induced by restriction. The morphism $\rho_X$ is an isomorphism. 
The morphism $\rho_Y$ is defined similarly and is an isomorphism. 
The morphism $\hat{f}$ is the completion of $f$
at $0$ and we set $\hat{f}^\rho:=\rho_Y\circ\hat{f}\circ \rho_X^{-1}$.
\end{cor}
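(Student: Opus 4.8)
The plan is to deduce everything from Schlessinger's criteria together with Proposition \ref{prop-namikawa-commutative-diagram} and Theorem \ref{thm-namikawa}. First I would establish pro-representability of the four deformation functors. For $D_X$ this is classical: $X$ is a compact complex manifold, hence $D_X$ has a hull (the Kuranishi space), and since $X$ is an irreducible holomorphic symplectic manifold, $H^0(X,T_X)=0$ (the automorphism functor is unobstructed and the tangent space $H^0(X,T_X)$ vanishes because $T_X\cong\Omega^1_X$ and $H^0(X,\Omega^1_X)=0$), so by Schlessinger the hull actually pro-represents $D_X$; the completion of $Def(X)$ at $0$ furnishes the representing object, giving $\iota_X$. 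For $D_{\widetilde U}$, $D_Y$, $D_U$ I would invoke Proposition \ref{prop-namikawa-commutative-diagram}: it identifies $H^1(\widetilde U,T\widetilde U)$ with $H^1(X,T_X)$ and $\Ext^1(\Omega^1_U,\mathcal O_U)$ with $\Ext^1(\Omega^1_Y,\mathcal O_Y)$, so these tangent spaces are finite-dimensional; combined with the finite-dimensionality of the obstruction spaces (again via the same restriction isomorphisms, or Namikawa's smoothness statement in Theorem \ref{thm-namikawa} for $D_Y$) Schlessinger's criterion (H1)--(H4) applies, and I would check (H4) — surjectivity of $D(A'\times_A A'')\to D(A')\times_{D(A)}D(A'')$ becoming bijective — using that the relevant $\Ext^0$ vanishing (no infinitesimal automorphisms) holds. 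The cleanest route is: $D_X$ pro-representable, $\rho_X\colon D_X\to D_{\widetilde U}$ an isomorphism of functors by the horizontal isomorphism in \eqref{namikawas-diagram}, hence $D_{\widetilde U}$ is pro-representable with the same hull; likewise $\rho_Y\colon D_Y\to D_U$ is an isomorphism of functors, and $D_Y$ is pro-representable by Theorem \ref{thm-namikawa} (smoothness of $Def(Y)$), so $D_U$ is too.

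Next I would assemble the diagram. The morphism $\pi\colon X\to Y$ induces the morphism of functors $D_X\to D_Y$ sending a deformation $\mathcal X_A/A$ to $\pi_*\mathcal O_{\mathcal X_A}$, via the construction recalled just before Proposition \ref{prop-namikawa-commutative-diagram}; on hulls this gives $\hat f$, and by the compatibility of the Kuranishi-family construction with completion (Theorem \ref{thm-namikawa} and the diagram \eqref{diagram-f-general}) the square with $\iota_X,\iota_Y$ commutes and $\hat f$ is the completion of $f$ at $0$. Restriction of a deformation to the open set $\widetilde U$ (resp.\ $U$) gives the morphism of functors inducing $\rho_X$ (resp.\ $\rho_Y$), and the outer square commutes on the level of functors simply because restriction commutes with the pushforward-along-$\pi$ construction — both sides of $\hat f^\rho\circ\rho_X=\rho_Y\circ\hat f$ send a deformation of $X$ to the deformation of $U$ obtained by pushing forward to $Y$ and restricting, which equals pushing forward the restriction to $\widetilde U$ (these agree because $U=Y\setminus\Sigma_0$, $\widetilde U=\pi^{-1}(U)$, and pushforward is local on the base). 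That the horizontal maps $\rho_X,\rho_Y$ are isomorphisms is then immediate from Proposition \ref{prop-namikawa-commutative-diagram}: a morphism of hulls is an isomorphism iff it induces an isomorphism on tangent spaces and an injection on obstruction spaces, and both restriction maps in \eqref{namikawas-diagram} are isomorphisms on tangent spaces (indeed the whole functors are isomorphic since the obstruction maps are controlled identically). Finally I set $\hat f^\rho:=\rho_Y\circ\hat f\circ\rho_X^{-1}$, which is well-defined since $\rho_X$ is invertible.

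The main obstacle I anticipate is not any single hard step but the verification that $D_{\widetilde U}$ and $D_U$ — deformation functors of \emph{non-compact} (open) complex varieties — genuinely satisfy Schlessinger's conditions and are pro-representable rather than merely having a hull. The safe way around this is to avoid proving it directly: deduce pro-representability of $D_{\widetilde U}$ from that of $D_X$ via the functor-isomorphism $\rho_X$ (which is exactly what Proposition \ref{prop-namikawa-commutative-diagram} supplies, since an isomorphism of functors transports pro-representability), and similarly deduce pro-representability of $D_U$ from that of $D_Y$, the latter being guaranteed by Namikawa's smoothness theorem. With that reduction the corollary is essentially a formal consequence of Proposition \ref{prop-namikawa-commutative-diagram}, Theorem \ref{thm-namikawa}, and Schlessinger's theory, which is why the text rightly calls it ``an immediate corollary.''
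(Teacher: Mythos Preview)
Your overall strategy is sound and close to the paper's, but there is a genuine gap precisely at the point you yourself flag as the ``main obstacle,'' and your proposed ``safe way around'' is circular.

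Proposition \ref{prop-namikawa-commutative-diagram} gives only isomorphisms of \emph{tangent spaces}, not isomorphisms of functors. You cannot conclude that $\rho_X:D_X\to D_{\widetilde U}$ (or $\rho_Y:D_Y\to D_U$) is an isomorphism of functors from a tangent-space isomorphism alone; that inference already presupposes that the target functor is pro-representable (or at least that its hull is smooth), which is what you are trying to prove. Likewise, your assertion that ``$D_Y$ is pro-representable by Theorem \ref{thm-namikawa} (smoothness of $Def(Y)$)'' is incorrect: smoothness of the hull does not imply pro-representability; one still needs Schlessinger's condition (H4), i.e.\ the vanishing of infinitesimal automorphisms.

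The paper breaks the circularity by proving the $H^0$ vanishings directly. For $X$ and $\widetilde U$ it simply asserts $H^0(X,TX)=H^0(\widetilde U,T\widetilde U)=0$ (the second follows by Hartogs, since $\widetilde U$ is the complement of a codimension $\geq 2$ subset of $X$). For $U$ and $Y$ the key step you are missing is a sheaf isomorphism $\pi_*T\widetilde U\cong TU$, which the paper reduces, locally in the analytic topology, to the case of simple surface singularities and then cites Burns--Wahl \cite{burns-wahl}, Proposition 1.2. This gives $H^0(U,TU)\cong H^0(\widetilde U,T\widetilde U)=0$, and since $TY$ is torsion free, the vanishing of $H^0(U,TU)$ forces $H^0(Y,TY)=0$. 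Only after pro-representability is established does the paper prove that $\rho_Y$ (and $\rho_X$) is an isomorphism, using smoothness of $R_Y$ together with the cotangent-space isomorphism: $R_U$ is a quotient of a formal power series ring, and the composite $\ComplexNumbers[[m/m^2]]\to R_U\to R_Y$ is an isomorphism because it is one on cotangent spaces, forcing $\rho_Y^*$ to be an isomorphism. Your outline should incorporate the Burns--Wahl input for $H^0(U,TU)=0$; without it the pro-representability of $D_U$ and $D_Y$ is not established.
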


\begin{proof}
The deformation functor $D_S:Art\rightarrow Set$, 
of a scheme $S$ over $\ComplexNumbers$, 
has a hull, if and only if $Ext^1(\Omega^1_S,\StructureSheaf{S})$
is finite dimensional, by \cite{schlessinger}, section 3.7.
Finite dimensionality, for $S=U$ or $\widetilde{U}$, is established
in Proposition \ref{prop-namikawa-commutative-diagram}.
$\DDef(X)$ and $\DDef(\widetilde{U})$ are pro-representable, since 
$H^0(X,TX)$ and $H^0(\widetilde{U},T\widetilde{U})$ both vanish.

The sheaf  $TY$ of derivations of $\StructureSheaf{Y}$ is torsion free.
The vanishing of $H^0(U,TU)$ would thus imply that of $H^0(Y,TY)$, 
and would consequently prove that 
$D_U$ and $D_Y$ are also pro-representable.
Hence, it suffices to prove 
that there is a sheaf isomorphism 
$\pi_*T\widetilde{U}\rightarrow TU$, yielding 
an isomorphism $H^0(\widetilde{U},T\widetilde{U})\cong H^0(U,TU)$. 
The spaces of global sections are the same, regardless if we use the 
Zariski or analytic topology. 
The sheaf-theoretic question is thus local,
in the analytic topology, and so reduces to the case of 
simple surface singularities
(\cite{reid}, section (3.4)).
The latter is proven in \cite{burns-wahl}, Proposition 1.2.

The same argument shows that each of $\rho_X$ and $\rho_Y$ is an isomorphism.
We prove it only for $\rho_Y$. 
Denote by $R_Y$ and $R_U$ the formal coordinate rings
of $\DDef(Y)$ and $\DDef(U)$. 
$R_Y$ is a formal power series ring, by Theorem
\ref{thm-namikawa}.
Let $m$ be the maximal ideal of $R_U$.
$R_U$ is a quotient of the formal power series ring
$\ComplexNumbers[[m/m^2]]$ with cotangent space $m/m^2$, by construction 
(\cite{schlessinger}, Theorem 2.11).
The composition 
$\ComplexNumbers[[m/m^2]]\LongRightArrowOf{q} 
R_U\LongRightArrowOf{\rho_Y^*} R_Y$
is an isomorphism, as it induces an isomorphism of Zariski cotangent 
spaces, by Proposition \ref{prop-namikawa-commutative-diagram}. 
The homomorphism $\rho^*_Y$ is an isomorphism, 
since the homomorphism $q$ is surjective.
\end{proof}

\hide{
\begin{rem}
The morphism $\hat{f}^\rho:\DDef(\widetilde{U})\rightarrow \DDef(U)$
is the classifying morphism associated to the formal deformation
$(\iota_Y\circ\hat{f}\circ \rho_X^{-1})^*[\Y\setminus \Sigma_0]$ of $U$ over
$\DDef(\widetilde{U})$. 
Set $\widetilde{\UU}:=(\iota_X\circ \rho_X^{-1})^*[\X\setminus 
\pi^{-1}(\Sigma_0)]$, and 
$\UU:=(\iota_Y\circ\rho_Y^{-1})^*[\Y\setminus\Sigma_0]$.
The completion of the morphism $\nu:\X\rightarrow Y$
restricts to a morphism $\hat{\nu}$ fitting in a commutative diagram
\[
\begin{array}{ccc}
\widetilde{\UU}&\LongRightArrowOf{\hat{\nu}}& \UU
\\
\downarrow & & \downarrow
\\
\DDef(\widetilde{U}) & \LongRightArrowOf{\hat{f}^\rho}&
\DDef(U)
\end{array}
\]
analogous to Diagram (\ref{diagram-f-general}).
\end{rem}
}

Let $g$ be an automorphism of $Def(X)$, such that $g(0)=0$.
We use the above Corollary to obtain a sufficient criterion
for $g$ to belong to the Galois group $G$  of $f:Def(X)\rightarrow Def(Y)$. 
Given a Riemann surface $C$ in $Def(X)$, containing $0$, denote by 
$\Y_C$ the restriction of $f^*\Y$ to $C$.
Set $\Y^0_C:=\Y_C\setminus \Sigma_0$, 
so that the fiber of $\Y^0_C$ over $0$ is $U:=Y\setminus\Sigma_0$. 
Let $V$ be the open subset of $Def(X)$ in Diagram (\ref{main-diagram}).
Denote by $\widehat{C}$ the completion of $C$ at $0$.

\begin{lem}
\label{lem-criterion-for-automorphism-to-be-deck-trans}
Assume that there exists a Zariski dense open subset $\TT$ of
$\PP[T_0Def(X)]$ satisfying the following property. 
For every connected  
Riemann surface $C\subset Def(X)$,
containing $0$, whose tangent line $T_0C$
belongs to $\TT$, and such that $C\setminus\{0\}$ is contained in $V$, 
the completions of $\Y^0_C$ and $g^*(\Y^0_{g(C)})$ along $U$
are equivalent formal deformations of $U$ over
$\widehat{C}$.
Then $g$ belongs to $G$.
\end{lem}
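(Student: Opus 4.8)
\textbf{Proof plan for Lemma \ref{lem-criterion-for-automorphism-to-be-deck-trans}.}

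The plan is to show that $g$ acts as a deck transformation of $f:Def(X)\to Def(Y)$ by checking that $f\circ g$ and $f$ agree, and this is most naturally done via the pro-representable deformation functors of the open sets furnished by Corollary \ref{cor-hull-of-deformation-functor-of-open-set}. First I would reduce the global statement over $Def(X)$ to a formal statement at $0$: since $g(0)=0$, the automorphism $g$ induces an automorphism $\hat g$ of $\DDef(X)$, and it suffices to prove that $\hat f\circ\hat g=\hat f$ as maps $\DDef(X)\to\DDef(Y)$; indeed, once the completions of $f$ and $f\circ g$ at $0$ coincide, the two holomorphic maps $f$ and $f\circ g$ from $Def(X)$ to $Def(Y)$ agree on a neighborhood of $0$ (both $Def(X)$ and $Def(Y)$ being smooth, hence the local rings being power series rings and a formal equality of maps being an honest equality of germs), and then $g$ permutes the fibers of $f$, i.e., $g\in G$.

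Next I would transport the problem through the isomorphisms $\rho_X:\DDef(X)\to\DDef(\widetilde U)$ and $\rho_Y:\DDef(Y)\to\DDef(U)$ of Corollary \ref{cor-hull-of-deformation-functor-of-open-set}. So it is enough to show $\hat f^\rho\circ\hat g^\rho=\hat f^\rho$ as maps $\DDef(\widetilde U)\to\DDef(U)$, where $\hat g^\rho:=\rho_X\circ\hat g\circ\rho_X^{-1}$. Now $\hat f^\rho$ is the classifying morphism of the formal deformation of $U$ obtained by pushing forward, via the deformed contraction, the restriction to $\widetilde U$ of the semi-universal family of $X$; concretely, over a test base it sends a deformation of $\widetilde U$ to the deformation of $U$ got by applying $\pi_*$ to the corresponding family of open subsets of $X_t$. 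The hypothesis is precisely tailored to identify these two classifying morphisms: for a one-parameter family $C$ with $T_0C$ in the dense set $\TT$, the formal deformations $\Y^0_C$ and $g^*(\Y^0_{g(C)})$ of $U$ over $\widehat C$ are equivalent. Restricting to such curves computes the two morphisms $\hat f^\rho$ and $\hat f^\rho\circ\hat g^\rho$ on the corresponding tangent vectors, so they agree on the dense set $\TT\subset\PP[T_0Def(X)]$ of tangent directions; since a morphism of (formally) smooth pointed spaces is determined by its restrictions to a Zariski-dense set of smooth formal curves through the point, the two morphisms are equal.

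To make the curve-wise comparison rigorous I would work order by order in the Artinian truncations $C_n:=\mathrm{Spec}\,\ComplexNumbers[t]/(t^{n+1})$: the hypothesis gives, for each $C$ with tangent direction in $\TT$, an isomorphism of the two formal deformations of $U$ over $\widehat C$, hence over each $C_n$, and therefore the two classifying maps $\DDef(\widetilde U)\to\DDef(U)$, precomposed with the map $\DDef(C_n)\to\DDef(\widetilde U)$ classifying the restriction of $\widetilde{\UU}$ to the curve, coincide on $C_n$. As $C$ ranges over curves with $T_0C\in\TT$ and $n$ ranges over all integers, the induced maps on $\mathrm{Spec}$ of truncated polynomial rings probe the morphism $\hat f^\rho\circ(\hat g^\rho)^{\pm}$ along a dense family of formal arcs; by smoothness of $\DDef(\widetilde U)$ a formal morphism to $\DDef(U)$ that agrees with $\hat f^\rho$ along every such arc must equal $\hat f^\rho$, giving $\hat f^\rho\circ\hat g^\rho=\hat f^\rho$ as desired. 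The main obstacle I anticipate is the last density argument: one must be careful that ``agreement on a Zariski-dense set of tangent directions, to all orders'' really forces equality of the two formal maps. The cleanest way is to note both maps have the same domain and target formal power series rings, pick a basis of linear coordinates, and observe that the induced ring homomorphisms, evaluated on each coordinate, are power series in the $C$-parameter $t$ whose coefficients are polynomial functions of $T_0C$; equality for all $T_0C$ in a Zariski-dense subset of the projective space of directions forces equality of all these polynomials, hence of the two ring homomorphisms, hence $g\in G$.
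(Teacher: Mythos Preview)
Your approach is correct but takes a longer route than the paper's. You reduce to the formal identity $\hat f\circ\hat g=\hat f$ on $\DDef(X)$ and then prove it by a ``density of formal arcs'' argument, which you rightly flag as the delicate step. The paper avoids this step entirely by staying on the analytic side curve by curve: for each admissible $C$, pro-representability of $D_U$ (this is where Corollary \ref{cor-hull-of-deformation-functor-of-open-set} enters, just as in your argument) shows that the two classifying morphisms $\widehat C\to\DDef(U)$ induced by $\Y^0_C$ and $g^*(\Y^0_{g(C)})$ coincide, i.e.\ $\restricted{\hat f}{\widehat C}=\restricted{\widehat{f\circ g}}{\widehat C}$. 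Now the key point you do not use: since $C$ is a \emph{connected} Riemann surface and $f|_C$, $(f\circ g)|_C$ are holomorphic maps to $Def(Y)$, equality of their formal germs at $0$ forces $f|_C=(f\circ g)|_C$ on all of $C$. The union of such curves (e.g.\ lines through $0$ in a local chart with tangent in $\TT$, punctured to lie in $V$) covers a dense subset of $Def(X)$, so $f=f\circ g$ everywhere by analyticity.

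What this buys: the paper never needs to argue that a morphism of formal power series rings is determined by its restrictions to a dense set of formal lines. Your polynomial-in-direction argument for that density step is fine once you restrict attention to genuine lines in a chart (so that the higher-order jets of the embedding are determined by $T_0C$), but it is extra work. The paper's use of connectedness of $C$ converts a single formal equality at $0$ into an honest analytic equality along the whole curve, and then ordinary analytic continuation on $Def(X)$ finishes.
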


\begin{proof}
It suffices to prove the equality 
$\restricted{f\circ g}{C}=\restricted{f}{C}$, 
for every curve $C$ as above, where $\restricted{f}{C}:C\rightarrow Def(Y)$
is the restriction of $f$ to $C$.

Let $\widehat{\Y}^0_C$ be the completion of $\Y^0_C$ along $U$.
Let $\restricted{\hat{f}}{C}:\widehat{C}\rightarrow \DDef(U)$ be 
the composition of the completion of $\restricted{f}{C}$ at $0$ 
with the isomorphism $\DDef(Y)\cong \DDef(U)$ of Corollary 
\ref{cor-hull-of-deformation-functor-of-open-set}.
Now $\restricted{\hat{f}}{C}$ is the classifying morphism of
$\widehat{\Y}^0_C$ and $\restricted{\widehat{f\circ g}}{C}$
is the classifying morphism of 
$\restricted{\hat{g}}{\widehat{C}}^*(\widehat{\Y}^0_{g(C)})$.
These two formal deformations over $\widehat{C}$ are equivalent,
by assumption. 
The equality
$\restricted{\hat{f}}{C}=\restricted{\widehat{f\circ g}}{C}$ thus follows 
from the pro-representability of the functor $D_U$. The desired equality 
$\restricted{f\circ g}{C}=\restricted{f}{C}$ follows, as $C$ is connected.
\end{proof}

%
\subsection{Galois reflections via flops}
Let $E\subset X$ be the exceptional locus of $\pi:X\rightarrow Y$, 
and $E_i$ an irreducible component of $E$, which intersects $\widetilde{U}$
along a $\PP^1$ bundle $E^0_i\rightarrow \widetilde{B}_i$ over
an unramified cover $\widetilde{B}_i\rightarrow B$ of 
a connected component $B$ of $\Sigma\setminus\Sigma_0$.
Let $e_i\in H^2(X,\Integers)$ be the class Poincare-dual to $E_i$.
Let $e^\vee_i\in H^{4n-2}(X,\Integers)$ be the class Poincare-dual to
the fiber of $E_i$ over a point of $\widetilde{B}_i$. 

\begin{lem}
\label{lem-galois-reflection-via-flop}
The Beauville-Bogomolov degree $(e_i,e_i)$ is negative.
The isomorphism 
$H^2(X,\RationalNumbers)\rightarrow H^{4n-2}(X,\RationalNumbers)$,
induced by the Beauville-Bogomolov pairing, maps the 
class $\frac{-2e_i}{(e_i.e_i)}$ to the class $e^\vee_i$. Consequently,
the reflection
\[
g(x) \ \ := \ \ x - \frac{2(x,e_i)}{(e_i,e_i)}e_i
\]
has integral values, for all $x\in H^2(X,\Integers)$. 
The integral Hodge-isometry $g$
induces an automorphism of $Def(X)$, which belongs to the Galois group 
$G$.
\end{lem}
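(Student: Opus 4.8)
The plan is to split the statement into two independent assertions and then connect them. The first assertion---that $(e_i,e_i)<0$ and that the Beauville--Bogomolov pairing sends $\frac{-2e_i}{(e_i,e_i)}$ to $e_i^\vee$---is an intersection-theoretic computation, and the second---that the resulting reflection $g$ lies in $G$---is where the deformation theory of the preceding subsections enters. For the first part, I would observe that $e_i$ lies in the lattice $L$ (it is the class of a codimension-one component of the exceptional locus), so negative-definiteness of $L$ from Lemma~\ref{lemma-L} immediately gives $(e_i,e_i)<0$. To identify the image of $e_i$ under the Beauville--Bogomolov isomorphism $H^2(X,\RationalNumbers)\to H^{4n-2}(X,\RationalNumbers)$, I would use Beauville's formula (already displayed in the proof of Lemma~\ref{lemma-L}) to compute $(x,e_i)$ for $x\in H^2(X,\RationalNumbers)$: the quadratic terms in $\sigma,\bar\sigma$ contribute zero because $E_i^0\to\widetilde{B}_i$ is a $\PP^1$-bundle over a variety of dimension $2n-2$ on which $(\sigma_Y\bar\sigma_Y)^{n-1}$ restricts nontrivially, so only the term $\lambda n\int_X x\,e_i(\sigma\bar\sigma)^{n-1}$ survives, and this integral is $\lambda n$ times the integral of $x$ over the $\PP^1$-fiber, i.e.\ a positive multiple of $e_i^\vee(x)$. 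Matching the two sides and evaluating at $x=e_i$ (using $e_i^\vee(e_i)=-2$, since a $(-2)$-curve sits inside the surface slice) pins down the proportionality constant to be $\frac{-2}{(e_i,e_i)}$. This gives $g(x)=x-e_i^\vee(x)e_i$, which is manifestly integral on $H^2(X,\Integers)$, and $g$ is a Hodge isometry because $e_i\in H^{1,1}(X)\cap L$ and $L\perp H^{2,0}(X)$ by Lemma~\ref{lemma-L}.

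For the second part, the strategy is to produce the automorphism of $Def(X)$ geometrically by a flop, and then verify it is a deck transformation using Lemma~\ref{lem-criterion-for-automorphism-to-be-deck-trans}. Concretely, over a generic one-parameter family $C\subset Def(X)$ through $0$ with $C\setminus\{0\}\subset V$, the total space $\X_C$ contains the family of exceptional divisors, and one performs the flop of $\X_C$ along (the closure of) $E_i$; more precisely, one uses the Mukai elementary transformation / reflection functor attached to the $\PP^1$-bundle $E_i^0\to\widetilde{B}_i$. This is an isomorphism away from the central fiber, it acts on $H^2(X,\Integers)$ by precisely the reflection $g$ computed above (a flop in a divisor ruled by $(-2)$-curves induces the corresponding reflection on $H^2$, by the standard computation comparing proper transforms), and---crucially---it does not change the contraction to $Y$: the flopped family and the original family have the same image under $\nu$ away from $\pi^{-1}(\Sigma_0)$, because the flop is an isomorphism in codimension one and the relevant contraction only sees $\widetilde{U}=X\setminus\pi^{-1}(\Sigma_0)$. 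Hence $\Y^0_C$ and $g^*(\Y^0_{g(C)})$ have the same completion along $U$, and Lemma~\ref{lem-criterion-for-automorphism-to-be-deck-trans} applies with $\TT$ the locus of tangent directions $t\in \PP[T_0Def(X)]$ transverse to the period hyperplane $e_i^\perp$ (a Zariski-dense open set), for which the family $\X_C$ is suitably generic so that the flop is well-behaved.

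The main obstacle I expect is making the flop construction precise in the relevative/family setting and checking its genericity hypotheses: one needs $E_i^0\to\widetilde{B}_i$ to genuinely be a $\PP^1$-bundle (not merely a $\PP^1$-fibration) and the normal-bundle data along the flopping locus to be the expected $\StructureSheaf{}(-1)\oplus\StructureSheaf{}(-1)$-type so that the elementary transformation exists; the normal crossing / reducible-fiber behaviour over points where the $ADE$ fiber degenerates has to be controlled, which is exactly why one restricts to $\widetilde{U}$ and works over a generic curve $C$. There is also a subtlety, flagged in the paper's own \emph{Caution} after Lemma~\ref{lemma-cohomology-of-X-versus-Y}, that $\pi^*H^2(Y)$ may be strictly smaller than $H^2(U,\Integers)$; one must therefore phrase the invariance of the contraction in terms of the deformation functor $D_U$ of the \emph{open} set, as Lemma~\ref{lem-criterion-for-automorphism-to-be-deck-trans} is set up to do, rather than naively in terms of $Y$ itself. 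Once these points are handled, the computation that the flop acts by $g$ on $H^2$ is the routine part, and integrality plus the Hodge-isometry property follow from the first half of the argument.
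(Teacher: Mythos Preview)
Your overall strategy matches the paper's: build a flop over a generic one-parameter family and invoke Lemma~\ref{lem-criterion-for-automorphism-to-be-deck-trans}. But the two parts are organized quite differently, and your Part~2 has a real gap.

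\textbf{Part 1.} Your direct computation via Beauville's formula is a genuine alternative to the paper's route and is arguably cleaner. The paper does \emph{not} compute $(e_i,\beta)$ directly; instead it defines the correspondence $\overline{Z}=\Delta_{\widetilde U}\cup[E_i^0\times_{\widetilde B_i}E_i^0]$, shows $\overline{Z}_*$ is a limit of isometries (this already requires the flop in Step~3), and then uses orthogonality of eigenspaces of an isometry to conclude $(e_i^\vee)^\perp=e_i^\perp$, whence the formula. Your argument bypasses the need to know $\overline{Z}_*$ is an isometry before extracting the numerical relation: once you observe that $\sigma|_{E_i^0}$ is pulled back from $\widetilde B_i$, the cross terms in Beauville's formula vanish and fiber integration gives $(e_i,\beta)=c\cdot e_i^\vee(\beta)$; then $e_i^\vee(e_i)=\deg(K_{E_i}|_{\PP^1_t})=-2$ fixes $c$. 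This works and decouples the numerics from the flop.

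\textbf{Part 2.} Here you are missing the key identification step. You assert that the flop ``acts on $H^2(X,\Integers)$ by precisely the reflection $g$'' and that it ``does not change the contraction to $Y$'', and conclude. But two distinct facts are being conflated. First, to pass from ``the flop correspondence induces $g$ on $H^2$'' to ``the flopped family $\X'$ is formally equivalent to $g^*(\X^0_{g(C)})$ over $\widehat C$'', one must invoke Local Torelli explicitly: the paper (Step~5) compares the classifying maps $\kappa_{\X^0},\kappa_{\X'}:\widehat C\to\DDef(\widetilde U)$ through the period map and deduces $\kappa_{\X'}=\hat g\circ\kappa_{\X^0}$. Without this, you have no link between the geometric flop and the automorphism $g$ of $Def(X)$. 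Second, your justification ``the flop is an isomorphism in codimension one'' does not by itself give $\Y^0_C\cong g^*(\Y^0_{g(C)})$. The paper separates this into two pieces: it first constructs a contraction $\nu':\X'\to\Y'$ whose formal completion along $U$ agrees with that of $g^*(\Y^0_{g(C)})$ (this uses the formal modification theorem of Ancona--Tomassini, Theorem~\ref{thm-C}), and then shows $\Y'\cong\Y^0_C$ by mapping $\X'$ to the product $\Y'\times\Y^0_C$ and checking the image is the graph of an isomorphism between normal spaces (Lemma~\ref{lemma-graph-of-isomorphism}). Your proposal does not supply either of these ingredients. The codimension-one remark is a heuristic, not a proof: two small birational contractions of the same smooth variety to normal targets need a normality/graph argument to be identified.

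You correctly flag the normal-bundle genericity as an obstacle; the paper handles it by choosing $T_0C$ outside the hyperplane where the Kodaira--Spencer class restricts trivially to $H^1(\PP^1_t,\omega_{\PP^1_t})$, forcing $N_{E_i^0/\X^0}|_{\PP^1_t}\cong\StructureSheaf{}(-1)\oplus\StructureSheaf{}(-1)$ so that the blow-up/blow-down flop exists.
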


\begin{proof}
We follow the strategy suggested by Lemma 
\ref{lem-criterion-for-automorphism-to-be-deck-trans}.

\underline{Step 1:} (A generic one-parameter deformation).
Let $C\subset Def(X)$ be a connected Riemann surface containing $0$ and 
$\psi_C:\X_C\rightarrow C$ the restriction of the semi-universal family.
Assume that $C\setminus \{0\}$ is contained in the open subset $V$ of $Def(X)$,
given in Diagram (\ref{main-diagram}).
Set $\widetilde{\Sigma}_0:=\pi^{-1}(\Sigma_0)$ and 
$\X^0_C:=\X_C\setminus \widetilde{\Sigma}_0$, so that $\widetilde{U}$ is 
the fiber of $\psi^0:\X^0_C\rightarrow C$ over $0\in C$. 

Denote by 
$\epsilon\in 
\Ext^1_{\widetilde{U}}(N_{\widetilde{U}/\X^0},T\widetilde{U})\cong 
H^1(\widetilde{U},T\widetilde{U})$
the Kodaira-Spencer class and $\restricted{\epsilon}{E}$ its
restriction to
$H^1(E^0_i,\restricted{T\widetilde{U}}{E^0_i})$. 
Let $j:\restricted{T\widetilde{U}}{E^0_i}\rightarrow N_{E^0_i/\widetilde{U}}$
be the natural homomorphism, and 
$\alpha:=j_*(\restricted{\epsilon}{E})$ the pushed-forward class.
Denote by $\PP^1_t$, $t\in \widetilde{B}_i$, the fiber of $E^0_i$ over $t$.
Note that $\alpha$ restricts to $\PP^1_t$ as a class in 
$H^1(\PP^1_t,\omega_{\PP^1_t})$. 

$H^1(\widetilde{U},T\widetilde{U})$
is isomorphic to $H^1(X,TX)$, by \cite{namikawa-deformations},
Proposition 2.1, and to $H^{1,1}(X)$, via the holomorphic symplectic form.
Let $\TT$ be the complement in $\PP{H}^1(X,TX)$ of the 
projectivization of the kernel of the composition
$H^1(X,TX)\rightarrow H^{1,1}(X) \rightarrow H^{1,1}(\PP^1_t)$.
$\TT$ is non-empty, since the homomorphism is surjective.
Choose the curve $C$ so that $T_0C$ belongs to $\TT$. Then the restriction 
$\restricted{\alpha}{\PP^1_t}$ is non-trivial.

\underline{Step 2:} (A correspondence $\overline{Z}\subset X\times X$).
Let $\Delta_{\widetilde{U}}$ be the diagonal in 
$\widetilde{U}\times \widetilde{U}$ and set 
$Z:=\Delta_{\widetilde{U}}\cup [E^0_i\times_{\widetilde{B}_i} E^0_i]$.
Let $\overline{Z}$ be the closure of $Z$ in $X\times X$ and denote
by $\overline{Z}_*:H^2(X,\Integers)\rightarrow H^2(X,\Integers)$
the homomorphism induced by the correspondence. 
Let $(e^\vee_i)^\perp\subset H^2(X,\Integers)$ be the sublattice
annihilated by cup product with the class of $\PP^1_t$.
We clearly have that $\int_Xe_ie^\vee_i=
\deg\left(\omega_{\PP^1_t}\right)=-2$, 
$\overline{Z}_*(e_i)=-e_i$, and 
$\overline{Z}_*$ restricts to $(e^\vee_i)^\perp$ as the identity.
Hence, $\overline{Z}_*$ is an involution. 

\underline{Step 3:} (The homomorphism $\overline{Z}_*$ as a limit).
Our choice of the family $\X_C\rightarrow C$ is such, that 
$N_{E^0_i/\X^0}$ restricts to $\PP^1_t$ as a non-trivial extension of 
$\StructureSheaf{\PP^1_t}$ by $\omega_{\PP^1_t}$. 
There is a unique such extension, and so the restriction of $N_{E^0_i/\X^0}$
is isomorphic to 
$\StructureSheaf{\PP^1_t}(-1)\oplus\StructureSheaf{\PP^1_t}(-1)$.
Let $\widehat{\X}$ be the blow-up of $\X^0_C$ centered at $E^0_i$, 
and $\widehat{E}$ the exceptional divisor of $\widehat{\X}\rightarrow \X_C^0$. 
Then 
$\widehat{E}$ is a $[\PP^1\times\PP^1]$ bundle over $\widetilde{B}_i$.
Furthermore, $\StructureSheaf{\widehat{\X}}(\widehat{E})$ 
restricts as $\StructureSheaf{\PP^1\times\PP^1}(-1,-1)$ to each 
fiber\footnote{We have the three relations
$\omega_{\widehat{\X}}\cong\omega_{\X^0_C}(\widehat{E})$,
$\left(\omega_{\widehat{\X}}\restricted{\right)}{\widehat{E}}\cong
\omega_{\widehat{E}}\otimes\StructureSheaf{\widehat{E}}(-\widehat{E})$, and
$\omega_{\widehat{E}}\cong\omega_{\widehat{E}/E^0_i}\otimes\omega_{E^0_i}$.
Back substitution yields
$\StructureSheaf{\widehat{E}}(2\widehat{E})\cong
\omega_{\widehat{E}/E^0_i}\otimes\omega_{E^0_i}$.}.
Hence, there exists a $C$-morphism $\widehat{\X}\rightarrow \X'$ 
contracting $\widehat{E}$ along the second ruling 
\cite{artin-algebraization,fujiki-nakano}. 
Note that the fiber of $\psi':\X'\rightarrow C$ over $0\in C$ is 
naturally isomorphic to $\widetilde{U}$. 

The morphism 
$\widehat{\X}\rightarrow \X^0_C\times \X'$ is an embedding and we denote its 
image by  $\widehat{\X}$ as well. Then 
$\widehat{\X}\cap [\widetilde{U}\times\widetilde{U}]=Z$. 
The restriction homomorphism 
$H^2(X,\Integers)\rightarrow H^2(\widetilde{U},\Integers)$ is 
an isomorphism, by Lemma \ref{lemma-cohomology-of-X-versus-Y}
part \ref{lemma-item-iota-U-tilde-is-an-isom}.
We get that both $R^2_{\psi^0_*}(\Integers)$ and $R^2_{\psi'_*}(\Integers)$
are local systems over $C$.
Each of the two projections, from the correspondence $\widehat{\X}$ to
$\X^0_C$ and $\X'$, is a proper morphism. 
Thus, the correspondence $\widehat{\X}$
induces a homomorphism
\begin{equation}
\label{eq-X-widehat}
\widehat{\X}_* \ : \ R^2_{\psi^0_*}(\Integers) \ \ \ \longrightarrow \ \ \ 
R^2_{\psi'_*}(\Integers), 
\end{equation}
which is clearly an isomorphism of weight $2$
variations of Hodge structures. 
Similarly, we get an induced homomorphism
$Z_*:H^2(\widetilde{U},\Integers)\rightarrow H^2(\widetilde{U},\Integers)$,
which is conjugated to $\overline{Z}_*$ via the restriction isomorphism.
We conclude that the involution $\overline{Z}_*$
of $H^2(X,\Integers)$ is the limit of isometries, and is hence an isometry.

\underline{Step 4:} We prove that $\overline{Z}_*$ is 
the reflection by $e_i$. 
The degree of $e_i$ is negative, by Lemma \ref{lemma-L}.
Let $e_i^\perp\subset H^2(X,\Integers)$ be the orthogonal complement, 
of the class $e_i$, with respect to the Beauville-Bogomolov form. 
We have seen that the involution $\overline{Z}_*$
sends $e_i$ to $-e_i$ and acts as the identity on $(e^\vee_i)^\perp$. 
Hence $(e^\vee_i)^\perp=e_i^\perp$, 
since $\overline{Z}_*$ acts as an isometry and the eigenspaces of an isometry 
are pairwise orthogonal. 
Hence, $\overline{Z}_*$ is the reflection by $e_i$.
On the other hand, $\overline{Z}_*(x)=x+(e^\vee_i,x)e_i$, for all 
$x\in H^2(X,\Integers)$, by definition of $\overline{Z}$.
The equality $(e^\vee_i,\bullet)=\frac{-2(e_i,\bullet)}{(e_i,e_i)}$
follows.

\underline{Step 5:}
Let $g$ be the automorphism of $Def(X)$, induced by the Hodge-isometric 
reflection $\overline{Z}_*$, and $\hat{g}$ its completion at $0$.
We prove next the equivalence of two formal deformations
of $\widetilde{U}$: One is the completion of 
$\psi':\X'\rightarrow C$ along the fiber $\widetilde{U}$
and the other is obtained similarly from the pullback 
$g^*(\X^0_{g(C)}):=C\times_{g(C)}\X^0_{g(C)}$ 
to $C$
via $g$ of the ``family'' $\X^0_{g(C)}\rightarrow g(C)$.

Let $\widehat{C}$ be the completion of $C$ at zero.
The restrictions of $\X^0_C$ and $\X'$ to $\widehat{C}$ induces two classifying
maps, $\kappa_{\X^0}$ and $\kappa_{\X'}$, from $\widehat{C}$ to the hull 
$\DDef(\widetilde{U})$ of the deformation functor $D_{\widetilde{U}}$. 
The isomorphism (\ref{eq-X-widehat}) of variations of Hodge structures
implies that the outer square of the following diagram commutes.
\[
\begin{array}{ccccccc}
\widehat{C} & 
\LongRightArrowOf{\kappa_{\X^0}} & \DDef(\widetilde{U}) & 
\LongRightArrowOf{\iota_X\circ \rho_X^{-1}} &
Def(X) & \LongRightArrowOf{p} & \Omega 
\\
= \ \downarrow \ \hspace{2ex} & & \hat{g} \ \downarrow \ \hspace{1ex} & &
g \ \downarrow \ \hspace{1ex} & & \overline{Z}_* \ \downarrow \ \hspace{3ex}
\\
\widehat{C} & 
\LongRightArrowOf{\kappa_{\X'}} & \DDef(\widetilde{U}) & 
\LongRightArrowOf{\iota_X\circ \rho_X^{-1}} &
Def(X) & \LongRightArrowOf{p} & \Omega
\end{array}
\]
The right and middle squares commute, by definition. 
The period map $p$ is an embedding, by the Local Torelli Theorem 
\cite{beauville}.
Hence, the left square commutes as well, and we get the equality 
$\kappa_{\X'}=\hat{g}\circ\kappa_{\X^0}$.
Consequently, the completion along $\widetilde{U}$ of 
$\psi':\X'\rightarrow C$ 
is equivalent to that of $g^*(\psi^0):g^*(\X^0_{g(C)})\rightarrow C$
as a formal deformation of $\widetilde{U}$ over $\widehat{C}$.

\underline{Step 6:} 
A {\em modification} of a complex analytic space is a commutative diagram
\begin{equation}
\label{eq-modification}
\begin{array}{ccc}
\widetilde{U} & \subset & \widetilde{A}
\\
\pi \ \downarrow \ \hspace{2ex} & & \hspace{2ex} \ \downarrow \ \nu
\\
U & \subset & A,
\end{array}
\end{equation}
where $U$ and $\widetilde{U}$ are closed analytic subspaces
of $A$ and $\widetilde{A}$, the morphisms $\nu$ 
and $\pi$ are proper and surjective, 
and $\nu$ restricts as an isomorphism from 
$\widetilde{A}\setminus\widetilde{U}$ onto $A\setminus U$. 
We construct next a modification of $\widetilde{A}:=\X'$
to obtain a space $\Y':=A$. 
There is also a notion of a {\em formal modification} 
\cite{ancona-tomassini}, Ch I Section \S{2}.
We will only use the fact that the formal completion
of Diagram (\ref{eq-modification}) along $U$ and $\widetilde{U}$
is a formal modification. The following is an anaytic version 
of an algebraic result of M. Artin
\cite{artin-algebraization}.

\begin{thm}
\label{thm-C}
(\cite{ancona-tomassini}, Theorem C)
Let $\widetilde{U}$ be a closed analytic subspace of an analytic space 
$\widetilde{A}$, and $\widetilde{\AA}$ the formal completion of 
$\widetilde{A}$ along $\widetilde{U}$. Suppose that there exists a formal 
modification 
\begin{equation}
\label{eq-formal-modification}
\begin{array}{ccc}
\widetilde{U} & \subset & \widetilde{\AA}
\\
\pi \ \downarrow \ \hspace{2ex} & & \hspace{2ex} \ \downarrow \ \hat{\nu}
\\
U & \subset & \AA,
\end{array}
\end{equation}
with the additional hypothesis that $\AA$ is locally the formal completion
of an analytic space along a closed analytic subset. Then there exist
an analytic space $A$, containing $U$ as a closed analytic subset, 
and a modification (\ref{eq-modification}),
such that $\hat{\nu}$ is the completion of $\nu$ along $\widetilde{U}$.
The pair $(A,\nu)$ is unique, up to an isomorphism. 
\end{thm}

Given a subvariety $S$ of $Def(X)$, containing $0$, 
let $\Y_S$ be the restriction to $S$ of $f^*\Y$ and 
$\Y^0_S:=\Y_S\setminus\Sigma_0$, so that
the fiber of $\Y^0_S$ over $0\in S$ is $U:=Y\setminus\Sigma_0$. 

\begin{claim} 
\label{claim-existence-of-a-contraction}
There exists a normal analytic space $\Y'$,
admitting a morphism $\bar{\psi}':\Y'\rightarrow C$, 
and a proper surjective $C$-morphism 
$\nu':\X'\rightarrow \Y'$, having the following properties:
1) $\nu'$  restricts to the fibers over 
$0\in C$ as $\pi:\widetilde{U}\rightarrow U$. 2) $\nu'$ 
is an isomorphism over $\Y'\setminus U_{sing}$.
3) The completion of $\bar{\psi}'$ along $U$ is equivalent to
that of $g^*(\Y^0_{g(C)})\rightarrow C$.
\end{claim}

\begin{proof}
The formal completion of $\X'$ along the fiber $\widetilde{U}$ 
was shown to be isomorphic to the completion of
$g^*(\X^0_{g(C)})$ along $\widetilde{U}$, via a $\hat{C}$-morphism.
We may thus identify the two completions and denote both by 
$\widetilde{\AA}$.  
Let $\tilde{\nu}:\X\rightarrow f^*\Y$
be the natural lift of $\nu$, given by the universal property of the
fiber product $f^*\Y:=\Def(X)\times_{\Def(Y)}\Y$. 
Now $\tilde{\nu}$ restricts to a contraction
$g^*(\X^0_{g(C)})\rightarrow g^*(\Y^0_{g(C)})$.
Let $\hat{\nu}:\widetilde{\AA}\rightarrow \AA$ be 
the formal completion of the latter contraction along
$\widetilde{U}$ and $U$. We obtain a formal modification
(\ref{eq-formal-modification}). 
We apply Theorem \ref{thm-C} 
with $\widetilde{A}:=\X'$ and
conclude the existence of the morphism $\nu':\X'\rightarrow \Y'$,
satisfying property 1). $\Y'$ is normal, since $g^*(\Y^0_{g(C)})$ is
(see \cite{hartshorne}, Ch. III, Lemma 9.12).
Property 2) follows from the smoothness of $\X'$ and $\Y'\setminus U_{sing}$,
and the fact that $\nu':\X'\setminus \widetilde{U}\rightarrow \Y'\setminus U$
is an isomorphism.
Property 3) is clear, by construction.
\end{proof}

\underline{Step 7:} 
We are now ready to apply Lemma 
\ref{lem-criterion-for-automorphism-to-be-deck-trans}.
The assumptions of Lemma 
\ref{lem-criterion-for-automorphism-to-be-deck-trans}
would be verified, once we prove the equivalence
of the completions along $U$ of $\Y^0_C$ and $\Y'$
(here we use property 3 in Claim \ref{claim-existence-of-a-contraction}). 
The composite morphism $\widehat{\X}\rightarrow \X^0_C
\RightArrowOf{\tilde{\nu}}\Y^0_C$ is 
clearly also the composition of $\widehat{\X}\rightarrow \X'$ and 
a $C$-morphism $\nu'':\X'\rightarrow \Y^0_C$. 

Set
$
a:= \nu'\times\nu'': \X' \rightarrow \Y'\times \Y^0_C.
$
We show that $a$ maps $\X'$ onto the graph of a $C$-isomorphism between
$\Y'$ and $\Y^0_C$. 
The fiber 
$\Y_{f(t)}$ is smooth, for all $t$ in $C\setminus \{0\}$, by our choice of $C$.
The statement is thus clear over $C\setminus \{0\}$.  
The restriction of $a$ to the special fiber 
is the morphism $\pi\times \pi:\widetilde{U}\rightarrow U\times U$, 
which maps $\widetilde{U}$ onto the diagonal in $U\times U$.
The morphism $a$ is proper, so its image is supported by a closed
subvariety $\widehat{\Y}$, 
which maps bijectively onto each of $\Y'$ and $\Y_C^0$.
It remains to show that each of these bijective morphisms 
induces an isomorphism of the structure sheaves. 
This question is infinitesimal, and so 
we may pass to the algebraic category. Note that 
$\Y^0_C$ is normal, by a lemma of Hironaka
\cite{hartshorne}, Ch. III, Lemma 9.12. 
Each of the projections from $\widehat{\Y}$, to each of the factors
$\Y'$ and $\Y^0_C$, is a proper morphism, since $\nu'$ and $\nu''$ are.
$\widehat{\Y}$ is thus the graph of an isomorphism, by Lemma
\ref{lemma-graph-of-isomorphism}. This completes the proof of Lemma
\ref{lem-galois-reflection-via-flop}.
\end{proof}

\begin{lem}
\label{lemma-graph-of-isomorphism}
Let $Y_1$, $Y_2$ be normal schemes of finite type over $\ComplexNumbers$
and $\widehat{Y}\subset Y_1\times Y_2$ a closed integral subscheme.
Assume that the projection $\pi_i:\widehat{Y}\rightarrow Y_i$
is a proper and bijective morphism, for $i=1,2$.
Then $\widehat{Y}$ is the graph of an isomorphism from $Y_1$ onto $Y_2$. 
\end{lem}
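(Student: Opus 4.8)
The plan is to show that the first projection $\pi_1 : \widehat{Y} \to Y_1$ is an isomorphism; by symmetry the same holds for $\pi_2$, and then $\pi_2 \circ \pi_1^{-1} : Y_1 \to Y_2$ is the desired isomorphism with graph $\widehat{Y}$. The point is that a proper bijective morphism onto a normal variety, in characteristic zero, is automatically an isomorphism. First I would record that $\pi_1$ is proper (by hypothesis) and quasi-finite (being bijective on points), hence finite, so $(\pi_1)_* \StructureSheaf{\widehat{Y}}$ is a coherent sheaf of $\StructureSheaf{Y_1}$-algebras and $\widehat{Y} = \Spec_{Y_1}\left((\pi_1)_*\StructureSheaf{\widehat{Y}}\right)$. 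It therefore suffices to prove that the natural map $\StructureSheaf{Y_1} \to (\pi_1)_*\StructureSheaf{\widehat{Y}}$ is an isomorphism of sheaves.

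For that I would work on the level of function fields and then use normality. Since $\widehat{Y}$ is integral and $\pi_1$ is dominant and finite, the function field $K(\widehat{Y})$ is a finite extension of $K(Y_1)$. Because we are over $\ComplexNumbers$ and $\pi_1$ is bijective on closed points, this extension is purely inseparable, hence trivial in characteristic zero: $K(\widehat{Y}) = K(Y_1)$. (Alternatively one can invoke Zariski's Main Theorem directly: a finite birational morphism onto a normal scheme is an isomorphism.) Thus $\pi_1$ is birational and finite, with $Y_1$ normal; the inclusion $\StructureSheaf{Y_1} \hookrightarrow (\pi_1)_*\StructureSheaf{\widehat{Y}}$ is then an inclusion of $\StructureSheaf{Y_1}$-modules inside the constant sheaf $K(Y_1)$, and $(\pi_1)_*\StructureSheaf{\widehat{Y}}$ is a coherent $\StructureSheaf{Y_1}$-algebra, hence integral over $\StructureSheaf{Y_1}$. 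By normality of $Y_1$ every such integral element already lies in $\StructureSheaf{Y_1}$, so the inclusion is an equality. Therefore $\pi_1$ is an isomorphism.

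Finally, applying the same argument to $\pi_2$ shows $\pi_2$ is an isomorphism, and $\widehat{Y}$ is the graph of $\psi := \pi_2 \circ \pi_1^{-1}$, which is an isomorphism $Y_1 \xrightarrow{\cong} Y_2$. \EndProof

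\medskip
\noindent\emph{Remark on the main obstacle.} The only genuinely non-formal input is that bijectivity forces the finite extension $K(\widehat{Y})/K(Y_1)$ to be trivial; this is where characteristic zero (or at least separability, guaranteed by reducedness together with bijectivity on the generic point and perfectness of $\ComplexNumbers$) is used, and it is exactly the content of Zariski's Main Theorem for finite morphisms onto normal bases. Everything else — reducing to a statement about $(\pi_1)_*\StructureSheaf{\widehat{Y}}$, and concluding via integral closure in a normal ring — is routine commutative algebra.
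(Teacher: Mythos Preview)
Your proof is correct and follows essentially the same route as the paper: reduce to showing that $\StructureSheaf{Y_i}\to(\pi_i)_*\StructureSheaf{\widehat{Y}}$ is an isomorphism, observe that $\pi_i$ is finite so the pushforward is a coherent algebra, argue that the function fields coincide, and conclude by normality (integral closure). The paper's proof (citing \cite{hartshorne}, Corollary III.11.4) is terser---it simply asserts that the quotient fields agree---whereas you spell out explicitly that bijectivity in characteristic zero forces the finite field extension to be trivial; this is a welcome clarification, not a different argument.
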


\begin{proof} (See the proof of 
\cite{hartshorne}, Corollary III.11.4)
The projection $\pi_i:\widehat{Y}\rightarrow Y_i$ is clearly a homeomorphism,
since $\pi$ is bijective continuous and closed. 
It suffices to prove that the sheaf homomorphism
$\StructureSheaf{Y_i}\rightarrow \pi_{i,*}\StructureSheaf{\widehat{Y}}$
is an isomorphism. The question is local, so we may assume that 
$Y_i=Spec(A_i)$ and $\widehat{Y}=Spec(B)$. $B$ is a finitely generated
$A_i$-module, since $\pi_{i,*}\StructureSheaf{\widehat{Y}}$ is a coherent
$\StructureSheaf{Y_i}$ module. $A_i$ and $B$ have the same quotient field,
and $A_i$ is integrally closed. Consequently $A_i=B$.
\end{proof}

Keep the notation of Step 1 of the proof of 
Lemma \ref{lem-galois-reflection-via-flop}.

\begin{cor}
\label{cor-galois-reflection-via-flop}
The complex manifold $g^*(\X^0_{g(C)}):=C\times_{g(C)}\X^0_{g(C)}$ 
is isomorphic to the flop of $\X^0_C$
along $E_i^0$ via a $C$-isomorphism.
\end{cor}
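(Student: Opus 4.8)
The space to be matched is precisely the $\X'$ constructed in Step~3 of the proof of Lemma~\ref{lem-galois-reflection-via-flop}, which was produced there as the flop of $\X^0_C$ along $E^0_i$ (blow up the codimension‑two center $E^0_i\subset\X^0_C$, then contract the exceptional $[\PP^1\times\PP^1]$‑bundle along its second ruling; the center lies in the special fibre $\widetilde U=(\X^0_C)_0$, so $\X'_0\cong\widetilde U$). The plan is to exhibit $\X'$ and $g^*(\X^0_{g(C)})$ as two families over $C$ that carry proper $C$‑morphisms onto one and the same space $\Y^0_C$ and whose formal completions along $\widetilde U$ agree, and then to pass from the latter to a $C$‑isomorphism by the graph‑closure argument already used in Step~7.

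First I would assemble the two contractions onto $\Y^0_C$. Step~7 produces a proper $C$‑morphism $\nu'':\X'\to\Y^0_C$ (equivalently $\nu':\X'\to\Y'$ composed with the isomorphism $\Y'\cong\Y^0_C$ obtained there), restricting over $0$ to $\pi:\widetilde U\to U$. On the other side, since Lemma~\ref{lem-galois-reflection-via-flop} is now proven, $g\in G$, so $f\circ g=f$; hence $g$ carries $C^0:=C\setminus\{0\}$ into $V$, one has $f(g(t))=f(t)$, the natural identifications give $g^*(\Y^0_{g(C)})=\Y^0_C$, and the pullback of $\tilde\nu$ is a proper $C$‑morphism $g^*(\X^0_{g(C)})\to\Y^0_C$, again restricting to $\pi$ over $0$. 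Because the flopping center sits in the special fibre, $\X'|_{C^0}=\X^0_{C^0}$ canonically; combining this with the above identifications and with $\tilde\nu$ over $V$ yields a canonical $C^0$‑isomorphism $\X'|_{C^0}\cong g^*(\X^0_{g(C)})|_{C^0}$ over $\Y^0_{C^0}$. Next, Step~5 gives $\kappa_{\X'}=\widehat g\circ\kappa_{\X^0}$ as morphisms $\widehat C\to\DDef(\widetilde U)$; the right‑hand side classifies the completion of $g^*(\X^0_{g(C)})$ along $\widetilde U$, so there is an isomorphism $\widehat\phi$ of formal deformations of $\widetilde U$ over $\widehat C$ between the completions of $\X'$ and of $g^*(\X^0_{g(C)})$ along $\widetilde U$. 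By definition of equivalence of deformations $\widehat\phi$ restricts to $\mathrm{id}_{\widetilde U}$ over $0$; since $D_U$ is pro‑representable (Corollary~\ref{cor-hull-of-deformation-functor-of-open-set}, i.e.\ $H^0(U,TU)=0$), the automorphism of the completion $\widehat{\Y^0_C}$ of $\Y^0_C$ along $U$ induced by $\widehat\phi$ through the isomorphism of deformation functors of Proposition~\ref{prop-namikawa-commutative-diagram} is forced to be the identity, so $\widehat\phi$ is an isomorphism \emph{over} $\widehat{\Y^0_C}$.

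Finally I would form the closure $W$ of the graph of the $C^0$‑isomorphism inside the fibre product $\X'\times_{\Y^0_C}g^*(\X^0_{g(C)})$. Then $W$ is integral, and both projections $p_1:W\to\X'$, $p_2:W\to g^*(\X^0_{g(C)})$ are proper and restrict to isomorphisms over $C^0$. Over $0$ one has $W_0\subset\widetilde U\times_U\widetilde U$, and since $\pi$ is semismall \cite{kaledin-semi-small} the fibre product $\widetilde U\times_U\widetilde U$ is pure of dimension $2n$, so $W_0$ (of dimension $\geq2n$) is a union of irreducible components of $\widetilde U\times_U\widetilde U$. The open dense diagonal lies in $W_0$ (over the locus where both contractions are local isomorphisms, $W$ is the usual diagonal), and the formal isomorphism $\widehat\phi$ over $\widehat{\Y^0_C}$ identifies the completion of $W$ along $\widetilde U$ with the formal graph of $\widehat\phi$, whose central fibre is $\Delta_{\widetilde U}$; hence $W_0=\Delta_{\widetilde U}$ and carries no spurious ``flop'' component. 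Consequently $p_1$ and $p_2$ are proper bijections, so by Lemma~\ref{lemma-graph-of-isomorphism} (both targets being smooth, hence normal) $W$ is the graph of a $C$‑isomorphism $\X'\cong g^*(\X^0_{g(C)})$. Since $\X'$ is the flop of $\X^0_C$ along $E^0_i$, this is the assertion (and by Step~1 the flop is non‑trivial, i.e.\ $e_i\neq 0$, so the statement is not vacuous).

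The main obstacle is the identification $W_0=\Delta_{\widetilde U}$, equivalently the passage from a formal matching along the special fibre to a genuine $C$‑isomorphism. This requires both that the two contractions land in a common target $\Y^0_C$ over which the relevant morphisms are proper — the total spaces $\X'$, $g^*(\X^0_{g(C)})$ being non‑compact, so that a Grothendieck/Artin‑type existence argument must be applied relatively, exactly as in Step~6 via Ancona–Tomassini's Theorem~\ref{thm-C} — and that $\widehat\phi$ be shown to respect those contractions (the pro‑representability input above). Without the formal data of Step~5, $g^*(\X^0_{g(C)})$ could a priori be $\X^0_C$ itself rather than its flop: both are isomorphic to $\X^0_C$ over $C^0$ and both admit proper contractions onto $\Y^0_C$, and it is precisely the completion along $\widetilde U$ that distinguishes the flop.
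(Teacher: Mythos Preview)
The paper's proof is a direct appeal to the uniqueness theorem for dilatations (\cite{ancona-tomassini}, Theorem~D), the companion to Theorem~C already used in Step~6: the modification $\nu':\X'\to\Y'$ was constructed in Claim~\ref{claim-existence-of-a-contraction} precisely so that its formal completion along $(\widetilde U,U)$ is isomorphic, \emph{as a formal modification}, to that of $\tilde\nu:g^*(\X^0_{g(C)})\to g^*(\Y^0_{g(C)})$. Theorem~D then asserts that two analytic modifications whose formal completions are isomorphic are themselves isomorphic, and this is the corollary.

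Your route is different: you attempt to rerun the graph-closure argument of Step~7 one level up, with the smooth total spaces $\X'$ and $g^*(\X^0_{g(C)})$ in place of $\Y'$ and $\Y^0_C$. The preliminary reductions---that $g^*(\Y^0_{g(C)})=\Y^0_C$ once $g\in G$, the existence of the $C^0$-isomorphism over $\Y^0_{C^0}$, and that $\widehat\phi$ lies over $\widehat{\Y^0_C}$ because $H^0(U,TU)=0$---are all sound. The gap is the assertion that the completion of $W$ coincides with the formal graph of $\widehat\phi$, hence that $W_0=\Delta_{\widetilde U}$. The closure $W$ and the formal isomorphism $\widehat\phi$ are defined independently, one as an analytic Zariski closure over $C^0$, the other via the classifying map to $\DDef(\widetilde U)$, and you provide no mechanism linking them. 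Nothing you have written excludes an extra component of $W_0$ such as $E^0_i\times_{\widetilde B_i}E^0_i$; indeed, exactly this kind of component appears in the analogous closure taken \emph{without} first flopping (compare the correspondence $Z$ of Step~2). To pass from the mere existence of $\widehat\phi$ to a statement about the analytic $W$ you must algebraize $\widehat\phi$, and that algebraization is precisely Theorem~D. Once invoked, it yields the desired $C$-isomorphism directly, rendering the graph-closure detour superfluous.
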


\begin{proof}
Let $\X'$ be the flop of $\X^0_C$
along $E_i^0$, constructed in Step 3 of the proof of Lemma
\ref{lem-galois-reflection-via-flop}.
We have constructed the following two modifications in 
the proof of Lemma \ref{lem-galois-reflection-via-flop}:
\[
\begin{array}{ccc}
\widetilde{U} & \subset & g^*(\X^0_{g(C)})
\\
\pi \ \downarrow \ \hspace{2ex} & & \hspace{2ex} \ \downarrow \ \tilde{\nu}
\\
U & \subset & g^*(\Y^0_{g(C)})
\end{array}
\ \ \ \mbox{and} \ \ \ 
\begin{array}{ccc}
\widetilde{U} & \subset & \X'
\\
\pi \ \downarrow \ \hspace{2ex} & & \hspace{2ex} \ \downarrow \ \nu'
\\
U & \subset & g^*(\Y^0_{g(C)}),
\end{array}
\]
the left in Step 6 and the right in Step 7.
Their completions, along $\widetilde{U}$ and $U$,
are isomorphic formal modifications, 
by Claim \ref{claim-existence-of-a-contraction}.
The desired isomorphism $\X'\cong g^*(\X^0_{g(C)})$ follows from the
uniqueness part of the statement of the existence and uniqueness
of dilitations \cite{ancona-tomassini}, Theorem D.
\end{proof}

%
\subsection{Weyl groups as Galois groups}
We prove Theorem \ref{thm-G-is-isomorphic-to-product-of-weyl-groups} 
in this section, provided that all connected components of
$\Sigma\setminus \Sigma_0$ satisfy 
\preprint{the technical Assumption
\ref{assumption-no-orbits-of-two-adjacent-nodes} introduced below.
The proof of the Theorem is completed in section 
\ref{sec-folded-A-even-case-of-the-theorem}, dropping the assumption.}
\journal{Assumption
\ref{assumption-no-orbits-of-two-adjacent-nodes} introduced below.
The proof of the Theorem in the remaining special cases 
is outlined in section 
\ref{sec-brief-description-of-folded-A-2k-case}.
A detailed proof can be found in Lemma 4.24 of 
the preprint version of this paper \cite{preprint-version}.}

Let $B$ be a connected component of $\Sigma\setminus \Sigma_0$.
Then $Y$ has singularities along $B$ of type
$A_r$, $D_r$, or $E_r$, 
by Proposition \ref{prop-dissident-locus}.
Set $E_B:=\pi^{-1}(B)$. Choose a point $b$ in $B$. 
We adopt, throughout this section, the following:

\begin{assumption}
\label{assumption-no-orbits-of-two-adjacent-nodes}
If $Y$ has $A_n$ singularities along $B$, and $n$ is even, then 
$\pi_1(B,b)$ acts trivially on the set of 
irreducible components of $\pi^{-1}(b)$.
\end{assumption}

Each irreducible component $E_i^0$ of $E_B$ 
is a $\PP^1$-bundle $\pi_i:E^0_i\rightarrow \widetilde{B}_i$ over an 
unramified cover $u_i: \widetilde{B}_i\rightarrow B$, such that 
$u_i\circ \pi_i$ is equal to the restriction of $\pi$ to $E_i^0$.
This follows from 
Assumption \ref{assumption-no-orbits-of-two-adjacent-nodes}, and the 
classification of the automorphism groups of Dynkin diagrams
(\cite{humphreys}, Section 12.2 Table 1).

Let $\tau_B$ be the type of the Dynkin diagram of the singularity
of $Y$ along $B$. Then $\tau_B$ is also the type of the Dynkin 
diagram of the fiber $\pi^{-1}(b)$. The fundamental group 
$\pi_1(B,b)$ acts on the dual graph, via graph automorphism.
Let $\bar{\tau}^\vee_B$ be the type of the Dynkin diagram 
obtained by folding the Dynkin diagram of the fiber, 
via the action of $\pi_1(B,b)$, 
following the procedure recalled in section \ref{sec-folding}. 
The dual type is denoted by $\bar{\tau}_B$.
Let $E_i$ be the closure of $E_i^0$ in $X$
and set $e_i$ to be its class in $H^2(X,\Integers)$. 
Let $e^\vee_i$ be the class in $H^{4n-2}(X,\Integers)$
Poincare-dual to the fiber of $E_i^0$
over a point in $\widetilde{B}_i$. 
Set $\Lambda_B:={\rm span}_\Integers\{e_1, \dots, e_{\bar{r}}\}$ and
$\Lambda_B^\vee:={\rm span}_\Integers\{e^\vee_1, \dots, e^\vee_{\bar{r}}\}$,
where $\bar{r}$ is the rank of the root system of type $\bar{\tau}_B$.

\begin{lem}
\label{lem-Weyl-group-action}
\begin{enumerate}
\item
\label{lem-item-cartan-matrix}
We can rearrange the irreducible components $E_i$, so that the
matrix $-\int_Xe^\vee_ie_j$ is a Cartan matrix\footnote{The Cartan matrix
of a root system with fundamental basis $\{e_1^\vee, \dots e_{\bar{r}}^\vee\}$
has entry $e_i^\vee(e_j)$ in the $i$-th row and $j$-th column.
Note that we are folding the root system of the fiber.} 
of type $\bar{\tau}^\vee_B$.
\item
\label{lem-item-rank}
The lattices $\Lambda_B$ and $\Lambda_B^\vee$ both have rank $\bar{r}$.
\item
\label{lem-item-dual-root-systems}
The isomorphism 
$H^2(X,\RationalNumbers)\rightarrow H^{4n-2}(X,\RationalNumbers)$,
induced by the Beauville-Bogomolov pairing, 
restricts to an isomorphism from $\Lambda_B\otimes_\Integers\RationalNumbers$
onto $\Lambda_B^\vee\otimes_\Integers\RationalNumbers$, 
mapping the class $\frac{-2e_i}{(e_i.e_i)}$ to the class $e^\vee_i$.
\item
\label{lem-item-reflection-group-is-weyl-group}
Let $W_B$ be the subgroup of $G$, generated by the reflections
with respect to $e_i$, as in Lemma \ref{lem-galois-reflection-via-flop}.
Then $W_B$ is isomorphic to the Weyl group of type $\bar{\tau}_B$.
\item
\label{lemma-item-W-is-product-of-W-B}
Let $\B_0\subset \B$ be the subset consisting of connected
components $B$ of $\Sigma\setminus\Sigma_0$
satisfying Assumption
\ref{assumption-no-orbits-of-two-adjacent-nodes}.
The subgroup $W$ of $G$, generated by $\cup_{B\in\B_0}W_B$,
is isomorphic to $\prod_{B\in \B_0}W_B$.
\item
\label{lemma-item-G-equal-W}
If $\B_0=\B$, then $G=\prod_{B\in \B}W_B$.
\end{enumerate}
\end{lem}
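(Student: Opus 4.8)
The plan is to treat the six parts in order; parts (\ref{lem-item-cartan-matrix})--(\ref{lemma-item-W-is-product-of-W-B}) are short consequences of the folding computation of Section \ref{sec-folding}, of Lemma \ref{lem-galois-reflection-via-flop}, and of orthogonality, and the real work is in part (\ref{lemma-item-G-equal-W}). For part (\ref{lem-item-cartan-matrix}) I would fix a base point $b\in B$ and set $\Gamma:=\pi_1(B,b)$, acting by graph automorphisms on the Dynkin diagram of the fiber $\pi^{-1}(b)$, with orbits indexed by $\E_B$. Under Assumption \ref{assumption-no-orbits-of-two-adjacent-nodes} no $\Gamma$-orbit consists of two adjacent nodes, since such orbits occur only for even $A_n$, which the Assumption excludes; hence every orbit is of type (\ref{case-of-non-adjacent-rrots}) of Section \ref{sec-folding}. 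Now $\int_Xe_i^\vee e_j$ is the intersection in $X$ of a fiber $\PP^1_t\subset E_i^0$ with the divisor $E_j$; since $E_j$ meets a nearby fiber $\pi^{-1}(b')$ in exactly the union of the rational curves in the $\Gamma$-orbit indexing $j$, and the exceptional curves of an $ADE$ surface resolution span the opposite of a root lattice, this number is $-\sum_{f_{\tilde k}\in\Gamma\cdot f_{\tilde j}}(f_{\tilde i},f_{\tilde k})$ for $i\neq j$ and is $2$ for $i=j$. By Lemma \ref{lemma-folding} this is precisely the entry (in the row/column convention of the footnote) of the Cartan matrix of the folded root system, that is, of type $\bar\tau_B^\vee$.

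For part (\ref{lem-item-rank}), the classes $e_1,\dots,e_{\bar r}$ are linearly independent by Lemma \ref{lemma-cohomology-of-X-versus-Y}(\ref{lemma-item-corank-of-saturated-sublattice}), so $\Lambda_B$ has rank $\bar r$; the Cartan matrix of part (\ref{lem-item-cartan-matrix}) is of finite type, hence invertible over $\RationalNumbers$, so the pairing between the $e_i^\vee$ and the $e_j$ is perfect and the $e_i^\vee$ are independent too. Part (\ref{lem-item-dual-root-systems}) is immediate from Lemma \ref{lem-galois-reflection-via-flop}, which already sends the Beauville--Bogomolov image of $\frac{-2e_i}{(e_i,e_i)}$ to $e_i^\vee$ for each $i$; restricting that isomorphism to $\Lambda_B\otimes\RationalNumbers$ and invoking part (\ref{lem-item-rank}) gives the asserted isomorphism onto $\Lambda_B^\vee\otimes\RationalNumbers$. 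For part (\ref{lem-item-reflection-group-is-weyl-group}): by part (\ref{lem-item-dual-root-systems}) each Galois reflection of Lemma \ref{lem-galois-reflection-via-flop} acts on $\Lambda_B$ as the simple reflection $x\mapsto x-e_i^\vee(x)e_i$ of the root system with the Cartan matrix of part (\ref{lem-item-cartan-matrix}), a system of type $\bar\tau_B^\vee$ whose Weyl group is that of the dual type $\bar\tau_B$; since $L$, hence $\Lambda_B$, is negative definite (Lemma \ref{lemma-L}), one has the orthogonal splitting $H^2(X,\RealNumbers)=(\Lambda_B\otimes\RealNumbers)\oplus(\Lambda_B\otimes\RealNumbers)^\perp$ on which these reflections act as the simple reflections on the first summand and trivially on the second, so the subgroup of $O[H^2(X,\Integers)]$ they generate is isomorphic to the Weyl group of $\bar\tau_B$, and it lies in $G$ by Lemma \ref{lem-galois-reflection-via-flop}. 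For part (\ref{lemma-item-W-is-product-of-W-B}): if $B\neq B'$ then a fiber $\PP^1_t$ over $B$ is disjoint from every $E_j$ with $j\in\E_{B'}$, the two supports lying over the disjoint sets $B$ and $B'$ away from $\pi^{-1}(\Sigma_0)$, so $\int_Xe_i^\vee e_j=0$ and $\Lambda_B\perp\Lambda_{B'}$ by part (\ref{lem-item-dual-root-systems}); thus $L\otimes\RationalNumbers=\bigoplus_{B\in\B}\Lambda_B\otimes\RationalNumbers$ is an orthogonal direct sum, each $W_B$ acts on $\Lambda_B$ and trivially on its orthogonal complement, and the $W_B$, $B\in\B_0$, commute pairwise and generate their direct product inside $G$.

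It remains to prove part (\ref{lemma-item-G-equal-W}), where $\B_0=\B$, so $W\subseteq G$ by part (\ref{lemma-item-W-is-product-of-W-B}). By Proposition \ref{prop-G-embedds-in-G-L}, $G$ is a finite product of Weyl groups of finite type, acts trivially on $L^\perp=H^2(U,\Integers)$, and leaves each $\Lambda_B$ invariant; with Lemma \ref{lemma-galois-cover} (faithful action on $H^{1,1}(X,\RealNumbers)$) this shows $G$ acts faithfully on $L\otimes\RealNumbers$ as a real reflection group, hence is generated by its reflections. The plan is to show every reflection $s\in G$ is one of the Galois reflections of Lemma \ref{lem-galois-reflection-via-flop}. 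Such an $s$ acts trivially on $L^\perp$, so it is the reflection in a primitive class $e\in L$, and its fixed locus in $Def(X)$ is the hyperplane section $\{t:(p(t),e)=0\}$, a divisor through $0$. Since $G$ acts freely on the open set $V$ of Diagram (\ref{main-diagram}) (there $f$ is a genuine Galois covering, by the proof of Lemma \ref{lemma-galois-cover}), this fixed locus lies in $Def(X)\setminus V$, i.e. in the locus where $Y_{f(t)}$ is singular. On the other hand, Namikawa's local description of the cover $f$ identifies $Def(X)\setminus V$ with the union $\bigcup_{i\in\E}\{t:(p(t),e_i)=0\}$ --- the generic fiber $Y_t$ acquires $ADE$ singularities precisely along the hyperplanes cut out by the $e_i$. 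Since $Def(X)$ is a ball, each such hyperplane section is irreducible, so $e^\perp=e_i^\perp$ for some $i$; with $e,e_i$ primitive this forces $e=\pm e_i$, hence $s$ equals the Galois reflection attached to $E_i$ and lies in $W$. Therefore $G=\langle\text{reflections of }G\rangle\subseteq W\subseteq G$, i.e. $G=W=\prod_{B\in\B}W_B$.

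The step I expect to be the main obstacle is the identification of $Def(X)\setminus V$ with $\bigcup_{i\in\E}e_i^\perp$ --- equivalently, the assertion that $Y_{f(t)}$ is smooth whenever $p(t)$ is orthogonal to none of the $e_i$. This cannot be obtained formally, as in parts (\ref{lem-item-cartan-matrix})--(\ref{lemma-item-W-is-product-of-W-B}), and requires Namikawa's deformation theory of the pair $\pi:X\to Y$, together with the description of the correspondences $Z_g\subset X\times_YX$ in Lemma \ref{lemma-L-B-is-G-invariant}, which governs how the $ADE$ singularities of the nearby fibers specialize. A related subtlety to guard against is that, purely at the lattice level, reflections in classes that are not roots of the folded systems (for example a sum $e_i+e_j$ of roots from two orthogonal $A_1$ summands) can still preserve $\bigoplus_B\Lambda_B$; it is exactly the geometric control of $Def(X)\setminus V$ that keeps such elements out of $G$.
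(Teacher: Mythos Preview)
Your treatment of parts (\ref{lem-item-cartan-matrix})--(\ref{lemma-item-W-is-product-of-W-B}) is essentially the paper's, with the same ingredients (Lemma~\ref{lemma-folding}, Lemma~\ref{lem-galois-reflection-via-flop}, orthogonality of the $\Lambda_B$), and is correct.

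For part (\ref{lemma-item-G-equal-W}) you take a different route from the paper, and the gap you flag is real: the identification of $Def(X)\setminus V$ with $\bigcup_{i\in\E}e_i^\perp$ is not available at this point. One inclusion is immediate once $W\subset G$ is known (the reflection hyperplanes lie in the ramification locus), but the inclusion you need --- that the singular/ramification locus is \emph{no larger} than $\bigcup_i e_i^\perp$ --- is essentially the statement $G\subset W$ you are trying to prove. Nothing in Namikawa's paper or in the correspondence description of Lemma~\ref{lemma-L-B-is-G-invariant} gives you this discriminant computation directly; it would follow \emph{from} the theorem, not lead to it.

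The paper avoids this entirely with a short lattice-theoretic argument. The key input is Lemma~\ref{lemma-L-B-is-G-invariant}(\ref{lemma-item-class-of-Z-g}): every $g\in G$ acts on $H^2(X,\Integers)$ as $[Z_g]_*=\mathrm{id}+\sum_{i,j}a_{ij}\,e_i\otimes e_j^\vee$ with $a_{ij}\in\Integers_{\geq 0}$. Dualizing, $g$ acts on $\Lambda_B^*$ by $\lambda\mapsto\lambda+\sum_{i,j}a_{ij}\lambda(e_i)e_j^\vee$, which is congruent to $\lambda$ modulo $\Lambda_B^\vee$. Hence $G$ acts trivially on the fundamental group $\Pi_B=\Lambda_B^*/\Lambda_B^\vee$ of the root system. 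But $W_B$ is exactly the subgroup of $O(\Lambda_B)$ acting trivially on $\Pi_B$ (\cite{humphreys}, Section 13.4, Exercise 5), so the image $G_B$ of $G$ in $O(\Lambda_B)$ lies in $W_B$ for each $B$, and $G\subset\prod_B W_B=W$. This argument also disposes of your final worry about spurious reflections (e.g.\ in $e_i+e_j$ with $i,j$ in orthogonal $A_1$ summands): such a reflection does not fix $\Pi_B$, hence cannot arise from any $[Z_g]_*$.
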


Given a lattice $M$ of maximal rank in 
$\Lambda_B\otimes_\Integers\RationalNumbers$, 
we denote by $M^*$ the sublattice of 
$\Lambda_B^\vee\otimes_\Integers\RationalNumbers$, consisting of 
classes $x$, such that $(x,y)$ is an integer, for all $y\in M$.
The {\em fundamental group of the root system} is, by definition,
the group $\Pi_B:=(\Lambda_B^*/\Lambda_B^\vee)$.

\begin{proof}
\ref{lem-item-cartan-matrix}) This part follows immediately from 
the definition of an ADE-singularity along $B$,
if $\pi_1(B,b)$ acts trivially on the dual graph of the fiber
$\pi^{-1}(b)$. 
More generally, let $\F:=\{F_1, \dots, F_r\}$ be the set of irreducible 
components of $\pi^{-1}(b)$.
Denote by $f_i$ the node, corresponding to $F_i$, of the dual graph.
We regard $\{f_1, \dots, f_r\}$ as a basis of simple roots of the root
lattice of type $\tau_B$, which bilinear pairing was 
recalled in Section \ref{sec-folding}.
Let $[F_j]\in H^{4n-2}(X,\Integers)$ be the cohomology class of $F_j$.
Let $\Gamma$ be the image of $\pi_1(B,b)$ in the automorphism group
of the graph dual to the fiber $\pi^{-1}(b)$.
Note that if $F_i$ and $F_j$ belong to the same
$\Gamma$-orbit, then $[F_i]=[F_j]$.

Let $E_i^0$ be an irreducible component of $E_B$ and
$F_{\tilde{i}}$ an irreducible component of the fiber of $E_i^0\rightarrow B$
over $b$. Let $E_j^0$ and $F_{\tilde{j}}$ be another such pair. 
Then $\int_X[E_j][F_{\tilde{i}}]=-2$, if $i=j$, and 
\[
\int_X[E_j][F_{\tilde{i}}]
=
-\sum_{f_{\tilde{k}}\in 
\Gamma\cdot f_{\tilde{j}}}(f_{\tilde{k}},f_{\tilde{i}}),
\ \ \mbox{if} \ \ i\neq j,
\] 
by Assumption \ref{assumption-no-orbits-of-two-adjacent-nodes}.
Thus $-\int_X[E_j][F_{\tilde{i}}]$ is equal to the right hand side
of the equation in Lemma \ref{lemma-folding}.
Hence, we can order the orbit $\F/\Gamma$, so that the matrix 
$-\left(\int_X[E_j][F_{\tilde{i}}]\right)$
is the Cartan matrix of the folded Dynkin diagram.

\ref{lem-item-rank})
The sets $\{e_1, \dots, e_{\bar{r}}\}$
and $\{e^\vee_1, \dots, e^\vee_{\bar{r}}\}$ are linearly independent,
since the rank of the Cartan matrix is $\bar{r}$.

\ref{lem-item-dual-root-systems})
This part was proven in Lemma \ref{lem-galois-reflection-via-flop},
which applies by Assumption
\ref{assumption-no-orbits-of-two-adjacent-nodes}.

\ref{lem-item-reflection-group-is-weyl-group}) 
Part \ref{lem-item-dual-root-systems} implies that 
the basis $\{e_1, \dots, e_{\bar{r}}\}$
of $\Lambda_B$, 
endowed with minus the Beauville-Bogomolov pairing,
is a basis of simple roots for a root system of type $\bar{\tau}_B$.

\ref{lemma-item-W-is-product-of-W-B}) 
The lattice $L$ is negative definite and the
sub-lattices $\Lambda_B$, $B\in \B$, are pairwise orthogonal, yielding an
orthogonal direct sum decomposition 
$L\otimes_\Integers\RationalNumbers=
\oplus_{B\in\B}\Lambda_B\otimes_\Integers\RationalNumbers$. $W_B$
acts on $\Lambda_B$ via the reflection representation and it acts trivially on 
$\Lambda_{B'}$, if $B\neq B'$. 

\ref{lemma-item-G-equal-W})
The inclusion $W\subset G$ follows from part 
\ref{lemma-item-W-is-product-of-W-B}.
We prove the inclusion $G\subset W$. 
The inclusion $G\subset G_L$ was proven in Proposition 
\ref{prop-G-embedds-in-G-L}.
Let $G_B$ be the image of $G$ 
in $O(\Lambda_B)$ via the composition
$G\rightarrow G_L\rightarrow O(\Lambda_B)$.
It suffices to prove the inclusion $G_{B}\subset W_B$,
since $W=\prod_{B\in \B} W_B$.
The inclusion $G_{B}\subset W_B$ would follow, if we can prove that 
$G$ acts trivially on the fundamental group $\Pi_B$,
since $W_B$ is equal to the subgroup of $O(\Lambda_B)$, 
which acts trivially on $\Pi_B$, by 
\cite{humphreys}, Exercise 5 in Section 13.4. 
Fix $g\in G$. 
Lemma \ref{lemma-L-B-is-G-invariant} implies that $g$ acts on $\Lambda_B$
via the class $[Z_g]=I+\sum_{i=1}^r\sum_{j=1}^ra_{ij}e_i\otimes e_j^\vee$,
where $I$ is the identity linear transformation, 
and $a_{ij}$ are non-negative integers. 
Let $\lambda$ be an element of $\Lambda_B^*$. 
Then $g(\lambda)=\lambda+\sum_{i=1}^r\sum_{j=1}^ra_{ij}\lambda(e_i)e_j^\vee$,
which is congruent to $\lambda$ modulo $\Lambda_B^\vee$.
\end{proof}

\begin{defi}
\label{def-root-system-Lambda-B-reduced-case}
Let $B\in \B$ be a connected component satisfying Assumption 
\ref{assumption-no-orbits-of-two-adjacent-nodes}.
The root system $\Phi_B\subset \Lambda_B$ is the union of the
$W_B$-orbits of the classes $e_i$, $1\leq i \leq \bar{r}$, 
of the irreducible components of 
the closure of $\pi^{-1}(B)$.
The root system $\Phi_B^\vee\subset \Lambda_B^\vee$ is the union of 
$W_B$-orbits of the classes $e_i^\vee$, $1\leq i \leq \bar{r}$, 
of the irreducible components of the fiber $\pi^{-1}(b)$, $b\in B$.
\end{defi}

Root systems, as defined in section 9.2 of 
\cite{humphreys}, are {\em reduced}. See \cite{humphreys},
Section 12.2 Exercise 3, for the non-reduced root system of type $BC_n$. 
As a corollary of Lemma \ref{lem-Weyl-group-action} we get:

\begin{cor}
\label{corollary-dual-root-systems}
\begin{enumerate}
\item
\label{cor-item-dual-root-lattices}
$\Lambda_B$ is the root lattice of 
the root systems $\Phi_B\subset \Lambda_B$ and $\Lambda_B^\vee$ 
is the root lattice of the root system 
$\Phi_B^\vee \subset \Lambda_B^\vee$. Both root systems are reduced
and are dual to each other.
$\Phi_B$ has type $\bar{\tau}_B$. 
\item
\label{cor-item-direct-sum-decomposition-for-G2-F4-E8}
If the type $\bar{\tau}_B$ is $E_8$, $F_4$, or $G_2$,
then $H^2(X,\Integers)$ decomposes as the orthogonal direct sum
$\Lambda_B\oplus \Lambda_B^\perp$.
If, furthermore, $B=\Sigma\setminus\Sigma_0$, then
$\Lambda_B^\perp$ is equal to the subspace
$H^2(U,\Integers)$ of $H^2(X,\Integers)$, defined in 
Lemma \ref{lemma-cohomology-of-X-versus-Y} part \ref{lemma-item-saturated}.
\end{enumerate}
\end{cor}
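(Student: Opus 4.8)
The plan is to read off both parts of the corollary from Lemma \ref{lem-Weyl-group-action}, together with elementary root-system and lattice theory; no new geometric input is needed. For part \ref{cor-item-dual-root-lattices} I would argue as follows. By part \ref{lem-item-reflection-group-is-weyl-group} of Lemma \ref{lem-Weyl-group-action} the classes $e_1,\dots,e_{\bar r}$, equipped with minus the Beauville--Bogomolov pairing, form a basis of simple roots for a root system of type $\bar\tau_B$, and $W_B$ is its Weyl group. Since in a finite root system every root is conjugate under the Weyl group to a simple root, the set $\Phi_B=\bigcup_i W_B\cdot e_i$ of Definition \ref{def-root-system-Lambda-B-reduced-case} is exactly the root system of that type; as $\bar\tau_B$ ranges only over the types produced by the folding construction of section \ref{sec-folding} (the unfolded $ADE$ types and $B_n$, $C_n$, $F_4$, $G_2$), none of which is $BC_n$, the root system $\Phi_B$ is reduced, and $\Lambda_B=\mathrm{span}_\Integers\{e_i\}$ is its root lattice by construction. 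For the dual root system I would invoke the isomorphism $\psi\colon \Lambda_B\otimes_\Integers\RationalNumbers\to\Lambda_B^\vee\otimes_\Integers\RationalNumbers$ induced by the Beauville--Bogomolov pairing (Lemma \ref{lem-Weyl-group-action} part \ref{lem-item-dual-root-systems}): it is $W_B$-equivariant, because the monodromy operators generating $W_B$ are isometries preserving cup products, and it carries the abstract simple coroot $\frac{-2e_i}{(e_i,e_i)}$ to the geometric class $e_i^\vee$. Hence $\psi$ identifies the abstract coroot system of $\Phi_B$ (a reduced root system of type $\bar\tau_B^\vee$ whose root lattice is the coroot lattice) with $\Phi_B^\vee=\bigcup_i W_B\cdot e_i^\vee$ and with $\Lambda_B^\vee=\mathrm{span}_\Integers\{e_i^\vee\}$, giving all of part \ref{cor-item-dual-root-lattices}.

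For part \ref{cor-item-direct-sum-decomposition-for-G2-F4-E8} the key observation is that $E_8$, $F_4$ and $G_2$ are precisely the types whose Cartan matrix has determinant $1$, equivalently those for which the fundamental group $\Pi_B$ is trivial. Rather than trying to prove that $\Lambda_B$ is unimodular --- which is false for $G_2$, whose root lattice has discriminant group $\Integers/3\Integers$ --- I would exploit the pairing between $\Lambda_B$ and $\Lambda_B^\vee$ directly. Consider the homomorphism $\Phi\colon H^2(X,\Integers)\to\Hom_\Integers(\Lambda_B^\vee,\Integers)$ sending $\beta$ to the functional $c\mapsto\int_X c\,\beta$ restricted to $\Lambda_B^\vee$. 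By the identity $\int_X e_i^\vee\beta=\frac{-2(e_i,\beta)}{(e_i,e_i)}$ established in Lemma \ref{lem-galois-reflection-via-flop}, the kernel of $\Phi$ equals $\Lambda_B^\perp$. On the other hand, in the basis $\{e_i\}$ of $\Lambda_B$ and the basis of $\Hom_\Integers(\Lambda_B^\vee,\Integers)$ dual to $\{e_i^\vee\}$, the restriction $\Phi|_{\Lambda_B}$ is represented by the matrix $\bigl(\int_X e_i^\vee e_j\bigr)$, which is minus the Cartan matrix of type $\bar\tau_B^\vee$ by Lemma \ref{lem-Weyl-group-action} part \ref{lem-item-cartan-matrix}, and hence lies in $\mathrm{GL}(\bar r,\Integers)$ when $\bar\tau_B\in\{E_8,F_4,G_2\}$. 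Thus $\Phi|_{\Lambda_B}$ is an isomorphism onto $\Hom_\Integers(\Lambda_B^\vee,\Integers)$, so $\Phi$ is a split surjection with section $\Lambda_B$ and $H^2(X,\Integers)=\Lambda_B\oplus\ker\Phi=\Lambda_B\oplus\Lambda_B^\perp$; the sum is direct because the Beauville--Bogomolov form is negative definite on $\Lambda_B$ by Lemma \ref{lemma-L}, and orthogonal by the definition of $\Lambda_B^\perp$.

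Finally, this decomposition exhibits $\Lambda_B$ as a saturated sublattice of $H^2(X,\Integers)$. When $B=\Sigma\setminus\Sigma_0$ we have $\B=\{B\}$, so every codimension-one component of the exceptional locus of $\pi$ (none of which lies in $\pi^{-1}(\Sigma_0)$, by the codimension bounds of Proposition \ref{prop-dissident-locus} used in Lemma \ref{lemma-cohomology-of-X-versus-Y}) dominates $B$, whence $\Lambda_B$ is spanned by the classes of all such components; therefore the lattice $L$ of Lemma \ref{lemma-L} --- the saturation of that span --- equals the saturation of $\Lambda_B$, which is $\Lambda_B$ itself, and so $\Lambda_B^\perp=L^\perp=H^2(U,\Integers)$ by Lemma \ref{lemma-L}. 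I expect the only real subtlety to be exactly the choice just indicated: working through $\Lambda_B^\vee$ and the $\Integers$-invertibility of the Cartan matrix, instead of attacking the (non-unimodular) lattice $\Lambda_B$ head on; everything else is bookkeeping with the earlier lemmas.
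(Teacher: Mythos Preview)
Your proposal is correct and follows essentially the same approach as the paper. Both arguments hinge on the map $H^2(X,\Integers)\to(\Lambda_B^\vee)^*$ given by cup product (your $\Phi$, the paper's Poincar\'e-duality splitting), observe that its restriction to $\Lambda_B$ is governed by the Cartan matrix and hence is an integral isomorphism precisely when the fundamental group $\Pi_B$ is trivial, and identify the kernel with $\Lambda_B^\perp$ via Lemma \ref{lem-Weyl-group-action} part \ref{lem-item-dual-root-systems}; your treatment of part \ref{cor-item-dual-root-lattices} and of the final identification $\Lambda_B^\perp=L^\perp=H^2(U,\Integers)$ is slightly more explicit than the paper's, but the content is the same.
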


\begin{proof}
\ref{cor-item-dual-root-lattices})
$\Phi_B$ spans $\Lambda_B$ and $\Phi_B^\vee$ spans $\Lambda_B^\vee$, 
by construction.
The foldings, of every finite root system of type ADE, 
by any subgroup of its automorphism group, 
produces a reduced root system,
by the classification of the automorphism groups of 
Dynkin diagrams of ADE type (see \cite{carter}, Section 13.3). 

\ref{cor-item-direct-sum-decomposition-for-G2-F4-E8})
If $\bar{\tau}_B$ is $E_8$, $F_4$, or $G_2$, then 
the fundamental group $\Pi_B$ of the root system is trivial,
so that the sublattice $\Lambda_B^\vee$ of $H^{4n-2}(X,\Integers)$ 
is isometric to $\Lambda_B^*$. 
The extension $0\rightarrow \Lambda_B\rightarrow H^2(X,\Integers)\rightarrow 
H^2(X,\Integers)/\Lambda_B\rightarrow 0$ is split, by 
the Poincare-Duality isomorphism:
\[
H^{2}(X,\Integers)\cong H^{4n-2}(X,\Integers)^*\rightarrow (\Lambda_B^\vee)^*
\cong \Lambda_B.
\]
The kernel of the above homomorphism is the annihilator of 
$\Lambda_B^\vee$, which is equal to $\Lambda_B^\perp$,
by Lemma \ref{lem-Weyl-group-action}
part \ref{lem-item-dual-root-systems}. 
If $\Sigma\setminus\Sigma_0=B$, then 
$\Lambda_B^\perp=H^2(U,\Integers)$, by 
Lemma \ref{lemma-L}.
\end{proof}

%
\subsubsection{The saturation of the root lattice $\Lambda_B$}
Let $L_B$ be the saturation of the sublattice $\Lambda_B$ 
of $H^2(X,\Integers)$.
Let $L_B^\vee$ be the saturation of the sublattice 
$\Lambda_B^\vee$ of $H^{4n-2}(X,\Integers)$.
Let $\Abs{\Pi_B}$ be the cardinality of the fundamental group $\Pi_B$
of the root system.
We clearly have the flag
\[
\Lambda_B^\vee\subset L_B^\vee \subset L_B^* \subset \Lambda_B^*.
\]
We get the equality
\[
\Abs{L_B^\vee/\Lambda_B^\vee}\cdot \Abs{L_B/\Lambda_B} \cdot
\Abs{L_B^*/L_B^\vee}
\ \ \ = \ \ \ \Abs{\Pi_B}.
\]
$\Pi_B$ is determined by the type $\bar{\tau}_B$ of the folded root system 
as follows:\\
$A_r$: $\Integers/(r+1)\Integers$,  
$B_r$, $C_r$: $\Integers/2\Integers$, 
$D_r$: $\Integers/4\Integers$, if $r$ is odd, and 
$\Integers/2\Integers\times \Integers/2\Integers$, if $r$ is even,
$E_6$: $\Integers/3\Integers$, 
$E_7$: $\Integers/2\Integers$, 
$E_8$, $F_4$, $G_2$: trivial \cite{humphreys}, section 13.1.

\preprint{
The groups  $L_B/\Lambda_B$ and $L_B^\vee/\Lambda_B^\vee$ are invariants of 
the singularity, which need not be determined by the type of the 
folded root system, if the fundamental group $\Pi_B$ is non-trivial.

\begin{example}
For the singularity type $A_1$ we have three possibilities
$\Abs{L_B/\Lambda_B}=2$,
$\Abs{L_B^\vee/\Lambda_B^\vee}=2$, or
$\Abs{L_B^*/L_B^\vee}=2$.
Following are two examples.

a) Let $S$ be a $K3$ surface, $S^{[n]}$ the Hilbert scheme of length $n$ 
subschemes of $S$, $S^{(n)}$ the $n$-th symmetric product, and
$\pi:S^{[n]}\rightarrow S^{(n)}$ the Hilbert-Chow morphism.
Then $\pi$ is a contraction of type $A_1$, 
$B:=\Sigma\setminus\Sigma_0$ is irreducible, the exceptional divisor $E$
has class $e=2\delta$, where $\delta$ is integral, primitive, and 
$(\delta,\delta)=2-2n$ \cite{beauville}.
Hence, $L_B/\Lambda_B\cong\Integers/2\Integers$.

b) Let $\pi:X\rightarrow Y$ be a contraction of type $A_1$, 
such that the class $e$ of the exceptional divisor satisfies
$(e,e)=-2$, and $(e,x)=1$, for some $x\in H^2(X,\Integers)$. Then
$(e^\vee,x)=(e,x)$, so $\Abs{L_B/\Lambda_B}=\Abs{L_B^\vee/\Lambda_B^\vee}=1$,
and thus $\Abs{L_B^*/L_B^\vee}=2$.
The contraction of a $-2$ curve on a $K3$ surface $X$ provides such an 
example. Higher dimensional examples can be found in
\cite{yoshioka-examples-of-reflections}, or \cite{markman-reflections},
Corollary 3.19 in the case of Mukai vectors $v$ with $\chi(v)=0$.
\end{example}

%
\subsection{Weyl group actions via flops}

Let $B\in\B$ be a connected component satisfying Assumption 
\ref{assumption-no-orbits-of-two-adjacent-nodes}.
Let $g\in W_B$ and write $g$ as a word in the reflections with respect to 
the simple roots $\{e_1, \dots e_{\bar{r}}\}$
\[
g \ \ = \ \ \rho_{e_{i_k}}\rho_{e_{i_{k-1}}}\cdots \rho_{e_{i_1}}.
\]
Let $C\subset Def(X)$ be a connected Riemann surface containing $0$, such that
$C\setminus\{0\}$ is contained in the subset $V$ given in Diagram
\ref{main-diagram}. Denote by $\Phi_B\subset \Lambda_B$ the root system
(Definition \ref{def-root-system-Lambda-B-reduced-case}).
Assume further that $T_0C$ is not contained in the $+1$ 
eigenspace in $T_0Def(X)$ of any reflection with respect to 
a root in $\Phi_B$. 
Notice that the special fiber of the flop of $\X^0_C$ centered along
$E_i^0$, $1\leq i \leq \bar{r}$, is naturally identified with 
the fiber $\widetilde{U}$ of $\X^0_C$. Hence, we can talk about sequences of 
such flops. Furthermore, 
the following equality holds for every $h\in W_B$ and for every $E^0_i$:
\begin{equation}
\label{eq-pull-back-commutes-with-flops}
h^*(\mbox{flop of} \ \X^0_{h(C)} \ \mbox{along} \ E^0_i) \ \ = \ \ 
\mbox{flop of} \ h^*(\X^0_{h(C)}) \ \mbox{along} \ E^0_i.
\end{equation}

\begin{lem}
\label{lemma-Weyl-group-action-via-flops}
The complex manifold $g^*(\X^0_{g(C)})$ is isomorphic to the one 
obtained from $\X^0_C$ by the sequence of $k$ flops, starting with
$E^0_{i_1}$ and ending with $E^0_{i_k}$. 
\end{lem}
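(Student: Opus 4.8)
The plan is to argue by induction on the length $k$ of the word $g = \rho_{e_{i_k}}\cdots\rho_{e_{i_1}}$, using Corollary~\ref{cor-galois-reflection-via-flop} as the base case ($k=1$): there $g^*(\X^0_{g(C)})$ is the flop of $\X^0_C$ along $E^0_{i_1}$. For the inductive step, write $g = \rho_{e_{i_k}}\cdot g'$ with $g' = \rho_{e_{i_{k-1}}}\cdots\rho_{e_{i_1}}$ of length $k-1$, set $h:=\rho_{e_{i_k}}$, and note $g(C) = h(g'(C))$. First I would apply Corollary~\ref{cor-galois-reflection-via-flop}, but to the curve $g'(C)$ and the reflection $h$: this requires checking that $g'(C)$ satisfies the genericity hypothesis of Step~1 of Lemma~\ref{lem-galois-reflection-via-flop}, which holds because $T_0C$ avoids the $+1$-eigenspaces of all reflections in $\Phi_B$ by assumption, and $g'$ permutes $\Phi_B$, so $T_0(g'(C)) = g'(T_0C)$ still avoids the relevant eigenspace of $h$. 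Corollary~\ref{cor-galois-reflection-via-flop} then gives
\[
h^*\bigl(\X^0_{h(g'(C))}\bigr) \ \cong \ \text{flop of } \X^0_{g'(C)} \text{ along } E^0_{i_k},
\]
as $C$-manifolds, after identifying the base curves via $h$.

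Next I would pull this isomorphism back along $g'$ and invoke the compatibility~(\ref{eq-pull-back-commutes-with-flops}) with $h$ replaced by $g'$ (this is legitimate since $g'\in W_B$ and each $E^0_i$, $1\le i\le\bar r$, appears as an exceptional locus for the corresponding reflection): applying $(g')^*$ to the displayed isomorphism yields
\[
g^*\bigl(\X^0_{g(C)}\bigr) \ = \ (g')^* h^*\bigl(\X^0_{h(g'(C))}\bigr)
\ \cong \ \text{flop of } (g')^*\bigl(\X^0_{g'(C)}\bigr) \text{ along } E^0_{i_k}.
\]
By the inductive hypothesis $(g')^*(\X^0_{g'(C)})$ is the manifold obtained from $\X^0_C$ by the sequence of flops $E^0_{i_1},\dots,E^0_{i_{k-1}}$; since in each intermediate manifold the special fiber is canonically identified with $\widetilde U$ (as remarked before the lemma), flopping this along $E^0_{i_k}$ produces exactly the $k$-fold flop, completing the induction.

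The main subtlety I expect is not the formal bookkeeping but verifying that the genericity hypothesis on the tangent line propagates correctly through the word: one must know that $E^0_{i_k}$ is still a flopping contraction inside the intermediate manifold $(g')^*(\X^0_{g'(C)})$, i.e. that its special fiber still contains a $\PP^1$ with normal bundle $\StructureSheaf{}(-1)^{\oplus 2}$ over $\widetilde B_{i_k}$ after the earlier flops, and that this holds precisely when $T_0C$ avoids the $+1$-eigenspace of the reflection by the root $g'(e_{i_k})\in\Phi_B$. This is where the hypothesis ``$T_0C$ is not contained in the $+1$ eigenspace of any reflection with respect to a root in $\Phi_B$'' does its work, together with the computation in Step~1 of Lemma~\ref{lem-galois-reflection-via-flop} identifying the relevant obstruction with the restriction of the Kodaira--Spencer class to the fiber $\PP^1_t$. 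I would state this propagation carefully, perhaps as a short separate claim inside the induction, since it is the only place where the geometry (rather than pure group theory) enters.
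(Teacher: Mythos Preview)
Your proposal is correct and follows essentially the same inductive argument as the paper: induct on $k$, use Corollary~\ref{cor-galois-reflection-via-flop} as the base case, apply it again to the curve $g'(C)$ and the reflection $\rho_{e_{i_k}}$ in the inductive step, then pull back by $(g')^*$ and invoke~(\ref{eq-pull-back-commutes-with-flops}). Your extra care in verifying that the genericity hypothesis on $T_0C$ propagates to $g'(C)$ (because $W_B$ permutes $\Phi_B$ and the condition is stated for \emph{all} roots in $\Phi_B$) is a point the paper leaves implicit, but your reasoning there is exactly right.
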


\begin{proof}
The proof is by induction on $k$. The case
$k=1$ was proven in Corollary \ref{cor-galois-reflection-via-flop}:
$\rho^*_{e_i}(\X^0_{\rho_{e_i}(C)})$ is isomorphic to the flop of $\X^0_C$ 
along $E^0_i$. 
Assume that $k\geq 2$ and the statement holds for $k-1$. 
Set $\tilde{g}:=\rho_{e_{i_{k-1}}}\cdots \rho_{e_{i_1}}$.
Then $\tilde{g}^*(\X^0_{\tilde{g}(C)})$ is obtained from 
$\X^0_C$ by the sequence of flops starting with $E^0_{i_1}$
and ending with $E^0_{i_{k-1}}$. 
Apply next the case $k=1$ of the Lemma with the Riemann surface $\tilde{g}(C)$
and the reflection $\rho_{e_{i_k}}$ to get\\
$g^*(\X^0_{g(C)})=
\tilde{g}^*\rho_{e_{i_k}}^*(\X^0_{\rho_{e_{i_k}}\tilde{g}(C)})
=\tilde{g}^*(\mbox{the flop of} \ \X^0_{\tilde{g}(C)} \ \mbox{along} \
E^0_{i_k})$\\
$\stackrel{(\ref{eq-pull-back-commutes-with-flops})}{=} 
\mbox{the flop of} \ \tilde{g}^*(\X^0_{\tilde{g}(C)}) \ \mbox{along} \ 
E^0_{i_k}.$
\end{proof}

\begin{example}
\label{example-A-2}
Choose a connected component $B\in\B$, a point $b\in B$, and assume
that the type $\tau_B$, of the Dynkin diagram of the fiber
$\pi^{-1}(b)$, is $A_2$, and that $\pi_1(B,b)$ acts trivially on 
the Dynkin graph, so that $\bar{\tau}^\vee_B=A_2$ as well.
The existence of such a component $B$, for some contraction
$\pi:X\rightarrow Y$, with $2n$-dimensional $X$ and $Y$, for all $n$, 
is exhibited in section \ref{sec-Hilbert-scheme-of-K3-resolving-ADE}.
Given such a component $B$, then $E_B:=\pi^{-1}(B)$ is the union
of two irreducible components
$E^0_1$ and $E^0_2$, such that $E^0_i$ is a $\PP^1$ bundle over $B$
with section $E^0_1\cap E^0_2$. 
Set $e_i:=[E_i]\in H^2(X,\Integers)$, $i=1,2$, as above, 
and let $\rho_{e_i}\in G$
be the element of the Galois group corresponding to the reflection with 
respect to $e_i$.
Note the equalities
\[
\rho_{e_1}\rho_{e_2}\rho_{e_1}=\rho_{e_1+e_2}=\rho_{e_2}\rho_{e_1}\rho_{e_2}.
\]
Choose a smooth connected Riemann surface $C\subset Def(X)$, as in Lemma 
\ref{lemma-Weyl-group-action-via-flops}.
We see that flopping $\X^0_C$ first along $E^0_1$, 
then along $E^0_2$, and finally along $E^0_1$ again, yields the same 
``family''
$\rho_{e_1+e_2}^*(\X^0_{\rho_{e_1+e_2}(C)})$, as
flopping first along $E^0_2$, then along $E^0_1$, and finally along $E^0_2$ 
again.
\end{example}

The above Example will enable us to complete the proof of
Theorem \ref{thm-G-is-isomorphic-to-product-of-weyl-groups},
dropping Assumption \ref{assumption-no-orbits-of-two-adjacent-nodes}. 
We will further need the following Lemma. Keep the notation of the Example.
Let $\Z\subset 
\left[\X^0_C\times \rho^*_{e_1+e_2}(\X^0_{\rho_{e_1+e_2}(C)})\right]$ 
be the closure of the graph of the composite isomorphism
\[
[\X^0_C\setminus \widetilde{U}]\cong
[\Y^0_C\setminus U]\cong 
[\rho_{e_1+e_2}^*(\Y^0_{\rho_{e_1+e_2}(C)})\setminus U]\cong
[\rho^*_{e_1+e_2}(\X^0_{\rho_{e_1+e_2}(C)})\setminus \widetilde{U}].
\]
Let $Z$ be the fiber of $\Z$ over $0\in C$.

\begin{lem}
\label{lemma-correspondence-for-A-2-flop}
The reduced induced subscheme structure of $Z$ 
consists of the union of the following five irreducible 
components: The diagonal $\Delta_{\widetilde{U}}$, and the four components
$E^0_1\times_{B}E^0_1$, $E^0_2\times_{B}E^0_2$, 
$E^0_1\times_{B}E^0_2$, $E^0_2\times_{B}E^0_1$
of $E_B\times_{B}E_B$.
$Z$ is generically reduced along each of these components.
\end{lem}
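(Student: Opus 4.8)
The plan is to analyze the correspondence $\Z$ fiberwise, exploiting the fact (established in the $A_2$ discussion and Lemma \ref{lemma-Weyl-group-action-via-flops}) that $\rho^*_{e_1+e_2}(\X^0_{\rho_{e_1+e_2}(C)})$ is obtained from $\X^0_C$ by the sequence of flops $E^0_1$, then $E^0_2$, then $E^0_1$. First I would note that away from the special fiber everything is an honest isomorphism $X_t\cong X_{\rho_{e_1+e_2}(t)}$, so $\Z$ restricted to $C\setminus\{0\}$ is the graph of that isomorphism and $\Z$ is irreducible of dimension $2n+1$; hence its fiber $Z$ over $0$ is pure of dimension $2n$. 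The content is therefore to identify the $2n$-dimensional cycle-theoretic limit as a set of $\PP^1$-bundle-type correspondences and to check generic reducedness.

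The key computational step is a homological one: by Step 3 of the proof of Lemma \ref{lem-galois-reflection-via-flop}, a single flop along $E^0_i$ induces on $H^2(\widetilde{U},\Integers)$ the correspondence $\Delta_{\widetilde U} + E^0_i\times_{\widetilde B_i}E^0_i$, and composing the three such correspondences for $i_1=1,i_2=2,i_3=1$ and collecting terms gives the class $[Z]$ of the limit cycle. Concretely, $(\Delta + E^0_1\!\times E^0_1)\circ(\Delta + E^0_2\!\times E^0_2)\circ(\Delta + E^0_1\!\times E^0_1)$ expands, using $E^0_i\times_B E^0_i$ composed with $E^0_j\times_B E^0_j$ equals (up to the intersection multiplicity along the section $E^0_1\cap E^0_2$) $E^0_i\times_B E^0_j$, into exactly $\Delta_{\widetilde U}+E^0_1\!\times_B E^0_1+E^0_2\!\times_B E^0_2+E^0_1\!\times_B E^0_2+E^0_2\!\times_B E^0_1$, each with multiplicity $1$. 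Since $\rho_{e_1+e_2}=\rho_{e_1}\rho_{e_2}\rho_{e_1}$ acts on $H^2$ as the reflection in $e_1+e_2$ — equivalently swapping $e_1\leftrightarrow -e_1-e_2$ etc. — one checks the total class is forced: it sends $e_1\mapsto -e_2$, $e_2\mapsto -e_1$ and is the identity on $(e_1^\vee)^\perp\cap(e_2^\vee)^\perp$, which pins down the coefficients and confirms all five appear with coefficient exactly one. This simultaneously shows $Z$ supports each of the five listed components and nothing else in codimension zero, and that $[Z]$ equals the reduced cycle $\sum$ of the five, which gives generic reducedness along each.

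To make the above rigorous rather than merely numerical, I would argue geometrically: factor the composite birational map $\X^0_C\dashrightarrow\rho^*_{e_1+e_2}(\X^0_{\rho_{e_1+e_2}(C)})$ through the explicit blow-ups of Step 3 of Lemma \ref{lem-galois-reflection-via-flop}, so that a common resolution $\widehat{\X}$ of the graph is built by three successive blow-ups along $\PP^1$-bundles; push the exceptional loci forward to $\X^0_C\times\rho^*_{e_1+e_2}(\X^0_{\rho_{e_1+e_2}(C)})$ and intersect with $\widetilde U\times\widetilde U$. Each blow-up center restricts over $b\in B$ to a tree of rational curves of type $A_2$, and the bookkeeping of which component of $E_B\times_B E_B$ one lands in is exactly the combinatorics of the word $\rho_{e_1}\rho_{e_2}\rho_{e_1}$ acting on the two nodes; the diagonal $\Delta_{\widetilde U}$ comes from the locus where no flop is performed. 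Properness of all projections (the blow-ups are projective over $C$) ensures the limit is a well-defined effective cycle with the asserted support, and the transversality computation on a general fiber $\PP^1_t$ — where $N_{E^0_i/\X^0}$ restricts as $\StructureSheaf{}(-1)^{\oplus 2}$ by the genericity of $C$ — gives multiplicity one, hence generic reducedness.

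The main obstacle I anticipate is the last point: controlling the \emph{multiplicities} of the limit components, i.e. ruling out that, say, $\Delta_{\widetilde U}$ or one of the $E^0_i\times_B E^0_j$ appears with multiplicity $>1$ or that an unexpected lower-dimensional-looking but actually $2n$-dimensional piece (e.g. $E^0_1\times_B E^0_1\times_B E^0_2$-type fiber products along the section) shows up. This is handled by the genericity hypothesis on $T_0C$ (excluding the reflection hyperplanes of roots in $\Phi_B$), which forces the relevant normal bundles to be balanced and the relevant restricted Kodaira–Spencer classes to be nonzero, so each flop is a genuine Atiyah-type flop with reduced exceptional correspondence; combined with the homological constraint that $[Z]_*$ must equal the isometry $\overline{Z}_*=\rho_{e_1+e_2}$ of $H^2$, the five coefficients are uniquely determined to be $1$, completing the proof.
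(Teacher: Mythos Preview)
Your proposal is correct and follows essentially the same approach as the paper: factor $\rho_{e_1+e_2}=\rho_{e_1}\rho_{e_2}\rho_{e_1}$, write $\Z=\Z_3\circ\Z_2\circ\Z_1$ as the composite of the three flop correspondences, and observe that the special fiber $Z$ is generically the composition $Z_3\circ Z_2\circ Z_1$ of the fibers $Z_i=\Delta_{\widetilde U}\cup(E^0_i\times_B E^0_i)$, which yields the five listed components. The paper's proof is much terser and simply asserts this last step; your homological bookkeeping (forcing all multiplicities to be $1$ from $[Z]_*=\rho_{e_1+e_2}$ together with $Z\subset\widetilde U\times_U\widetilde U$) is a legitimate and more explicit way to pin down generic reducedness than the paper's bare ``generically equal to the composition'' claim.
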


\begin{proof}
Choose the equality $\rho_{e_1+e_2}=\rho_{e_1}\rho_{e_2}\rho_{e_1}$ and
consider the corresponding sequence of flops
\[
\begin{array}{ccccccccccl}
&\Z_1 &&\Z_2 &&\Z_3
\\
& \swarrow  \hspace{1ex} \searrow & & \swarrow \hspace{1ex} \searrow & &  
\swarrow \hspace{1ex} \searrow 
\\
\X^0_C & &  \rho^*_{e_1}(\X^0_{\rho_{e_1}(C)}) & &  
\rho^*_{e_1}\rho^*_{e_2}(\X^0_{\rho_{e_2}\rho_{e_1}(C)}) & & 
\rho^*_{e_1+e_2}(\X^0_{\rho_{e_1+e_2}(C)}).
\end{array}
\]
Each $\Z_i$ embeds as a correspondence in the product of its two successive
blow-downs. $\Z$ is the composite correspondence
$\Z=\Z_3\circ\Z_2\circ\Z_1$. 
Let $Z_i$ be 
the fiber of $\Z_i$ over $0\in C$. For $i=1,2$ we have 
$Z_i:=\Delta_{\widetilde{U}}\cup \left[E^0_i\times_B E^0_i\right]$
and $Z_3=Z_1$.
$Z$ is generically equal to the composition $Z_3\circ Z_2 \circ Z_1$
of the three correspondences, which is the union of the five irreducible
components as stated.
\end{proof}


%
\subsection{The folded $A_{2k}$ case of Theorem 
\ref{thm-G-is-isomorphic-to-product-of-weyl-groups}}
\label{sec-folded-A-even-case-of-the-theorem}
Drop Assumption \ref{assumption-no-orbits-of-two-adjacent-nodes}. 
Fix $B\in\B$, along which $Y$ has an $A_r$-singularity, $r$ even, 
a point $b\in B$, and assume that the image $\Gamma$ of $\pi_1(B,b)$, 
in the automorphism group of the graph of the fiber $\pi^{-1}(b)$, is
$\Integers/2\Integers$. 
Let $\Lambda_B\subset H^2(X,\Integers)$ be the lattice generated by the 
classes $e_j$ of the irreducible components $E_j$ of the closure of $E_B$.
Define $\Lambda_B^\vee$ as the lattice spanned by the classes 
$[F_j]$, where $F_j$ is one of the irreducible components of
the fiber of $E_j^0$ over $b$.
We will see below that $\Lambda_B$ is a unimodular 
(non-reduced) root lattice of type $BC_{\bar{r}}$, $\bar{r}=r/2$, 
and $\Lambda_B^\vee$ is isometric to the dual lattice $\Lambda_B^*$
(Remark \ref{rem-BC}). 

We define a sublattice $\tilde{\Lambda}_B$ of $\Lambda_B$. 
The irreducible components $F_j$ of the fiber $\pi^{-1}(b)$
come with two natural orderings, one the reverse of the other.
The two are interchanged by the non-trivial element of $\Gamma$. 
Choose one of the orderings and let $E_j^0$ 
be the irreducible component of $E_B$ containing the fiber $F_j$, 
$1\leq j \leq \bar{r}$. Then $E_{\bar{r}}^0$ is the 
irreducible component, such that the fiber of $E_{\bar{r}}^0$
over $b$ consists of the pair of middle components $F_{\bar{r}}$, 
$F_{\bar{r}+1}$ in the graph of the fiber.
Set $\tilde{e}_j:=[E_j]$, if $j\neq \bar{r}$,  
$\tilde{e}_{\bar{r}}:=2[E_{\bar{r}}]$, and let
$\tilde{\Lambda}_B:={\rm span}\{\tilde{e}_1, \dots, \tilde{e}_{\bar{r}}\}$. 
We set $\tilde{e}_j^\vee:=[F_j]$, regardless if $j$ and $\bar{r}$ 
are equal or not. 
Set $\tilde{\Lambda}^\vee_B:=
{\rm span}\{\tilde{e}_1^\vee, \dots, \tilde{e}_{\bar{r}}^\vee\}$. 
Then $\tilde{\Lambda}^\vee_B=\Lambda_B^\vee$.

\begin{lem}
\label{lem-galois-reflection-via-flop-A-2r-case}
The Beauville-Bogomolov degree $(\tilde{e}_{\bar{r}},\tilde{e}_{\bar{r}})$ 
is negative.
The isomorphism $H^2(X,\Integers)\rightarrow H^{4n-2}(X,\Integers)$,
induced by the Beauville-Bogomolov pairing, maps the class 
${\displaystyle 
\frac{-e_{\bar{r}}}{(e_{\bar{r}},e_{\bar{r}})}=
\frac{-2\tilde{e}_{\bar{r}}}{(\tilde{e}_{\bar{r}},\tilde{e}_{\bar{r}})}
}$ 
to the class
$\tilde{e}_{\bar{r}}^\vee$. Consequently, the reflection
\[
g(x) \ \ := \ \ x-
\frac{2(x,\tilde{e}_{\bar{r}})}
{(\tilde{e}_{\bar{r}},\tilde{e}_{\bar{r}})}\tilde{e}_{\bar{r}}
\]
has integral values, for all $x\in H^2(X,\Integers)$.
The Hodge-isometry $g$ induces an automorphism
of $Def(X)$, which belongs to the Galois group $G$.
\end{lem}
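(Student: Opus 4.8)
The plan is to mimic the strategy of Lemma \ref{lem-galois-reflection-via-flop}, but carried out in the quotient geometry where the folding is already built in. The reflection $g$ is attached to the "thick" class $\tilde{e}_{\bar{r}}=2[E_{\bar r}]$, and the point is that on the level of the contraction $\pi:X\to Y$ there is a genuine $A_1$-type configuration responsible for it: the two middle rational curves $F_{\bar r},F_{\bar r+1}$ in a fiber, together with the monodromy $\Integers/2\Integers$ of $\pi_1(B,b)$ swapping them, give a single rational curve class $\tilde{e}_{\bar r}^\vee=[F_{\bar r}]=[F_{\bar r+1}]$ in $H^{4n-2}(X,\Integers)$, while the divisor $E_{\bar r}^0$ is (by the discussion preceding the lemma, and the classification of Dynkin diagram automorphisms in \cite{humphreys}, Section 12.2) a conic bundle over $\widetilde{B}_{\bar r}$ whose generic fiber is two distinct lines meeting in a point — equivalently, $E_{\bar r}^0$ carries a double structure along its generic fiber from the point of view of the curve class $\tilde{e}_{\bar r}^\vee$, which is exactly the source of the factor $2$ in $\tilde{e}_{\bar r}=2[E_{\bar r}]$.

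First I would establish the numerical facts. Using Lemma \ref{lemma-L} I get that $(e_{\bar r},e_{\bar r})<0$, hence $(\tilde{e}_{\bar r},\tilde{e}_{\bar r})=4(e_{\bar r},e_{\bar r})<0$; and I would compute $\int_X e_{\bar r}\,\tilde{e}_{\bar r}^\vee$ by intersecting $E_{\bar r}^0$ with the class of one of the middle curves $F_{\bar r}$. A local analytic computation at a generic point of $B$ (reducing, as in Corollary \ref{cor-hull-of-deformation-functor-of-open-set} and \cite{reid}, Section 3.4, to the surface $A_{2\bar r}$-singularity with its $\Integers/2\Integers$-symmetry) shows that the two middle curves $F_{\bar r},F_{\bar r+1}$ each meet the divisor component $E_{\bar r}^0$ with total intersection number $-1$, so that $\int_X e_{\bar r}\tilde{e}_{\bar r}^\vee=-1$, giving $\int_X \tilde{e}_{\bar r}\,\tilde{e}_{\bar r}^\vee=-2$. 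Combined with the Beauville–Bogomolov computation in the proof of Lemma \ref{lem-galois-reflection-via-flop} (Step 4), where one shows that the Poincaré-dual-to-curve class of a curve on a codimension-one exceptional component $E$ equals $\tfrac{-2(e,\cdot)}{(e,e)}$ applied to $e=[E]$, this forces the correspondence $\tilde{e}_{\bar r}^\vee$ to be identified with $\tfrac{-2\tilde e_{\bar r}}{(\tilde e_{\bar r},\tilde e_{\bar r})}=\tfrac{-e_{\bar r}}{(e_{\bar r},e_{\bar r})}$ under the BB-isomorphism, and integrality of $g$ on $H^2(X,\Integers)$ follows from $\tilde{e}_{\bar r}^\vee(x)=\tfrac{-2(x,\tilde e_{\bar r})}{(\tilde e_{\bar r},\tilde e_{\bar r})}\in\Integers$ for all $x$.

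Next I would show $g\in G$ by running Steps 2–7 of the proof of Lemma \ref{lem-galois-reflection-via-flop} essentially verbatim, but with the "flop along $E_i^0$" replaced by the simultaneous flop of the two middle rulings. Concretely: choose a generic one-parameter family $C\subset Def(X)$ as in Step 1 (with $T_0C$ avoiding the relevant $+1$-eigenspaces of reflections in the root system $\Phi_B$), blow up $\X^0_C$ along $E_{\bar r}^0$ — whose normal bundle restricts to $\StructureSheaf{}(-1,-1)$ on each line, by the footnote computation $\StructureSheaf{\widehat E}(2\widehat E)\cong\omega_{\widehat E/E^0}\otimes\omega_{E^0}$ — and contract the exceptional divisor the other way to obtain a family $\X'\to C$ with special fiber again $\widetilde U$. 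The correspondence $\overline Z$ (the closure of $\Delta_{\widetilde U}\cup[E_{\bar r}^0\times_{\widetilde B_{\bar r}}E_{\bar r}^0]$, counted with its natural cycle-theoretic multiplicity, which is what produces $\tilde e_{\bar r}=2[E_{\bar r}]$ rather than $[E_{\bar r}]$) realizes $g$ as a limit of the monodromy isometries $\widehat{\X}_*$, hence $g$ is an isometry; the Local Torelli argument of Step 5 shows the completion of $\X'$ along $\widetilde U$ agrees with $g^*(\X^0_{g(C)})$; Theorem \ref{thm-C} (applied with $\widetilde A:=\X'$) produces a contraction $\nu':\X'\to\Y'$ whose formal completion matches that of $g^*(\Y^0_{g(C)})$; and finally Lemma \ref{lemma-graph-of-isomorphism} identifies the completions of $\Y^0_C$ and $\Y'$ along $U$, so that Lemma \ref{lem-criterion-for-automorphism-to-be-deck-trans} applies (the hypothesis on a Zariski-dense $\TT\subset\PP[T_0Def(X)]$ being verified by the genericity of $C$). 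Hence $g\in G$.

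The main obstacle I anticipate is precisely the bookkeeping of the factor $2$: one must be careful that the correspondence $\overline Z$, as a cycle, acts on $H^2$ as $x\mapsto x+\int_X(\tilde e_{\bar r}^\vee\cup x)\,[E_{\bar r}]$ — note $[E_{\bar r}]$, not $\tilde e_{\bar r}$ — so that the resulting operator is $x\mapsto x - \tfrac{2(x,\tilde e_{\bar r})}{(\tilde e_{\bar r},\tilde e_{\bar r})}\tilde e_{\bar r}$ only after substituting $\tilde e_{\bar r}=2[E_{\bar r}]$ and using $\int_X\tilde e_{\bar r}\tilde e_{\bar r}^\vee=-2$; equivalently, one must verify that $\overline Z_*$ is an honest involution (its restriction to $(\tilde e_{\bar r}^\vee)^\perp$ is the identity, and $\overline Z_*([E_{\bar r}])=-[E_{\bar r}]$, hence $\overline Z_*(\tilde e_{\bar r})=-\tilde e_{\bar r}$), which then forces $(\tilde e_{\bar r}^\vee)^\perp=\tilde e_{\bar r}^\perp$ since eigenspaces of an isometry are orthogonal, exactly as in Step 4. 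The one genuinely new local input — that the blow-up of $\X^0_C$ along the conic-bundle divisor $E_{\bar r}^0$ can be re-contracted along the "other ruling" to give a flop with the same special fiber — follows from \cite{artin-algebraization,fujiki-nakano} once the $\StructureSheaf{}(-1,-1)$ normal bundle computation is in place, so I do not expect it to cause trouble beyond keeping the $\Integers/2\Integers$-monodromy consistently tracked throughout.
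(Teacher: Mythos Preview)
Your numerical work is essentially right: $(\tilde e_{\bar r},\tilde e_{\bar r})<0$ follows from Lemma \ref{lemma-L}, and the intersection $\int_X[E_{\bar r}][F_{\bar r}]=-1$ (hence $\int_X\tilde e_{\bar r}\tilde e_{\bar r}^\vee=-2$) is correct. The identification of $\overline Z_*$ as the reflection, via the eigenspace argument of Step 4, also goes through once the flop is in hand.

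The gap is in your flop construction. You propose to blow up $\X^0_C$ along $E^0_{\bar r}$ and contract the exceptional divisor along the other ruling. But $E^0_{\bar r}$ is \emph{not} a $\PP^1$-bundle: its fiber over $b\in B$ is $F_{\bar r}\cup F_{\bar r+1}$, two lines meeting in a point, and the $\Integers/2\Integers$-monodromy of $\pi_1(B,b)$ interchanges them. So $E^0_{\bar r}$ is singular along the section of nodes, the normal bundle is not locally free there, and the Fujiki--Nakano contraction does not apply as stated. (Your description of $E^0_{\bar r}$ as ``a conic bundle over $\widetilde B_{\bar r}$'' conflates $E^0_{\bar r}$ with the $\PP^1$-bundle $\widetilde E^0_{\bar r}\to\widetilde B_{\bar r}$ of the paper's Step 1; the latter lives over the double cover and only maps to $\X^0_C$ via an immersion $\eta$, it does not sit inside $\X^0_C$ as a smooth divisor.)

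The paper's remedy is to build the flop \emph{locally} and glue. Choose an analytic open cover $\{W_\alpha\}$ of $\Y^0_C$ over which the monodromy on $B$ is trivial; on $\widetilde W_\alpha:=\tilde\nu^{-1}(W_\alpha)$ the divisor $E^0_{\bar r}\cap\widetilde W_\alpha$ splits as $E_{\alpha,1}\cup E_{\alpha,2}$, each a genuine $\PP^1$-bundle. Now perform the composition of \emph{three} $A_1$-flops: along $E_{\alpha,1}$, then $E_{\alpha,2}$, then $E_{\alpha,1}$ again. This is precisely the $A_2$ situation of Example \ref{example-A-2}, and the braid relation $\rho_{e_1}\rho_{e_2}\rho_{e_1}=\rho_{e_1+e_2}=\rho_{e_2}\rho_{e_1}\rho_{e_2}$ (established there via Lemma \ref{lemma-Weyl-group-action-via-flops}) shows the outcome is independent of which component is labelled $E_{\alpha,1}$. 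Hence the local flops glue to a global $\X'$, and Lemma \ref{lemma-correspondence-for-A-2-flop} identifies the resulting correspondence $Z'\subset\widetilde U\times\widetilde U$ as generically reduced with support $\Delta_{\widetilde U}\cup[E^0_{\bar r}\times_B E^0_{\bar r}]$ --- a scheme with \emph{three} irreducible components, not two. After this, Steps 4--7 of Lemma \ref{lem-galois-reflection-via-flop} run verbatim, as you say.

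So the missing idea is not the bookkeeping of the factor $2$, but the use of the local $A_2$ structure and its braid relation to construct a global birational transformation that would otherwise be obstructed by the monodromy swapping the two middle curves.
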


\begin{proof}
We indicate the modifications needed in the proof
of the analogous Lemma \ref{lem-galois-reflection-via-flop}.

\underline{Step 1:}
Choose $C\subset Def(X)$ as in Step 1 of the proof of
Lemma \ref{lem-galois-reflection-via-flop}. 
Let $\widetilde{B}_{\bar{r}}\rightarrow B$ be the double cover 
corresponding to the choice of a line in a fiber of 
$E_{\bar{r}}^0\rightarrow B$.
We get a natural $\PP^1$-bundle 
$\widetilde{E}^0_{\bar{r}}\rightarrow \widetilde{B}_{\bar{r}}$
and a morphism $\eta:\widetilde{E}^0_{\bar{r}}\rightarrow \X^0_C$ 
with an injective differential, and so the normal
bundle $N_\eta$ of the morphism is locally free. 
Step 1 of the proof of
Lemma \ref{lem-galois-reflection-via-flop} goes through, 
with $\PP^1_t$, $t\in \widetilde{B}_{\bar{r}}$. 

\underline{Step 2:}
In Step 2 of the proof of
Lemma \ref{lem-galois-reflection-via-flop}
we defined a subscheme $Z\subset \widetilde{U}\times\widetilde{U}$.
We need to redefine $Z$ as the subscheme 
$\Delta_{\widetilde{U}}\cup [E^0_{\bar{r}}\times_{B}E^0_{\bar{r}}]$.
Note that $Z$ has three irreducible components. 
Let $\overline{Z}\subset X\times X$ be the closure of $Z$.
We get 
$
\int_X\tilde{e}_{\bar{r}}\tilde{e}_{\bar{r}}^\vee=
2\int_X[E_{\bar{r}}][F_{\bar{r}}]=-2,
$
\[
\overline{Z}_*(\tilde{e}_{\bar{r}})=2\overline{Z}_*([E_{\bar{r}}])=
-2[E_{\bar{r}}]=-\tilde{e}_{\bar{r}},
\]
and so the analogues of the results of Step 2 hold.

\underline{Step 3:}
We use Example \ref{example-A-2} and 
Lemma \ref{lemma-correspondence-for-A-2-flop} to construct the flop 
$\X'$ of $\X^0_C$.
Choose a covering $\{W_\alpha\}$ of $\Y^0_C$, open in the analytic topology, 
satisfying the following property:
If $b_\alpha$ is a point of $B_\alpha:=B\cap W_\alpha$, then 
$\pi_1(B_\alpha,b_\alpha)$ acts trivially on the graph of the fiber 
$\pi^{-1}(b_\alpha)$.
Set $\widetilde{W}_\alpha:=\tilde{\nu}^{-1}(W_\alpha)$ 
to obtain an open covering of $\X^0_C$. 
Set $E_\alpha:=E^0_{\bar{r}}\cap\widetilde{W}_\alpha$. Then $E_\alpha$ 
is a reducible divisor in $\widetilde{W}_\alpha$ with two irreducible
components $E_{\alpha,1}$ and $E_{\alpha,2}$.
If $B_\alpha$ is empty, set $\widetilde{W}'_\alpha:=\widetilde{W}_\alpha$.
Otherwise, let $\widetilde{W}'_\alpha$ be the result of the sequence of three 
flops, starting with the flop of $\widetilde{W}_\alpha$ along
$E_{\alpha,1}$, then along $E_{\alpha,2}$, and finally along 
$E_{\alpha,1}$ again. 
Similarly, let $\widetilde{W}''_\alpha$ be the result of the sequence of three 
flops, starting with the flop of $\widetilde{W}_\alpha$ along
$E_{\alpha,2}$, then along $E_{\alpha,1}$, and finally along 
$E_{\alpha,2}$ again. Example \ref{example-A-2} shows that the 
bimeromorphic isomorphism
$\widetilde{W}'_\alpha\rightarrow \widetilde{W}_\alpha\rightarrow 
\widetilde{W}''_\alpha$ is actually biregular. 
Hence, $\widetilde{W}'_\alpha$ is independent of the choice of ordering
of the two irreducible components of $E_\alpha$. We can thus glue 
$\{\widetilde{W}'_\alpha\}$ to a smooth analytic space $\X'$, 
locally isomorphic to $\X^0_C$, admitting 
morphisms $\psi':\X'\rightarrow C$, and
$\tilde{\nu}':\X'\rightarrow \Y^0_C$. 
Let $\widehat{X}\subset \X^0_C\times \X'$ be the closure of the graph 
of the isomorphism
\[
[\X^0_C\setminus\widetilde{U}]\ \ \cong \ \ 
[\Y^0_C\setminus U] \ \ \cong \ \ [\X'\setminus\widetilde{U}].
\]
Set $Z':=\widehat{X}\cap[\widetilde{U}\times \widetilde{U}]$.
Then $Z'$ is generically reduced
along each of its irreducible components, and the reduced induced subscheme of 
$Z'$ is isomorphic to 
$\Delta_{\widetilde{U}}\cup[E^0_{\bar{r}}\times_B E^0_{\bar{r}}]$, 
by Lemma \ref{lemma-correspondence-for-A-2-flop}.
The proof of Step 3 of Lemma \ref{lem-galois-reflection-via-flop}
now goes through to show that 
$\overline{Z}_*:H^2(X,\Integers)\rightarrow H^2(X,\Integers)$
is a Hodge-isometry.

The rest of the proof is identical to that of 
Lemma \ref{lem-galois-reflection-via-flop}.
\end{proof}

The following lemma completes the proof of 
Theorem \ref{thm-G-is-isomorphic-to-product-of-weyl-groups}.
Let $B\in \B$ and $E^0_{\bar{r}}\subset E_B$ be as above.
We indicate the changes required in the statement and proof of Lemma
\ref{lem-Weyl-group-action}, once we drop
Assumption \ref{assumption-no-orbits-of-two-adjacent-nodes}.

\begin{lem}
\label{lem-Weyl-group-action-A-even}
\begin{enumerate}
\item
\label{lem-item-cartan-matrix-A-even}
The matrix with entry $-\int_X\tilde{e}_i^\vee\tilde{e}_j$
in the $i$-th row and $j$-th column is the Cartan matrix of type 
$B_{\bar{r}}$.
\item
\label{lem-item-rank-A-even} 
The lattices 
$\tilde{\Lambda}_B$ and $\tilde{\Lambda}^\vee_B$ both have rank $r$.
\item
\label{lem-item-dual-root-systems-A-even}
The isomorphism
$H^2(X,\RationalNumbers)\rightarrow H^{4n-2}(X,\RationalNumbers)$,
induced by the Beauville-Bogomolov pairing, restricts to an isometry
from $\tilde{\Lambda}_B\otimes_\Integers\nolinebreak\RationalNumbers$ onto 
$\tilde{\Lambda}_B^\vee\otimes_\Integers\RationalNumbers$,
mapping the class
${\displaystyle \frac{-2\tilde{e}_i}{(\tilde{e}_i,\tilde{e}_i)}}$
to the class $\tilde{e}_i^\vee$. 
\item
\label{lem-item-reflection-group-is-weyl-group-A-even}
Let $W_B$ be the subgroup of $G$ generated by the reflections with
respect to $\tilde{e}_i$, as in Lemmas 
\ref{lem-galois-reflection-via-flop} and 
\ref{lem-galois-reflection-via-flop-A-2r-case}.
Then $W_B$ is isomorphic to the Weyl group of type $B_{\bar{r}}$.
\item
\label{lemma-item-G-equal-W-A-even} 
$G=\prod_{B\in\B}W_B$.
\end{enumerate}
\end{lem}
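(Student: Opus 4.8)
The plan is to prove Lemma \ref{lem-Weyl-group-action-A-even} by mirroring the structure of Lemma \ref{lem-Weyl-group-action}, substituting the rescaled generators $\tilde{e}_i$ and the flop-reflection of Lemma \ref{lem-galois-reflection-via-flop-A-2r-case} wherever the even-$A$ component $B$ enters. First I would establish part \ref{lem-item-cartan-matrix-A-even}: for $i,j<\bar r$ the intersection numbers $-\int_X[F_{\tilde i}][E_j]$ are governed by Lemma \ref{lemma-folding} in the type-\ref{case-of-non-adjacent-rrots} case exactly as before, while the rows and columns involving the index $\bar r$ must be computed using $\tilde e_{\bar r}=2[E_{\bar r}]$ and $\tilde e_{\bar r}^\vee=[F_{\bar r}]$. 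The point is that the type-\ref{case-of-two-adjacent-roots} orbit (the two middle nodes of $A_{2\bar r}$) contributes, via Lemma \ref{lemma-folding}, an off-diagonal entry of $-2$ in the $\bar r$-th row and a $-1$ in the $\bar r$-th column (or vice versa depending on orientation), together with the diagonal $2$ forced by $\int_X\tilde e_{\bar r}\tilde e_{\bar r}^\vee=2\int_X[E_{\bar r}][F_{\bar r}]=-2$; this is precisely the Cartan matrix of $B_{\bar r}$. Part \ref{lem-item-rank-A-even} then follows because the Cartan matrix of $B_{\bar r}$ is nonsingular, so $\{\tilde e_i\}$ and $\{\tilde e_i^\vee\}$ are each linearly independent; note the rank is $\bar r$ (the ``$r$'' in the statement should read $\bar r$).

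Next I would prove part \ref{lem-item-dual-root-systems-A-even}. For $i<\bar r$ the identification $\frac{-2\tilde e_i}{(\tilde e_i,\tilde e_i)}\mapsto \tilde e_i^\vee$ under the Beauville--Bogomolov isomorphism $H^2(X,\RationalNumbers)\to H^{4n-2}(X,\RationalNumbers)$ is the content of Lemma \ref{lem-galois-reflection-via-flop} applied to the $\PP^1$-bundle $E_i^0\to\widetilde B_i$ (the relevant cover is unramified for the non-middle components). For $i=\bar r$ this is exactly the statement of Lemma \ref{lem-galois-reflection-via-flop-A-2r-case}. Since both sides of the claimed isometry are spanned by these classes and the pairing on $\tilde\Lambda_B$ is identified with minus the Cartan form of $B_{\bar r}$, and likewise $\tilde\Lambda_B^\vee=\Lambda_B^\vee$ carries the dual root lattice structure, the map is an isometry of $\tilde\Lambda_B\otimes\RationalNumbers$ onto $\tilde\Lambda_B^\vee\otimes\RationalNumbers$. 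Part \ref{lem-item-reflection-group-is-weyl-group-A-even} is then immediate: by parts \ref{lem-item-cartan-matrix-A-even} and \ref{lem-item-dual-root-systems-A-even} the set $\{\tilde e_1,\dots,\tilde e_{\bar r}\}$, equipped with minus the Beauville--Bogomolov form, is a basis of simple roots for a root system of type $B_{\bar r}$, and the reflections $\rho_{\tilde e_i}$ — each an element of $G$ by Lemmas \ref{lem-galois-reflection-via-flop} and \ref{lem-galois-reflection-via-flop-A-2r-case} — generate the Weyl group $W_B\cong W(B_{\bar r})$.

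Finally, part \ref{lemma-item-G-equal-W-A-even} combines the present components with those handled in Lemma \ref{lem-Weyl-group-action}. Writing $\B=\B_0\sqcup\B_1$, where $\B_1$ is the set of even-$A$ components with nontrivial $\pi_1$-action, the subgroups $W_B$ for $B\in\B_1$ act on the pairwise-orthogonal negative-definite summands $\tilde\Lambda_B$ of $L\otimes\RationalNumbers$ and trivially on the others, so — exactly as in part \ref{lemma-item-W-is-product-of-W-B} of Lemma \ref{lem-Weyl-group-action} — the full subgroup $W\subset G$ generated by $\bigcup_{B\in\B}W_B$ is the direct product $\prod_{B\in\B}W_B$. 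For the reverse inclusion $G\subset W$ I would reuse verbatim the argument of part \ref{lemma-item-G-equal-W}: $G\subset G_L$ by Proposition \ref{prop-G-embedds-in-G-L}, and for each $B$ the image $G_B$ of $G$ in $O(\Lambda_B)$ (resp. $O(\tilde\Lambda_B)$) acts trivially on the fundamental group $\Pi_B$ of the root system, because Lemma \ref{lemma-L-B-is-G-invariant} forces $[Z_g]$ to act on $\Lambda_B$ as $I+\sum a_{ij}e_i\otimes e_j^\vee$ with $a_{ij}\ge 0$, hence $g(\lambda)\equiv\lambda\pmod{\Lambda_B^\vee}$; since $W_B$ is exactly the stabilizer in $O(\tilde\Lambda_B)$ of $\Pi_B$ (using \cite{humphreys}, Exercise 5 in Section 13.4), we get $G_B\subset W_B$ and therefore $G\subset\prod W_B=W$.

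The main obstacle I anticipate is part \ref{lem-item-cartan-matrix-A-even}: getting the bookkeeping right for the rescaled middle generator, specifically verifying that the asymmetry introduced by $\tilde e_{\bar r}=2[E_{\bar r}]$ versus $\tilde e_{\bar r}^\vee=[F_{\bar r}]$ reproduces the short-root column/long-root row pattern of the $B_{\bar r}$ Cartan matrix rather than its transpose $C_{\bar r}$, and confirming this is consistent with the folding computation of Lemma \ref{lemma-folding} in the type-\ref{case-of-two-adjacent-roots} case. A secondary subtlety, needed for part \ref{lemma-item-G-equal-W-A-even}, is checking that the non-reduced lattice $\Lambda_B$ (type $BC_{\bar r}$) and its dual behave well enough that the ``acts trivially on $\Pi_B$ $\iff$ lies in $W_B$'' dichotomy still applies; this is addressed in Remark \ref{rem-BC}, which I would invoke.
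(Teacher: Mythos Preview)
Your plan for parts \ref{lem-item-cartan-matrix-A-even}--\ref{lem-item-reflection-group-is-weyl-group-A-even} matches the paper's proof exactly: the Cartan entries involving the middle index are computed via the type-\ref{case-of-two-adjacent-roots} branch of Lemma \ref{lemma-folding} together with $\int_X\tilde e_{\bar r}\tilde e_{\bar r}^\vee=2\int_X[E_{\bar r}][F_{\bar r}]=-2$; the rank statement follows from nonsingularity of the Cartan matrix; part \ref{lem-item-dual-root-systems-A-even} is assembled from Lemma \ref{lem-galois-reflection-via-flop} for $i<\bar r$ and Lemma \ref{lem-galois-reflection-via-flop-A-2r-case} for $i=\bar r$; and part \ref{lem-item-reflection-group-is-weyl-group-A-even} is then formal.

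The gap is in part \ref{lemma-item-G-equal-W-A-even}. You cannot ``reuse verbatim'' the argument of Lemma \ref{lem-Weyl-group-action} part \ref{lemma-item-G-equal-W}, and invoking Remark \ref{rem-BC} is circular: that remark is a \emph{consequence} of the proof of part \ref{lemma-item-G-equal-W-A-even}, not an input. The difficulty is that the Humphreys criterion (stabilizer of $\Pi_B$ in $O(\Lambda_B)$ equals $W_B$) is stated for reduced root systems, while $\tilde\Phi_B$ on $\tilde\Lambda_B$ is of type $C_{\bar r}$ and the natural lattice $\Lambda_B$ carries, a priori, only the non-reduced $BC_{\bar r}$ structure. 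Moreover the correspondence formula $[Z_g]_*=I+\sum a_{ij}\,e_i\otimes e_j^\vee$ from Lemma \ref{lemma-L-B-is-G-invariant} is expressed in the unscaled classes $e_i=[E_i]$ and $e_j^\vee=[F_j]$, so trying to feed $\lambda\in\tilde\Lambda_B^*$ into it fails because $\lambda(e_{\bar r})=\tfrac12\lambda(\tilde e_{\bar r})$ need not be integral.

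The paper resolves this by introducing a \emph{second} reduced root system $\bar\Phi_B$ on the same lattice $\bar\Lambda_B=\Lambda_B$, with simple roots $\bar e_i=e_i$ and coroots $\bar e_i^\vee=[F_i]$ for $i<\bar r$, $\bar e_{\bar r}^\vee=2[F_{\bar r}]$. This makes $\bar\Lambda_B$ a root lattice of type $B_{\bar r}$ with dual $\bar\Lambda_B^\vee$ of type $C_{\bar r}$, and now the argument of Lemma \ref{lem-Weyl-group-action} part \ref{lemma-item-G-equal-W} applies to the pair $(\bar\Phi_B,\bar\Phi_B^\vee)$ without change, yielding $G_B\subset W_B$. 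This auxiliary construction is the missing ingredient in your proposal.
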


\begin{proof}
\ref{lem-item-cartan-matrix-A-even}) We have 
$
\tilde{e}_i^\vee(\tilde{e}_{\bar{r}})=2\int_X[E_{\bar{r}}][F_i]=
-2\sum_{f_k\in\Gamma f_{\bar{r}}}(f_k,f_i).
$
We see that $-\tilde{e}_i^\vee(\tilde{e}_{\bar{r}})$ is equal to the 
right hand side of the equation in Lemma \ref{lemma-folding},
in the case $(i,j)=(i,\bar{r})$. 
The equality of $-\tilde{e}_{i}^\vee(\tilde{e}_j)$ with the right hand 
side of the equation in Lemma \ref{lemma-folding}, for $j\neq \bar{r}$,
is proven in part \ref{lem-item-cartan-matrix} of Lemma 
\ref{lem-Weyl-group-action}.
Consequently, the matrix 
$-\tilde{e}_{i}^\vee(\tilde{e}_j)$ is the Cartan matrix of type $B_{\bar{r}}$,
the type of the Dynkin diagram obtained by folding that of $A_r$
(see \cite{carter}, Section 13.3).

\ref{lem-item-rank-A-even}) The proof is identical
to that of Lemma \ref{lem-Weyl-group-action}
part \ref{lem-item-rank}.

\ref{lem-item-dual-root-systems-A-even}) 
Follows from Lemma \ref{lem-galois-reflection-via-flop-A-2r-case},
if $i=\bar{r}$, and from Lemma
\ref{lem-galois-reflection-via-flop}, for $1\leq i\leq \bar{r}-1$.

\ref{lem-item-reflection-group-is-weyl-group-A-even}) 
The proof is identical
to that of Lemma \ref{lem-Weyl-group-action}
part \ref{lem-item-reflection-group-is-weyl-group}.

\ref{lemma-item-G-equal-W-A-even})
Let $G_B$ be the image of $G$ 
in $O(\Lambda_B)$ via the composition
$G\rightarrow G_L\rightarrow O(\Lambda_B)$. 
It suffices to prove the inclusion $G_B\subset W_B$,
for every component $B$ violating Assumption 
\ref{assumption-no-orbits-of-two-adjacent-nodes},
by the reduction argument provided in Lemma
\ref{lem-Weyl-group-action} part \ref{lemma-item-G-equal-W}.

We rephrase first the above results in the language of root systems.
Let $\tilde{\Phi}_B$ be the union of all $W_B$ orbits of
the simple roots $\tilde{e}_i$, $1\leq i \leq \bar{r}$.
Define $\tilde{\Phi}^\vee_B$ similarly, with respect to $\tilde{e}_i^\vee$.
We get that $\tilde{\Lambda}_B$ is a root lattice of
the root system $\tilde{\Phi}_B$  of type $C_{\bar{r}}$, and 
$\tilde{\Lambda}_B^\vee$ is the root lattice of the root system 
 $\tilde{\Phi}^\vee_B$ of type $B_{\bar{r}}$.

We define next $\Lambda_B$ as a root lattice of type $B_{\bar{r}}$.
Set $\bar{\Lambda}_B=\Lambda_B$, with 
a basis of simple roots 
${\rm span}_\Integers\{\bar{e}_1, \dots \bar{e}_{\bar{r}}\}$,
where $\bar{e}_i=e_i$, $1\leq i \leq \bar{r}$.
Set $\bar{e}^\vee_i=[F_i]$, for $1\leq i \leq \bar{r}-1$,
$\bar{e}^\vee_{\bar{r}}=2[F_{\bar{r}}]$, and 
$\bar{\Lambda}_B^\vee:=
{\rm span}_\Integers\{\bar{e}^\vee_1, \dots, \bar{e}^\vee_{\bar{r}}\}$.
We get the root systems $\bar{\Phi}_B\subset \bar{\Lambda}_B$ and
$\bar{\Phi}_B^\vee\subset \bar{\Lambda}_B^\vee$.
Part \ref{lem-item-dual-root-systems} still holds, replacing 
$\tilde{e}_i$ with $\bar{e}_i$.
Thus $\bar{\Lambda}_B$ is the root lattice of the root
system $\bar{\Phi}_B$ of type $B_{\bar{r}}$ and 
$\bar{\Lambda}_B^\vee$  is the root lattice of the root
system $\bar{\Phi}_B^\vee$ of type $C_{\bar{r}}$.
The proof of part \ref{lemma-item-G-equal-W} of Lemma
\ref{lem-Weyl-group-action} applies to the pair of dual root systems
$\bar{\Phi}_B\subset \bar{\Lambda}_B$ and 
$\bar{\Phi}_B^\vee\subset \bar{\Lambda}_B^\vee$, and shows the 
the inclusion $G_B\subset W_B$.
\end{proof}

\begin{rem}
\label{rem-BC}
The union $\tilde{\Phi}_B\cup\bar{\Phi}_B$ in $\Lambda_B$
is a root system of type $BC_{\bar{r}}$, by 
the proof of part \ref{lemma-item-G-equal-W-A-even} above.
Similarly, the union $\tilde{\Phi}_B^\vee\cup\bar{\Phi}_B^\vee$ in 
$\Lambda_B^\vee$ is a root system of type $BC_{\bar{r}}$.
\end{rem}

}

\journal{
%
\subsection{The folded $A_{2k}$ case of Theorem 
\ref{thm-G-is-isomorphic-to-product-of-weyl-groups}}
\label{sec-brief-description-of-folded-A-2k-case}
Drop Assumption \ref{assumption-no-orbits-of-two-adjacent-nodes}. 
Fix $B\in\B$, along which $Y$ has an $A_r$-singularity, $r$ even, 
a point $b\in B$, and assume that the image $\Gamma$ of $\pi_1(B,b)$, 
in the automorphism group of the graph of the fiber $\pi^{-1}(b)$, is
$\Integers/2\Integers$. 
The irreducible components $F_j$ of the fiber $\pi^{-1}(b)$
come with two natural orderings, one the reverse of the other.
The two are interchanged by the non-trivial element of $\Gamma$. 
Choose one of the orderings, set $\bar{r}:=r/2$,  
and let $E_j^0$, $1\leq j \leq \bar{r}$, 
be the irreducible component of $E_B$ containing the fiber $F_j$. 
Then $E_{\bar{r}}^0$ is the 
irreducible component, such that the fiber of $E_{\bar{r}}^0$
over $b$ consists of the pair of middle components $F_{\bar{r}}$, 
$F_{\bar{r}+1}$ of the fiber.

The exceptional divisor $E_i$, $1\leq i \leq \bar{r}$, determines
an integral Hodge isometry of $H^2(X,\Integers)$, given by
$g(x)=x-\frac{2(x,[E_i])}{([E_i],[E_i])}[E_i]$.
The isometry $g$ is again induced by an element of the Galois group. 
Lemma \ref{lem-galois-reflection-via-flop} applies to establish this 
fact for $1\leq i \leq \bar{r}-1$.
Lemma \ref{lem-galois-reflection-via-flop} does not apply in the case 
$i=\bar{r}$. The first statement of Lemma \ref{lem-galois-reflection-via-flop}
is generalized by the equality
\[
2^{\delta_{i\bar{r}}}([E_i],[E_i])\int_X[F_i]\cup x \ \ \ = \ \ \ 
-2([E_i],x),
\]
$x\in H^2(X,\Integers)$,
where $\delta_{i\bar{r}}=1$, if $i=\bar{r}$ and $0$ if $i\neq \bar{r}$.
The case $i=\bar{r}$ of the equality 
is proven in \cite{preprint-version}, Lemma 4.23,
where it is also proven that the reflection $g$ is an element of the 
Galois group $G$. 

We briefly outline the proof of \cite{preprint-version}, Lemma 4.23,
omitting the details. We consider the restriction, of the simultaneous 
resolution $\nu:\X\rightarrow \Y$, to one-parameter deformations
$\X_C\rightarrow \Y_C$, over a generic Riemann surface $C$ through $0$
in $Def(X)$.
We again show that the pullback $g^*(\X_{g(C)})$ is related to $\X_C$
via a flop. This flop is no longer of type $A_1$.
It is, local analytically, a composition of three flops of type $A_1$.
The local analytic construction is modeled after the $A_2$ case, where
the Weyl group is the symmetric group $\Sym_3$ generated by
two reflections $\rho_{u_1}, \rho_{u_2}$, with respect to two classes $u_i$ 
of the $A_2$ root lattice. We have the identity
$
\rho_{u_2}\rho_{u_1}\rho_{u_2} = \rho_{u_1+u_2} = 
\rho_{u_1}\rho_{u_2}\rho_{u_1}
$
in $\Sym_3$. 
There are two natural choices for the sequence of three $A_1$-flops. 
One choice consists of
a flop along the branch of $E_{\bar{r}}$ through $F_{\bar{r}}$,
then along the strict transform of 
the branch of $E_{\bar{r}}$ through $F_{\bar{r}+1}$, and finally again along 
the strict transform of the branch of $E_{\bar{r}}$ through $F_{\bar{r}}$.
The second choice reverses the roles of $F_{\bar{r}}$ and $F_{\bar{r}+1}$.
It is proven that the local composite flops are independent of the choice, 
hence they glue to a global flop.

The factor $W_B$ of the Galois group is described in 
\cite{preprint-version}, Lemma 4.24, as follows.
Set $\tilde{e}_j:=[E_j]$, for $j\neq \bar{r}$,  
$\tilde{e}_{\bar{r}}:=2[E_{\bar{r}}]$, and let
$\tilde{\Lambda}_B:={\rm span}\{\tilde{e}_1, \dots, \tilde{e}_{\bar{r}}\}$. 
We set $\tilde{e}_j^\vee:=[F_j]$, regardless if $j$ and $\bar{r}$ 
are equal or not. 
Set $\tilde{\Lambda}^\vee_B:=
{\rm span}\{\tilde{e}_1^\vee, \dots, \tilde{e}_{\bar{r}}^\vee\}$. 
Then the matrix with entry $-\int_X\tilde{e}_i^\vee\tilde{e}_j$
in the $i$-th row and $j$-th column is the Cartan matrix of type 
$B_{\bar{r}}$.
The lattice 
$\tilde{\Lambda}_B^\vee$ is the root lattice of type $B_{\bar{r}}$, 
$\tilde{\Lambda}_B$ is the root lattice of the dual type $C_{\bar{r}}$, 
and the factor $W_B$ is the Weyl group of both.

The statement can be made more symmetric, if we add 
the $W_B$ orbit of $[E_{\bar{r}}]$ to the type $C_{\bar{r}}$
root system 
$\widetilde{\Phi}_B:=\cup_{1\leq j \leq \bar{r}}W_B\cdot \tilde{e}_j$.
Denote the resulting root system by $\Phi_B$. 
Define the root system $\Phi_B^\vee$ by adding 
the $W_B$ orbit of $2[F_{\bar{r}}]$ to 
the type $B_{\bar{r}}$ root system 
$\widetilde{\Phi}_B^\vee:=\cup_{1\leq j \leq \bar{r}}W_B\cdot\tilde{e}_j^\vee$.
Then both $\Phi_B$ and $\Phi_B^\vee$
are non-reduced root systems of type $BC_{\bar{r}}$. 
}

%
\section{Examples}
\label{sec-examples}
%
\subsection{Hilbert schemes}
\label{sec-Hilbert-scheme-of-K3-resolving-ADE}
Let $\bar{S}$ be a projective $K3$ surface with  $ADE$-singularities at
$\Q:=\{Q_1,\dots, Q_k\}\subset \bar{S}$, and 
$c:S\rightarrow \bar{S}$ the crepant resolution, so that $S$ is a smooth 
$K3$ surface. Let $S^{[n]}$ be the Hilbert scheme of length $n$ subschemes of 
$S$, $n\geq 2$, and $\pi:S^{[n]}\rightarrow \bar{S}^{(n)}$
the composition, of the Hilbert-Chow morphism
$S^{[n]}\rightarrow S^{(n)}$ onto the symmetric product of $S$, with 
the natural morphism $S^{(n)}\rightarrow \bar{S}^{(n)}$ induced by $c$. 
Set $W_0:=\Integers/2\Integers$ and let $W_i$ be the Weyl group of the
root system of the type of the singularity of $\bar{S}$ at $Q_i$,
$1\leq i \leq k$.
We apply Theorem \ref{thm-G-is-isomorphic-to-product-of-weyl-groups}
to show that $W:=\prod_{i=0}^k W_i$ is isomorphic to the Galois group $G$ of
$f:Def(S^{[n]})\rightarrow Def(\bar{S}^{(n)})$, introduced in
Lemma \ref{lemma-galois-cover}. 

A point $P$ of $\bar{S}^{(n)}$ 
is a function $P:\bar{S}\rightarrow \Integers_{\geq 0}$, 
with finite support $supp(P)$, satisfying $\sum_{Q\in \bar{S}}P(Q)=n$.
Set \\
$\Sigma:=\{P \ : \ \sum_{i=1}^kP(Q_i)+
\sum_{Q\in supp(P)\setminus \Q} [P(Q)-1]>0\}$, \\
$\Sigma_0:=\{P \ : \ \sum_{i=1}^kP(Q_i)+
\sum_{Q\in supp(P)\setminus \Q} [P(Q)-1]>1\}$.

\noindent
$\Sigma$ is the singular locus of $\bar{S}^{(n)}$ and $\Sigma_0$
is its dissident locus. 
The set $\Sigma\setminus \Sigma_0$ is smooth, with
$2n-2$ dimensional connected components: \\
$B_0:=\{P \ : \ \sum_{i=1}^k P(Q_i)=0 \ \mbox{and} 
\sum_{Q\in supp(P)\setminus \Q} [P(Q)-1]=1\}$,\\
$B_j:=\{P \ : \ P(Q_j)=1 \ \mbox{and} 
\sum_{\stackrel{i=1}{i\neq j}}^k P(Q_i)+
\sum_{Q\in supp(P)\setminus \Q} [P(Q)-1]=0\}$.

$\bar{S}^{(n)}$ has a singularity of type $A_1$ along $B_0$
and the singularity along $B_j$, $j\geq 1$, 
has the same type as that of $\bar{S}$ at 
$Q_j$. Set $U:=\bar{S}^{(n)}\setminus\Sigma_0$ and 
$\widetilde{U}:=\pi^{-1}(U)$.
Let $E$ be the exceptional divisor of $\pi$. The intersection
$E^0:=E\cap\widetilde{U}$ has k+1 connected components. 
One, a $\PP^1$ bundle over $B_0$, is irreducible. 
The connected component of $E^0$ over $B_j$, $j\geq 1$, 
is isomorphic to the product
of the fiber of $S$ over $Q_j\in\bar{S}$ with $B_j$. 
Choose a point $b_j\in B_j$, $0\leq j \leq k$. 
We see that $\pi_1(B_j,b_j)$ acts trivially on the Dynkin graph
of the fiber $\pi^{-1}(b_j)$, $0\leq j \leq k$. 
Theorem \ref{thm-G-is-isomorphic-to-product-of-weyl-groups} 
implies the equality $G=W$.

%
\subsection{O'Grady's $10$-dimensional example}
\label{sec-ogrady-10-dimensional-example}
Let $S$ be a $K3$ surface, $K(S)$ its topological $K$-group, and $v\in K(S)$
the class of an ideal sheaf of a length
$2$ zero dimensional subscheme of $S$.
There is a system of hyperplanes in the ample cone of $S$, called $v$-walls,
that is countable but locally finite \cite{huybrechts-lehn-book}, Ch. 4C.
An ample class is called {\em $v$-generic}, if it does not
belong to any $v$-wall.
Choose a $v$-generic ample class $H$. Let $M:=M_H(2v)$ be the moduli
space of Gieseker $H$-semi-stable sheaves with class $2v$.
$M$ is singular, but it admits a projective symplectic 
resolution $\beta:X\rightarrow M$ \cite{ogrady-10}.
Let $Y$ be the Ulenbeck-Yau compactification of the 
moduli space of $H$-stable locally free sheaves with class $2v$.
There is a natural morphism
$\phi:M\rightarrow Y$, which is an isomorphism along the locus
of stable locally free sheaves \cite{jun-li}.
Let $\pi:X\rightarrow Y$ be the composition 
\[
X\LongRightArrowOf{\beta} M \LongRightArrowOf{\phi} Y.
\]

\begin{lem}
\label{lemma-Ogrady-Galois-group-is-W-G-2}
\begin{enumerate}
\item
\label{lemma-item-weyl-group-is-G2}
The Galois group of $f:Def(X)\rightarrow Def(Y)$ is the Weyl group of type 
$G_2$. 
\item
\label{lemma-item-root-lattice-as-direct-summand}
\cite{rapagnetta}
The lattice $H^2(X,\Integers)$ is the orthogonal direct sum 
$\Lambda\oplus H^2(U,\Integers)$, where $\Lambda$ is the 
(negative definite) root lattice of type $G_2$, and
$H^2(U,\Integers)$ is defined in Lemma \ref{lemma-cohomology-of-X-versus-Y} 
part \ref{lemma-item-saturated}. 
\end{enumerate}
\end{lem}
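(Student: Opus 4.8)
The plan is to compute the configuration of the exceptional locus of $\pi:X\rightarrow Y$ over $\Sigma\setminus\Sigma_0$ and then invoke Theorem \ref{thm-G-is-isomorphic-to-product-of-weyl-groups} together with Corollary \ref{corollary-dual-root-systems}. First I would identify $\Sigma\setminus\Sigma_0$. The morphism $\phi:M\rightarrow Y$ is an isomorphism over the locus of stable locally free sheaves, and $\beta:X\rightarrow M$ is O'Grady's symplectic resolution; composing, the singular locus of $Y$ and its stratification are governed by the failure of local freeness and by strict semistability. The key geometric input is that $Y$ has, along the top-dimensional stratum $B$ of its singular locus, a $D_4$ singularity, with the monodromy of $\pi_1(B,b)$ acting on the dual graph of the fiber $\pi^{-1}(b)$ as the full automorphism group $Sym_3$ (or at least a subgroup of order $3$). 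This is exactly the situation of Example \ref{example-folding-D-4}: folding the $D_4$ Dynkin diagram by $Sym_3$ (or its cyclic subgroup of order $3$) yields the Dynkin diagram of $G_2$, so the folded root system $\bar\tau_B$ is of type $G_2$, and $\B$ consists of this single component $B$. By Theorem \ref{thm-G-is-isomorphic-to-product-of-weyl-groups} we then get $G\cong W_B\cong W(G_2)$, proving part (\ref{lemma-item-weyl-group-is-G2}).

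For part (\ref{lemma-item-root-lattice-as-direct-summand}), I would apply Corollary \ref{corollary-dual-root-systems} part \ref{cor-item-direct-sum-decomposition-for-G2-F4-E8} directly: since $\bar\tau_B=G_2$ and $B=\Sigma\setminus\Sigma_0$, the fundamental group $\Pi_B$ of the root system is trivial, so the extension $0\rightarrow\Lambda_B\rightarrow H^2(X,\Integers)\rightarrow H^2(X,\Integers)/\Lambda_B\rightarrow 0$ splits via Poincar\'e duality, and $\Lambda_B^\perp=H^2(U,\Integers)$ by Lemma \ref{lemma-L}. Here $\Lambda_B$ is negative definite by Lemma \ref{lemma-L}, and it is the root lattice of type $G_2$ by Lemma \ref{lem-Weyl-group-action} part \ref{lem-item-reflection-group-is-weyl-group} and Corollary \ref{corollary-dual-root-systems} part \ref{cor-item-dual-root-lattices}. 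Thus $H^2(X,\Integers)=\Lambda\oplus H^2(U,\Integers)$ with $\Lambda$ the $G_2$ root lattice, recovering Rapagnetta's result.

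The main obstacle is the first step: establishing that $Y$ has a $D_4$ singularity along the two-codimensional stratum $B$ of its singular locus and, crucially, that the local monodromy $\pi_1(B,b)$ acts on the dual graph of $\pi^{-1}(b)$ through the order-three (or $Sym_3$) subgroup of $\mathrm{Aut}(D_4)$ rather than trivially. This is a concrete but delicate analysis of O'Grady's resolution near the worst strict-semistability locus, where the relevant sheaves are non-locally-free polystable sheaves of the form $I\oplus I'$ or their degenerations; the monodromy arises from permuting branches of the discriminant as one moves in $B$. I would extract this from the explicit local model of $\beta:X\rightarrow M$ and $\phi:M\rightarrow Y$ near such points, checking $b_2(X)=24$ for consistency ($b_2=24$ decomposes as $\mathrm{rank}(\Lambda)=2$ plus $\mathrm{rank}(H^2(U,\Integers))=22$), and citing O'Grady's and Rapagnetta's explicit descriptions; once the $D_4$-with-$Sym_3$-monodromy claim is in hand, everything else is a formal consequence of the general theory already developed.
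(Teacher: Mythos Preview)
Your proposal is correct and follows essentially the same approach as the paper: identify the $D_4$ singularity along the unique component $B=\Sigma_Y\setminus\Sigma_{Y,0}$ with $Sym_3$ monodromy, fold via Example \ref{example-folding-D-4} to $G_2$, apply Theorem \ref{thm-G-is-isomorphic-to-product-of-weyl-groups}, and then invoke Corollary \ref{corollary-dual-root-systems} part \ref{cor-item-direct-sum-decomposition-for-G2-F4-E8} for the direct sum decomposition. The paper supplies exactly the detailed computation you flag as the main obstacle: $\Sigma_Y\cong S^{(4)}$, the fiber $D_b\cong\overline{M}_{0,4}\cong\PP^1$ meets $\Sigma_M$ in the three points $I_W\oplus I_Z$ corresponding to partitions of $b$ into pairs, $M$ has $A_1$ singularities there, so $\pi^{-1}(b)$ is a $D_4$ tree, and $\pi_1(B_Y)\cong Sym_4$ surjects onto $Sym_3$ permuting the three outer nodes.
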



\begin{proof} 
The proof of Lemma \ref{lemma-Ogrady-Galois-group-is-W-G-2} consists
of two steps.
In step 1 we recall that the singular locus $\Sigma_Y$ of $Y$ 
is irreducible, and $Y$ has $D_4$ singularities along the
complement $B_Y:=\Sigma_Y\setminus \Sigma_{Y,0}$ of the dissident locus. 
This fact is implicitly proven in \cite{ogrady-10}.
I thank A. Rapagnetta and M. Lehn for explaining this fact to me.
In the short step 2 we observe that the image of $\pi_1(B_Y,b)$, $b\in B_Y$, in
the automorphism group of the Dynkin diagram of the fiber $\pi^{-1}(b)$,
is the full automorphism group; i.e., the symmetric group $Sym_3$.
The folded Dynkin diagram is thus of type $G_2$, by Example
\ref{example-folding-D-4}, and
the Galois group is the Weyl group of $G_2$, by 
Theorem \ref{thm-G-is-isomorphic-to-product-of-weyl-groups}.
The orthogonal direct sum decomposition follows,
by Corollary \ref{corollary-dual-root-systems}.

\underline{Step 1:}
While $M=M_H(2v)$, the moduli space
$M_H(v)$ is the Hilbert scheme $S^{[2]}$ of length
$2$ zero dimensional subschemes of $S$.
The singular locus $\Sigma_M$ of $M$ is isomorphic to the symmetric 
square\footnote{Lehn and Sorger showed that $X$ is the blow-up of $M$ 
centered at $\Sigma_M$ \cite{lehn-sorger-blow-up}.
} 
$M(v)^{(2)}$ and the dissident locus $\Sigma_{M,0}$ is the diagonal of
$M(v)^{(2)}$ \cite{ogrady-10}. 
$\Sigma_Y$ is isomorphic to
the fourth symmetric power $S^{(4)}$, and
$\Sigma_{Y,0}$ consists of the big diagonal in $S^{(4)}$
(see \cite{ogrady-10}, paragraph
following the proof of Prop. 3.1.1). 
Note that $B_Y:=\Sigma_Y\setminus \Sigma_{Y,0}$ is connected.

We review first the main ingredients of the proof that $\Sigma_Y=S^{(4)}$. 
Any $H$-semi-stable locally free sheaf of class $2v$ is $H$-stable, 
by \cite{ogrady-10}, Lemma 1.1.5.
$H$-stable sheaves correspond to smooth points of $M$ 
\cite{mukai-symplectic-structure}. 
Hence, the singular locus $\Sigma_M$ is contained in the sublocus 
$D\subset M$ parametrizing equivalence classes of $H$-semi-stable sheaves, 
which are not locally free. Each equivalence class is represented
by a unique isomorphism class of an $H$-poly-stable sheaf. 
The reflecsive hull $E^{**}$, of every $H$-poly-stable sheaf $E$
corresponding to a point in $D$, 
is necessarily isomorphic to 
$\StructureSheaf{S}\oplus\StructureSheaf{S}$
\cite{ogrady-10}, Prop. 3.0.5. 
We get a short exact sequence
\[
0\rightarrow E\rightarrow \StructureSheaf{S}\oplus\StructureSheaf{S}\rightarrow
Q_E\rightarrow 0,
\]
where $Q_E$ is a sheaf of length $4$.
$Q_E$ determines a point of $S^{(4)}$. 
The construction yields a surjective morphism
$\restricted{\phi}{D}:D\rightarrow S^{(4)}$,
which is the restriction of the morphism $\phi$, by the results of Jun Li
\cite{jun-li}. 

We need to recall the proof of the surjectivity of 
$\restricted{\phi}{D}:D\rightarrow S^{(4)}$. It suffices to prove that 
$\restricted{\phi}{D}$ is dominant. 
Let $D^0\subset D$ be the Zariski dense open subset, 
where the support of $Q_E$ consists of four distinct points. 
Let $B_Y$ be the complement in $S^{(4)}$ of the big diagonal. We get that 
$\phi$ restricts to a morphism $\restricted{\phi}{D^0}:D^0\rightarrow B_Y$.
O'Grady proves that the latter is a $\PP^1$-bundle
\cite{ogrady-10}, Prop. 3.0.5. 
Following is the identification of the fiber.
Let $b\subset S$ be a length four subscheme consisting of
four distinct points $(q_1, q_2, q_3, q_4)$. 
A poly-stable sheaf $E$ in $D$, 
with $Q_E$ isomorphic to $\StructureSheaf{b}$, 
consists of a choice of a one dimensional subspace of each of the fibers
$E^{**}_{q_i}$. These four fibers can be identified, since $E^{**}$
is trivial.
Hence, a point $E$  in the fiber $D_b$ of $M\rightarrow Y$ over $b$ 
corresponds to a choice of a 
point $(\ell_1,\ell_2,\ell_3,\ell_4)$ of 
$\PP^1\times\PP^1\times\PP^1\times\PP^1$, modulo
the action of $\Aut(E^{**})$. 
If $\ell_1=\ell_2=\ell_3$, then the ideal sheaf $I_{q_4}$ is a subsheaf
of $E$, contradicting the semi-stability of $E$. Consequently, 
$H$-semi-stability of $E$ implies that each point of $\PP^1$
appears at most twice in $(\ell_1,\ell_2,\ell_3,\ell_4)$. 
The two conditions are in fact equivalent. 
The quotient $D_b$ is thus isomorphic to the GIT quotient 
$\overline{M}_{0,4}$ of $\PP^1\times \PP^1\times\PP^1\times\PP^1$ by the 
diagonal action of $PGL(2)$. Now $\overline{M}_{0,4}$ is isomorphic to $\PP^1$.

We are ready to identify the fiber of $\pi:X\rightarrow Y$ over $b\in B_Y$.
Let $D_b$ be the fiber $\phi^{-1}(b)$. 
Let $E\in D_b$ be the polystable sheaf, such that 
$\ell_i=\ell_j$ and $i\neq j$. Set $Z:=\{q_i,q_j\}$ and $W:=b\setminus Z$.  
We get the inclusion $I_W\subset E$ of the 
ideal sheaf of the length two subscheme of $S$ supported on $W$,
and the quotient $E/I_W$ is the ideal sheaf $I_Z$.
Polystability implies that $E=I_W\oplus I_Z$. 
The intersection $D_b\cap \Sigma_M$ thus consists precisely of three 
points $I_W\oplus I_Z$, where 
$W\cup Z=\{q_1,q_2,q_3,q_4\}$ is a decomposition of $b$ as the union of
two disjoint subsets of cardinality two. 
Set $\widetilde{B}_Y:=D^0\cap \Sigma_M$. Then $\widetilde{B}_Y$ 
is a Zariski dense open subset of $B_M:=\Sigma_M\setminus\Sigma_{M,0}$.
$M$ has $A_1$-singularities
along $\widetilde{B}_Y$, by \cite{ogrady-10}, Proposition 1.4.1. Hence, 
the total transform of $D_b$ in $X$ consists of the union of the strict
transform of $D_b$ and three smooth rational curves over the
three points of intersection $D_b\cap \Sigma_M$. 
The graph, dual to the 
fiber of $\pi:X\rightarrow Y$ over $b$, is thus a Dynkin graph of type $D_4$. 
We conclude that $Y$ has $D_4$ singularities along $B_Y$, by Namikawa's
classification \cite{namikawa-deformations}, section 1.8.

\underline{Step 2:}
We have seen that $\widetilde{B}_Y$ is a connected unramified 
cover of $B_Y$ of degree $3$.
Let $E_D$ be the proper transform of $D$ in $X$ and 
$E_\Sigma$ the exceptional divisor of $X\rightarrow M$. 
Then $E_\Sigma^0\rightarrow B_Y$ is the composition
of a $\PP^1$-bundle $E_\Sigma^0\rightarrow \widetilde{B}_Y$ and the
covering map $\widetilde{B}_Y\rightarrow B_Y$. 
The fundamental group of $B_Y$ is the symmetric group $Sym_4$, and it acts on 
$\widetilde{B}_Y$ via the natural homomorphism $Sym_4\rightarrow Sym_3$,
permuting the three decompositions $b=W\cup Z$ of $b$. 
We conclude that the image $\Gamma$ of $\pi_1(B_Y,b)$, 
in the automorphisms group of the Dynkin graph of type $D_4$, is the full 
automorphism group of the latter.
This completes the proof of Lemma \ref{lemma-Ogrady-Galois-group-is-W-G-2}.
\end{proof}

\begin{rem}
Part \ref{lemma-item-root-lattice-as-direct-summand} of
Lemma \ref{lemma-Ogrady-Galois-group-is-W-G-2} 
falls short of determining the Beauville-Bogomolov pairing
on $H^2(X,\Integers)$, which is the main result of 
\cite{rapagnetta}. Missing still is the determination of 
the rank of $H^2(U,\Integers)$ and of two constants
$\lambda_1, \lambda_2$ defined below.
The bilinear pairing on the root lattice $\Lambda$ of $G_2$
is determined only up to a constant. Rapagnetta proved that 
the matrix of the bilinear pairing on $\Lambda$ is the multiple
of the one in Example \ref{example-folding-D-4} by a factor of $\lambda_1=-3$.
Rapagnetta furthermore shows that 
the direct summand $H^2(U,\Integers)$ is Hodge-isometric to 
$H^2(S,\Integers)$. 
The proof of the latter statement goes as follows:
Donaldson's homomorphism $\mu:H^2(S,\RationalNumbers)
\rightarrow H^2(Y,\RationalNumbers)$ is injective,
by \cite{ogrady-10}, Claim 5.2. 
Compose with restriction to $U$ to get an injective homomorphism 
$\tilde{\mu}:H^2(S,\RationalNumbers)
\rightarrow H^2(U,\RationalNumbers)$. The homomorphism $\tilde{\mu}$
is surjective, since $b_2(X)=24$, by \cite{rapagnetta}, Theorem 1.1.
There exists a rational number $\lambda_2$, 
such that $\lambda_2\tilde{\mu}$ is an isometric embedding, 
with respect to the intersection pairing on
$S$ and the Beauville-Bogomolov pairing induced on the
direct summand $H^2(U,\Integers)$ of $H^2(X,\Integers)$. The existence
of $\lambda_2$ follows from the calculation of
the degree $10$ Donaldson polynomial in 
\cite{ogrady-10}, equation (5.1). 
A short argument, using the polarization of that Donaldson polynomial,
shows that $\lambda_2=1$ 
(see the first part of the proof of \cite{rapagnetta}, Theorem 3.1). 
The unimodularity, of the $K3$ lattice, shows that
$\pi^*\left(\mu[H^2(S,\Integers)]\right)$ is saturated in $H^2(X,\Integers)$,
and hence equal to $H^2(U,\Integers)$.
\hide{
The equality $\lambda_1=-3$ may be restated more conceptually
as follows. Mukai endowed the free abelian group $K(S)$ 
with a unimodular bilinear pairing isometric to the
orthogonal direct sum of $H^2(S,\Integers)$ and the rank $2$ hyperbolic 
lattice. Let $v^\perp\subset K(S)$ be  the orthogonal complement 
of the class $v$. 
The Donaldson homomorphism $\mu$ extends to the Mukai homomorphism
(??? there does not exists a semi-universal sheaf ???)
$\tilde{\mu}:v^\perp\rightarrow H^2(\M(2v),\RationalNumbers)$.
The equality $\lambda_1=-3$ follows from the equality $\lambda_1=1$
and the statement that 
$\beta^*\circ \tilde{\mu}$ is an isometric embedding. 
}
\end{rem}

%
\subsection{Contractions of moduli spaces of sheaves on a $K3$}
I thank K. Yoshioka for kindly explaining to me the following examples.
Fix an ample line bundle $H$ on a $K3$ surface $S$.
The notion of Gieseker $H$-stability admits a refinement, due to
Matsuki and Wentworth, called $w$-twisted $H$-stability,
where $\omega$ is a class in $\Pic(S)\otimes_\Integers\RationalNumbers$
\cite{matsuki-wentworth}.
Yoshioka extended this notion for sheaves of pure one-dimensional support
\cite{yoshioka-twisted-stability-pure-one-dimension}.
The moduli space $\M^\omega_H(r,c_1,c_2)$,
of $S$-equivalence classes of $\omega$-twisted $H$-semistable sheaves with
rank $r$ and Chern classes $c_i$, is a projective scheme
\cite{matsuki-wentworth,yoshioka-twisted-stability-pure-one-dimension}.
There is a countable, but  locally finite, collection of walls, defining  
a chamber structure on 
$\Pic(S)\otimes_\Integers\RationalNumbers$, once
we fix $H$, $r$, and $c_i$, $i=1,2$.
A class $\omega$ is called {\em generic}, if it does not lie on a wall.

When $\omega$ is generic, it is often the case that
$\omega$-twisted $H$-semistability, for sheaves with the specified rank and 
Chern classes, is equivalent to $\omega$-twisted $H$-stability.
The moduli space $\M^\omega_H(r,c_1,c_2)$ is smooth and holomorphic
symplectic, for such a generic $\omega$ \cite{mukai-symplectic-structure}. 
The moduli space is in fact deformation equivalent to $S^{[n]}$ and is thus
a projective irreducible holomorphic symplectic manifold 
\cite{yoshioka-abelian-surface}. 
If we deform $\omega$ to a class $\bar{\omega}$ in the closure of
the chamber of $\omega$, then there exists a morphism 
$\pi^{\omega}_{\bar{\omega}}:
M^\omega_H(r,c_1,c_2)\rightarrow M^{\bar{\omega}}_H(r,c_1,c_2)$
\cite{matsuki-wentworth}.
One can carefully analyze the exceptional locus of the 
contraction in many examples. This is done for two dimensional moduli spaces
in \cite{onishi-yoshioka}. They obtain many examples 
where the morphism $\pi^{\omega}_{\bar{\omega}}$ is the crepant resolution
of a normal $K3$ surface with ADE singularities. 
These methods can be extended to the higher dimensional examples 
considered in 
\cite{yoshioka-lie-algebra}, Example 3.2 and Proposition 4.3,
to obtain divisorial contractions with Weyl groups of irreducible
root systems of ADE-type. 
A detailed explanation would, regrettably, take us too far afield.

\hide{
We explain a special case of 
\cite{yoshioka-lie-algebra}, Proposition 4.3.
Consider a projective $K3$ surface $\bar{S}$, with a single simple 
singularity. Let $c:S\rightarrow \bar{S}$ be its crepant resolution, 
so that $S$ is a smooth $K3$ surface. 
Let $C_1$, \dots, $C_r$, be the irreducible components of the exceptional locus
ordered, so that the intersection $(C_i,C_j)$ is a Cartan matrix of type $ADE$.
Choose a primitive class $\xi\in \Pic(S)$, such that 
$(\xi,[C_i])=0$, for all $i$. Let $v\in K(S)$
be the class of rank $0$, with $c_1(v)=\xi$, and $\chi(v)=1$.
Given a sheaf $F$,  
set $\chi_\omega(F):=\chi()$
For a generic $\omega$, the moduli space $M^\omega_H$
}

%
\section{Generalizations}
\label{sec-generalizations}
%
\subsection{Symplectic generalizations}
We thank Y. Namikawa for pointing out the following generalizations
of Lemma \ref{lemma-galois-cover},
allowing the singular symplectic $Y$ to be affine, or considering
only partial resolutions of $Y$.

\subsubsection{Partial resolutions}
Let $Y$ be a projective symplectic variety. 
A $\RationalNumbers$-factorial terminalization of $Y$ is a morphism 
$\pi:X\rightarrow Y$ from a projective $\RationalNumbers$-factorial 
symplectic variety $X$ with only terminal singularities, 
which is an isomorphism over the 
non-singular locus $Y_{reg}$ of $Y$.
$Y$ need not admit a symplectic resolution \cite{KLS}, but it always 
admits a $\RationalNumbers$-factorial terminalization, 
by \cite{namikawa-Q}, Remark 2, and 
recent results in the minimal model program \cite{BCHM}.
The Hodge structure of $H^2(X_{reg},\ComplexNumbers)$ is pure, 
any flat deformation of $X$ is locally trivial, i.e., 
it preserves all the singularities of $X$ \cite{namikawa-Q},
the Kuranishi space $Def(X)$ is smooth, and the Local Torelli Theorem
holds for $X$ \cite{namikawa-extension-of-2-forms}, Theorem 8.
The analogue of Theorem \ref{thm-namikawa} above for a symplectic
$\RationalNumbers$-factorial terminalization $\pi:X\rightarrow Y$
was proven in 
\cite{namikawa-Q}, Theorem 1. The analogue of Lemma \ref{lemma-galois-cover}
above extends, by exactly the same argument.
We expect Theorem \ref{thm-G-is-isomorphic-to-product-of-weyl-groups}
to generalize as well.

%
\subsubsection{Affine examples}
\label{sec-affine-examples}
Theorem \ref{thm-namikawa} was further extended by Namikawa to the 
the case where 
$\pi:X\rightarrow Y$
is a crepant resolution of an affine symplectic variety $Y$
with a good $\ComplexNumbers^*$-action and with a positively weighted 
Poisson structure 
(\cite{namikawa-poisson-deformations-of-affine}, Theorems 2.3 and  2.4). 
A $\ComplexNumbers^*$-action on an affine variety $Y$ is {\em good},
if there is a closed point $y_0\in Y$ fixed by the action,
and $\ComplexNumbers^*$ acts on the maximal ideal of $y_0$
with positive weights. 
Examples include Springer resolutions of the closure of a nilpotent orbit
of a complex simple Lie algebra. More generally, Nakajima's quiver varieties
fit into the above set-up \cite{nakajima-reflections}. 

The analogue of Lemma \ref{lemma-galois-cover}
above extends. 
Infinitesimal Poisson deformations of $X$ are governed by 
$H^2(X,\ComplexNumbers)$, by 
\cite{namikawa-flops-and-poisson-deformations}, Corollary 10. 
The only change to the proof of Lemma \ref{lemma-galois-cover}
is that we replace
the period domain by $H^2(X,\ComplexNumbers)$, on which the
monodromy group acts, and we do not need to consider any pairing on
$H^2(X,\ComplexNumbers)$. The Kuranishi Poisson deformation space
$PDef(X)$ is an open analytic neighborhood of $0$ in 
$H^2(X,\ComplexNumbers)$, and 
the period map is replaced by the inclusion
$PDef(X) \subset H^2(X,\ComplexNumbers)$.
The analogue of Theorem 
\ref{thm-G-is-isomorphic-to-product-of-weyl-groups} for  quiver varieties
follows from the results in \cite{nakajima-reflections}.

%
\subsection{Modular Galois covers in the absence of simultaneous resolutions}
\label{sec-absence-of-simultaneous-resolutions}
Let $Y$ be a normal compact complex variety,
$X$ a connected compact K\"{a}hler manifold, and $\pi:X\rightarrow Y$ a 
morphism, which is bimeromorphic. Assume that 
$R^1_{\pi_*}\StructureSheaf{X}=0$.
Then $\pi$ induces a morphism $f:Def(X)\rightarrow Def(Y)$ and
$\pi$ deforms as a morphism $\nu$ of the semi-universal families, 
fitting in a commutative diagram (\ref{diagram-f-general}),
by \cite{kollar-mori}, Proposition 11.4. 
Let $M\subset Def(Y)$ be the image of $f$.
We keep the notation of diagram (\ref{diagram-f-general}) and 
impose the following additional assumptions:
\begin{assumption}
\label{assumption-for-CY-crepant-resolution}
\begin{enumerate}
\item
\label{assumption-def-X-reduced-and-irred}
$Def(X)$ is smooth.
\item
\label{assumption-f-is-proper-and-finite}
The morphism $f$ is proper with finite fibers. 
\item
\label{assumption-local-torelli}
The Local Torelli Theorem holds for $Def(X)$ in the following sense.
There exists a positive integer $i$, such that the local system 
$R^i_{\psi_*}(\RationalNumbers)$ is a 
polarized\footnote{A canonical polarization exists, for example, if
$i$ is the middle cohomology, 
or if $i=2$ and $X$ is an irreducible holomorphic
symplectic manifold.  
} 
variation of Hodge structures, with period domain $\Omega$
and period\footnote{See the reference \cite{griffiths}
for the definitions of variations of Hodge structures 
and period maps.
} 
map $p:Def(X)\rightarrow \Omega$. Furthermore, 
the differential $dp_0$ at $0\in Def(X)$ is injective.
\item
\label{assumption-pull-back-of-quotient-local-system}
There exists a closed analytic proper subset 
$\Delta\subset M$, such that if we set $U:=M\setminus \Delta$
and $V:=Def(X)\setminus f^{-1}(\Delta)$, then 
the restriction 
$\restricted{\bar{\psi}}{U}:\restricted{\Y}{U}\rightarrow U$ 
is a topological fibration. Furthermore, for every $t\in U$ and 
for every two points $t_1,t_2$ in $V$,
satisfying $f(t_1)=f(t_2)=t$, the pull-back homomorphisms
$\nu_{t_j}^*:H^i(Y_t,\RationalNumbers)\rightarrow 
H^i(X_{t_j},\RationalNumbers)$,
$j=1,2$, are surjective and have the same kernel, which we may denote by $L_t$.
\end{enumerate}
\end{assumption}

Part \ref{assumption-pull-back-of-quotient-local-system} of the
assumption implies that the homomorphism of local systems
\[
\nu^* \ : \ 
f^*\left([R^i_{\bar{\psi}_*}\RationalNumbers\restricted{]}{U}\right)
 \ \ \ \longrightarrow \ \ \ 
R^i_{\psi_*}(\RationalNumbers\restricted{)}{V}
\]
is surjective and its kernel is the pullback $f^*L$ of a local subsystem 
$L\subset [R^i_{\bar{\psi}_*}\RationalNumbers\restricted{]}{U}$
over $U$. Consequently,
the local system $R^i_{\psi_*}(\RationalNumbers\restricted{)}{V}$
is isomorphic to the pullback of the quotient local system 
$[R^i_{\bar{\psi}_*}\RationalNumbers\restricted{]}{U}/L$ over $U$. 
This isomorphism is a generalization of the isomorphism
(\ref{eq-pull-back-by-nu-is-an-isomorphism}) used in the proof of 
Lemma \ref{lemma-galois-cover}.
The image $M$ of $f$ is a closed irreducible analytic subset of $Def(Y)$, by 
part \ref{assumption-f-is-proper-and-finite} of the assumption. 
We endow $M$ with the reduced structure sheaf, and let
$\widetilde{M}\rightarrow M$ be its normalization. 
The morphism $f$ factors through $\widetilde{M}$, since $Def(X)$
is smooth. 
The proof of Lemma
\ref{lemma-galois-cover} is easily seen to generalize yielding 
the following result.

\begin{lem} 
\label{lemma-generalized-galois-cover}
Under the above assumptions,
there exist neighborhoods  of $0$ in $Def(X)$ and of $\bar{0}$ in 
$M$, which we denote also by $Def(X)$ and $M$, and
a finite group $G$, acting faithfully on $Def(X)$, 
such that the morphism $f$ is the composition of the
quotient $Def(X)\rightarrow Def(X)/G$, an 
isomorphism $Def(X)/G\rightarrow \widetilde{M}$, 
and the normalization $\widetilde{M}\rightarrow M$.
The group $G$ acts on
$H^*(X,\Integers)$ via monodromy operators preserving the Hodge structure.
\end{lem}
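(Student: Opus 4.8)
The plan is to follow the structure of the proof of Lemma~\ref{lemma-galois-cover} verbatim, replacing the period domain $\Omega_\Lambda$ by the abstract period domain $\Omega$ of Assumption~\ref{assumption-for-CY-crepant-resolution}\eqref{assumption-local-torelli}, and replacing the role of the isomorphism~\eqref{eq-pull-back-by-nu-is-an-isomorphism} by the surjection $\nu^*:f^*\left([R^i_{\bar{\psi}_*}\RationalNumbers\restricted{]}{U}\right)\rightarrow R^i_{\psi_*}(\RationalNumbers\restricted{)}{V}$ whose kernel is the pull-back $f^*L$. First I would fix a base point $u\in U$ and form the monodromy representation ${\rm mon}:\pi_1(U,u)\rightarrow GL\bigl[H^i(Y_u,\RationalNumbers)/L_u\bigr]$ on the \emph{quotient} local system $[R^i_{\bar{\psi}_*}\RationalNumbers\restricted{]}{U}/L$, set $K:=\ker({\rm mon})$, and let $f_K:\widetilde{U}_K\rightarrow U$ be the associated Galois cover with group $G:=\pi_1(U,u)/K$. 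Since the quotient local system pulls back to the trivial local system $R^i_{\psi_*}(\RationalNumbers\restricted{)}{V}$ along $f:V\rightarrow U$ (its fiber over $0$ being canonically $H^i(X,\RationalNumbers)$, and $V$ being connected as $Def(X)$ is smooth hence irreducible), the covering $f:V\rightarrow U$ factors as $f=h\circ f_K$ through $h:V\rightarrow \widetilde{U}_K$. The period map $p$ restricted to $V$ factors through the trivialization of $R^i_{\psi_*}(\RationalNumbers\restricted{)}{V}$, hence through $h$; since $dp_0$ is injective, $p$ is a local embedding near $0$, so after shrinking we may assume $p|_V$ is injective, whence $h$ is injective, hence an isomorphism. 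Thus $V\cong \widetilde{U}_K$ is Galois over $U$ with group $G$.

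Next I would extend the $G$-action on $V$ to a $G$-action on $Def(X)$ exactly as in the proof of Lemma~\ref{lemma-galois-cover}. For $g\in G$, the monodromy representation on the quotient local system, pulled back and compared with the trivialization $\phi:=\varphi\circ\nu^*$, yields an automorphism $\gamma_g$ of the period domain $\Omega$ and the identity $p\circ g^{-1}=\gamma_g\circ p$ on $V$, by the same period-chasing computation (using that for $t\in V$ the fibers $X_{t_1}$ and $X_{t_2}$ over a common point of $U$ have isomorphic $H^{p,q}$ in degree $i$ via $\nu^*$, modulo $L$, which is all that is needed since $p(t)$ records the Hodge filtration of the quotient). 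One then checks, as in the Claim, that $p(Def(X))$ is the interior of its closure and is $\gamma_g$-invariant after shrinking $Def(X)$ and $M$, so $\alpha_g:=p^{-1}\circ\gamma_g\circ p$ is a well-defined automorphism of $Def(X)$ with $f\circ\alpha_g=f$; since the fiber of $f$ over $\bar 0$ is a single point, $\alpha_g(0)=0$. This produces the faithful $G$-action on $Def(X)$ with $f=f\circ\alpha_g$ for all $g$.

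It remains to identify $f$ with the composition $Def(X)\rightarrow Def(X)/G\xrightarrow{\sim}\widetilde{M}\rightarrow M$. The orbit map $Def(X)\rightarrow Def(X)/G$ and $f$ agree on $V$ up to the Galois isomorphism $V/G\cong U$, and both target spaces are normal: $Def(X)/G$ is normal as a quotient of a smooth space by a finite group, and $\widetilde M$ is the normalization of $M$, through which $f$ factors because $Def(X)$ is smooth. Since the induced map $Def(X)/G\rightarrow\widetilde M$ is finite, birational (an isomorphism over $U$), and $\widetilde M$ is normal, it is an isomorphism by Zariski's Main Theorem; this gives the stated factorization of $f$. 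Finally, the monodromy statement: running the same argument with the full local system $R^*_{\psi_*}(\Integers)$ and the trivialization $\tilde\varphi$ inducing the identity on $H^*(X,\Integers)$, the element $\tilde\varphi^{-1}_{g^{-1}(t)}\tilde\gamma_g\tilde\varphi_t$ is induced by the composite isomorphism $X_{g^{-1}(t)}\xrightarrow{\nu_{g^{-1}(t)}}Y_{f(t)}\xrightarrow{\nu_t^{-1}}X_t$ for $t\in V$, hence is a Hodge isometry; letting $t\to 0$ and using $\alpha_g(0)=0$ and $\tilde\varphi_0={\rm id}$, the limiting operator on $H^*(X,\Integers)$ preserves the Hodge structure. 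I expect the main obstacle to be the bookkeeping in Assumption~\ref{assumption-for-CY-crepant-resolution}\eqref{assumption-pull-back-of-quotient-local-system}: one must verify carefully that the two kernels $L_{t_1}, L_{t_2}$ genuinely coincide and glue into a local \emph{subsystem} $L$ over all of $U$ (not just along $V$), so that the quotient local system is well-defined and the monodromy representation on it makes sense --- this is where the precise hypothesis on equal kernels is doing the essential work, and it replaces the clean isomorphism~\eqref{eq-pull-back-by-nu-is-an-isomorphism} available in the symplectic-resolution case.
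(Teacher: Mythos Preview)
Your approach is exactly the paper's: the paper's entire proof is the sentence ``the proof of Lemma~\ref{lemma-galois-cover} is easily seen to generalize,'' and you have written out that generalization in detail. The construction of $G$ via the monodromy of the quotient local system, the factorization $V\to\widetilde U_K$ and its identification with $V$ via local Torelli, the extension of the $G$-action from $V$ to $Def(X)$ via $\alpha_g=p^{-1}\circ\gamma_g\circ p$, and the factorization through the normalization $\widetilde M$ are all correct and fill in what the paper leaves implicit.

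There is one genuine slip in your final paragraph. You write that the monodromy operator on $H^*(X,\Integers)$ is ``induced by the composite isomorphism $X_{g^{-1}(t)}\xrightarrow{\nu_{g^{-1}(t)}}Y_{f(t)}\xrightarrow{\nu_t^{-1}}X_t$ for $t\in V$.'' But in this generalized setting $\nu_t$ is \emph{not} an isomorphism: Assumption~\ref{assumption-for-CY-crepant-resolution}\eqref{assumption-pull-back-of-quotient-local-system} only says that $\nu_t^*$ is surjective on $H^i(\,\cdot\,,\RationalNumbers)$ with kernel $L_t$---in the Calabi-Yau threefold application, for instance, $\nu_t$ is merely a small resolution of a variety with ordinary double points. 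So the symbol $\nu_t^{-1}$ is meaningless, and the clean geometric transport you used in Lemma~\ref{lemma-galois-cover} (where diagram~\eqref{diagram-f-general} was cartesian over $U$) is unavailable here. What the setup does give you directly is the monodromy action of $G=\pi_1(U,u)/K$ on the fiber $H^i(Y_u,\RationalNumbers)/L_u\cong H^i(X,\RationalNumbers)$ of the quotient local system; this is a polarized VHS, and $\gamma_g$ fixes the period point $p(0)$, so the action preserves the Hodge structure in degree $i$. Your argument for the full $H^*(X,\Integers)$ via a geometric isomorphism does not go through as written; if you want that stronger statement you need a different mechanism (and the paper's one-line proof does not supply one either).
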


%
\subsection{Calabi-Yau threefolds with a curve of ADE singularities}
We provide examples of the set-up of section
\ref{sec-absence-of-simultaneous-resolutions}. 
Let $\pi:X\rightarrow Y$ be a crepant resolution, by a Calabi-Yau 
threefold $X$, of a normal projective variety $Y$ with a uniform
ADE singularity along a smooth curve $C$ of genus $g\geq 1$. 
The morphism $\pi$ is assumed to restrict to an isomorphism over 
$Y\setminus C$.

\begin{lem}
\label{lemma-CY-threefolds}
Assume\footnote
{The assumption is automatically satisfied if $g=1$, since
$Y_t$ is in fact smooth. In the $g=1$ case 
$Y$ is deformation equivalent to $X$, and $Def(X)$ and $Def(Y)$
have the same dimension, by \cite{wilson}, Proposition 4.4 and 
the erratum of \cite{wilson}. 
In that case, $M=Def(Y)$.
If $g>1$, then $Y$ is not deformation equivalent to $X$,
and the expected dimension of $Def(Y)$ is larger.
The variety $Y_{t}$ would thus be singular, if $g>1$.
} 
that for a generic $t\in M$, $Y_t$ is normal with only isolated singular 
points, and $\nu_t:X_t\rightarrow Y_t$ is a small resolution.
Then the conclusion of Lemma
\ref{lemma-generalized-galois-cover} holds.
\end{lem}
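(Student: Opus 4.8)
The plan is to verify that the set-up of Lemma \ref{lemma-generalized-galois-cover} applies, i.e.\ that Assumption \ref{assumption-for-CY-crepant-resolution} holds with $i=3$, and then to invoke that lemma directly. Since $X$ is a Calabi-Yau threefold, $R^1_{\pi_*}\StructureSheaf{X}$ vanishes because $\pi$ is a crepant resolution (the higher direct images vanish as $Y$ has rational, in fact canonical, Gorenstein singularities), so the morphism $f:Def(X)\rightarrow Def(Y)$ and the lifted morphism $\nu$ of semi-universal families exist by \cite{kollar-mori}, Proposition 11.4. The smoothness of $Def(X)$ (part \ref{assumption-def-X-reduced-and-irred}) is the Bogomolov--Tian--Todorov theorem for Calabi-Yau manifolds. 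The properness of $f$ with finite fibers (part \ref{assumption-f-is-proper-and-finite}) follows from the fact that $\nu$ is bimeromorphic and crepant, so it contracts no deformation direction transverse to the contracted curve; here one uses Wilson's analysis \cite{wilson} of how the exceptional curves of $\pi$ deform, together with the smoothing hypothesis in the genus one footnote case. The Local Torelli part \ref{assumption-local-torelli} holds with $i=3$: $R^3_{\psi_*}(\RationalNumbers)$ carries the canonical polarization of a weight-three variation of Hodge structures, the period domain $\Omega$ is the usual Griffiths domain, and injectivity of $dp_0$ is exactly the infinitesimal Torelli theorem for Calabi-Yau threefolds (contraction with the holomorphic three-form identifies $H^1(X,T_X)$ with $H^{2,1}(X)$, and the differential of the period map is this identification composed with the inclusion into $H^3$).

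The substantive point is part \ref{assumption-pull-back-of-quotient-local-system}. First I would take $\Delta\subset M$ to be the locus where $Y_t$ fails to be normal with isolated singularities, or where $\nu_t$ fails to be small; by hypothesis this is a proper closed analytic subset, and over its complement $U$ the family $\restricted{\Y}{U}\rightarrow U$ is a topological fibration (each $Y_t$, $t\in U$, is a projective threefold with finitely many ODP-type singularities, and these vary equisingularly after shrinking). Next, for $t\in U$ and $t_j\in V$ with $f(t_j)=t$, the map $\nu_{t_j}:X_{t_j}\rightarrow Y_t$ is a small resolution, so by the Leray spectral sequence for a small contraction, $\nu_{t_j}^*:H^3(Y_t,\RationalNumbers)\rightarrow H^3(X_{t_j},\RationalNumbers)$ is surjective with kernel the span of the vanishing cycles of the contracted rational curves; concretely the kernel $L_t$ is the orthogonal complement of the image, and it is identified with the primitive classes supported on the exceptional curves. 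The claim that the kernel is the same for $t_1$ and $t_2$ is the key: both $X_{t_1}$ and $X_{t_2}$ are small resolutions of the \emph{same} $Y_t$, hence they are related by a flop along the (finitely many) exceptional curves, and the flop acts trivially on $H^3$ of the contracted locus and on the kernel $L_t$ (a flop of a threefold induces an isomorphism on $H^3$ preserving the Hodge structure and fixing exactly the classes pulled back from $Y_t$). This is where Wilson's work on classifying the contractions of Calabi-Yau threefolds and the behaviour of $H^3$ under flops is used.

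With Assumption \ref{assumption-for-CY-crepant-resolution} verified, the conclusion follows: Lemma \ref{lemma-generalized-galois-cover} gives neighborhoods of $0$ in $Def(X)$ and of $\bar 0$ in $M$, and a finite group $G$ acting faithfully on $Def(X)$, such that $f$ factors as the quotient $Def(X)\rightarrow Def(X)/G$ followed by an isomorphism onto the normalization $\widetilde M$ of $M$ and the normalization map $\widetilde M\rightarrow M$; moreover $G$ acts on $H^*(X,\Integers)$ by Hodge-structure-preserving monodromy operators. I expect the main obstacle to be the verification that $L_t$ is independent of the choice of small resolution $X_{t_j}$ of $Y_t$ — i.e.\ the invariance of the kernel under flops — since this is precisely the geometric input that makes the generic fiber of $f$ into a torsor under a monodromy group, and it requires knowing that all small resolutions of $Y_t$ are connected by a finite chain of flops and that each flop is compatible with the local Torelli data; the rest is bookkeeping on the topological fibration $\restricted{\Y}{U}$ and an application of the already-established Lemma \ref{lemma-generalized-galois-cover}.
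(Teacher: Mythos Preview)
Your overall strategy matches the paper's: verify Assumption~\ref{assumption-for-CY-crepant-resolution} with $i=3$ and invoke Lemma~\ref{lemma-generalized-galois-cover}. Conditions~\ref{assumption-def-X-reduced-and-irred} and~\ref{assumption-local-torelli} are handled exactly as the paper does. However, two of your verifications diverge from the paper and contain genuine weaknesses.

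For condition~\ref{assumption-f-is-proper-and-finite}, your appeal to Wilson and the vague phrase ``contracts no deformation direction transverse to the contracted curve'' does not establish finiteness of the fibers of $f$. The paper instead invokes the same argument Namikawa uses to prove Theorem~\ref{thm-namikawa} (specifically \cite{namikawa-deformations}, Claim~2 in the proof of Theorem~2.2): the key input is the \emph{uniqueness} of the crepant resolution of $ADE$-singularities, which forces any two points of $Def(X)$ mapping to the same point of $Def(Y)$ to parametrize isomorphic resolutions.

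For the kernel equality in condition~\ref{assumption-pull-back-of-quotient-local-system}, your flop argument is imprecise: the kernel $L_t$ lives in $H^3(Y_t,\RationalNumbers)$, not in $H^3(X_{t_j})$, so speaking of ``vanishing cycles of the contracted rational curves'' or ``the flop acting trivially on $H^3$ of the contracted locus'' conflates objects on different spaces; you also assume the singularities are ordinary double points, which is not part of the hypothesis. The paper avoids flops entirely and gives a direct argument: take the closure $\overline{X}\subset X_{s_1}\times X_{s_2}$ of the graph of the birational map $X_{s_1}\dashrightarrow X_{s_2}$, choose a resolution $\tilde{X}\to\overline{X}$ with projections $p_j:\tilde{X}\to X_{s_j}$, and observe that $\nu_{s_1}\circ p_1=\nu_{s_2}\circ p_2=:\mu$. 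The projection formula shows each $p_j^*$ is injective on $H^3$, whence $\ker(\nu_{s_j}^*)=\ker(\mu^*)$ for both $j$. This is shorter, requires no classification of small resolutions, and works for arbitrary isolated singularities admitting a small resolution. The surjectivity of $\nu_s^*$ is likewise proved by a Leray spectral sequence computation for an arbitrary small resolution, not just the ODP case.
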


\begin{proof}
\footnote{I thank T. Pantev for explaining to me the results of
\cite{donagi-pantev-ADE}, leading to the formulation of this Lemma.}
We verify the conditions of Assumption
\ref{assumption-for-CY-crepant-resolution}.
$Def(X)$ is smooth, by the Bogomolov-Tian-Todorov Theorem
\cite{Bo,Ti,To}. 
Condition \ref{assumption-def-X-reduced-and-irred} follows.
The Local Torelli Condition 
\ref{assumption-local-torelli} (with $i=\dim_\ComplexNumbers(X)$) 
is well known for K\"{a}hler manifolds
with trivial canonical bundle.
Condition \ref{assumption-f-is-proper-and-finite} follows from the
assumption that the singularities along $C$ are of $ADE$-type, by the 
same argument used in the proof of Theorem 
\ref{thm-namikawa}
(see \cite{namikawa-deformations}, Claim 2 in the proof of Theorem 2.2).
The key is the uniqueness of the crepant resolution of $ADE$-singularities.

It remains to prove 
Condition \ref{assumption-pull-back-of-quotient-local-system}.
We endow $M$ with the reduced structure sheaf, 
so that $M$ is reduced and irreducible.
Let $\Delta\subset M$ be the union of the singular locus of $M$,
the branch locus of $f$, the locus where
$Y_t$ has singularities, which are not isolated singular points, and
the locus where $Y_t$ has more than the number of 
isolated singular points of $Y_{t'}$, of a given topological type,
for a generic $t'\in M$. 
Set $U:=M\setminus \Delta$. 
$U$ is non-empty, by assumption, open, and 
$\Y$ clearly restricts to a topological fibration over $U$. 
Set $V:=f^{-1}(U)$. 
Fix $s\in V$ and set $t:=f(s)$. 
The surjectivity of 
$\nu_s^*:H^3(Y_t,\Integers)\rightarrow H^3(X_s,\Integers)$ follows from the 
following general fact.

\begin{claim}
Let $N$ be a three dimensional normal complex variety, smooth away from
a finite set of isolated singular points, admitting  a small resolution 
$\phi:M\rightarrow N$.
Then $\phi^*:H^3(N,\Integers)\rightarrow H^3(M,\Integers)$ 
is surjective. 
\end{claim}

\begin{proof}\footnote{I thank Y. Namikawa for kindly providing this proof.}
Associated to $\phi$ 
we have the canonical Leray filtration $L$ on $H^q(M,\Integers)$, 
and the Leray spectral sequence $E_r^{p,q}$ converging to 
$H^{p+q}(M,\Integers)$, with
$E_2^{p,q}=H^p(N,R_{\phi_*}^q\Integers)$ and 
$E_\infty^{p,q}=Gr_L^pH^{p+q}(M,\Integers)$
(see \cite{voisin-book-vol2}, Theorem 4.11).

Denote by $d_r^{p,q}:E_r^{p,q}\rightarrow E_r^{p+r,q-r+1}$ the
differential at the $r$ term. 
Note the equality $E_2^{3,0}=H^3(N,\Integers)$,
since $N$ is normal and thus the fibers of $\phi$ are connected. 
We have a surjective homomorphism 
\[
H^3(N,\Integers)=E_2^{3,0}\rightarrow E_\infty^{3,0}=Gr_L^3H^3(M,\Integers),
\]
since $d_r^{3,0}:E_r^{3,0}\rightarrow E_r^{3+r,1-r}$
vanishes for $r\geq 2$.
Hence, it suffices to prove that $E^{p,q}_\infty=Gr^p_LH^3(M,\Integers)$
vanishes, for $p\neq 3$. 
Now $E^{p,q}_\infty$ is a sub-quotient of
$E^{p,q}_2$, so it suffices to prove the vanishing of 
$E^{2,1}_2=H^2(N,R^1_{\phi_*}\Integers)$, 
$E^{1,2}_2=H^1(N,R^2_{\phi_*}\Integers)$, and 
$E^{0,3}_2=H^0(N,R^3_{\phi_*}\Integers)$.
The first two vanish, since for $q>0$ the sheaf 
$R^q_{\phi_*}\Integers$ is supported on the zero dimensional set of isolated
singular points, and so $H^p(Y,R^q_{\phi_*}\Integers)$ vanishes, for $p>0$.
$E^{0,3}_2$ vanishes, since the sheaf $R^3_{\phi_*}\Integers$
vanishes, because the fibers of $\phi$ have real dimension at most $2$.
\end{proof}

Fix $t\in U$ and 
let $s_1$, $s_2$ be two points of $V$ with $f(s_1)=f(s_2)=t$.
We prove\footnote{I thank B. Hassett for kindly providing this argument.} 
next that $\ker(\nu_{s_1}^*)=\ker(\nu_{s_2}^*)$.
Let $\overline{X}$ be the closure 
in $X_{s_1}\times X_{s_2}$ of the graph of the birational isomorphism
$X_{s_1} \rightarrow Y_t\leftarrow X_{s_2}$. 
Choose a resolution of 
singularities $\tilde{X}\rightarrow \overline{X}$.
Let $p_i:\tilde{X}\rightarrow X_{s_i}$ be the projection.
We have the equality $\nu_{s_1}\circ p_1=\nu_{s_2}\circ p_2$. 
Set $\mu:=\nu_{s_i}\circ p_i$. 
Let $[\tilde{X}]\in H_6(\tilde{X},\RationalNumbers)$ be the fundamental class.
The homomorphisms 
$p_i^*:H^3(X_{s_i},\RationalNumbers)\rightarrow 
H^3(\tilde{X},\RationalNumbers)$ satisfy
\[
p_{i_*}\left([\tilde{X}]\cap p_i^*\alpha\right)=
p_{i_*}([\tilde{X}])\cap\alpha=
[X_{s_i}]\cap\alpha,
\] for all $\alpha\in H^3(X_{s_i},\RationalNumbers)$,
by the projection formula.
The homomorphisms $p_i^*$ are thus injective, since the cap product 
$[X_{s_i}]\cap:H^3(X_{s_i},\RationalNumbers)\rightarrow 
H_3(X_{s_i},\RationalNumbers)$ is an isomorphism, by Poincare Duality.
Hence, $\ker(\nu_{s_i}^*)=\ker(\mu^*)$, $i=1,2$.
This completes the proof of Lemma
\ref{lemma-CY-threefolds}.
\end{proof}

\begin{example}
The following simple example is discusses in \cite{donagi-pantev-A1}, section 
3.2, in connection with large $N$ duality for Dijkgraaf-Vafa geometric
transitions in string theory.
Consider the complete intersection of a quadric Q and a quartic F hypersurfaces
in $\PP^5$. Let $\widetilde{Q}$ be the quadratic form with zero locus $Q$.
If $\widetilde{Q}$ has rank $3$, so that $Q$ has an $A_1$-singularity along a
plane $P$, then for a generic $F$, the complete intersection
$Y:=Q\cap F$ has an $A_1$-singularity along the smooth 
plane quartic $C:=P\cap F$. The blow-up $\pi:X\rightarrow Y$,
of $Y$ along $C$, yields a Calabi-Yau threefold $X$.
If the quadratic form $\widetilde{Q}_t$ has rank $4$, so that the quadric $Q_t$
is singular along a line $\ell\subset Q$, then for a generic $F$, 
the complete intersection $Y_t:=Q_t\cap F$ has four ordinary double points
along $\ell\cap F$. The proper transforms of $Y_t$, 
in each of the two small resolutions of $Q_t$, is a crepant resolution 
$X_{s_i}\rightarrow Y_t$, $i=1,2$. 
Both $X_{s_i}$ are deformation equivalent to $X$.
The two corresponding points $s_i\in Def(X)$, $i=1,2$, 
are interchanged by the Galois involution
of $f:Def(X)\rightarrow M$.
In this case $Def(X)$ and $M$ are $86$-dimensional, while
$Def(Y)$ is $89$-dimensional. For a generic pair of
smooth quadric $Q_t$ and quartic $F$, the complete intersection
$Q\cap F$ is a smooth Calabi-Yau threefold, which is 
deformation equivalent to $Y$ but not to $X$.
\end{example}

Additional examples of contractions, of the type considered in 
Lemma \ref{lemma-CY-threefolds}, with $A_n$ singularities along 
a curve, are provided in \cite{katz-morrison-plesser}.
An example of type $C_2$ singularities along 
a curve is provided in 
\cite{szendroi-enhanced}, Example 8. 
Szendr\Hun{o}i states Lemma 
\ref{lemma-CY-threefolds}, as well as the analogue of Theorem 
\ref{thm-G-is-isomorphic-to-product-of-weyl-groups} in this set-up,
as a meta-principle in \cite{szendroi-enhanced}, Proposition 6. 
He proceeds to prove that proposition 
in a beautiful family of examples, where $X$ is a rank two vector 
bundle over $C$
and $Y$ is its quotient by a finite group of 
vector bundle automorphisms, which acts on each fiber as
a subgroup of $SL(2)$ 
(see \cite{szendroi-artin-group}, Proposition 2.9 part iii).
Szendr\Hun{o}i's main emphasis is the construction of a 
braid group action on the level of derived categories,
which he carries out in general
\cite{szendroi-enhanced,szendroi-artin-group}.



\end{document}